\documentclass[twoside]{article}

% For camera-ready.
% This option will print headings for the title of your paper and
% headings for the authors names, plus a copyright note at the end of
% the first column of the first page.
\usepackage[accepted]{aistats2019}

% !TEX root=./main.tex

\usepackage[dvipsnames]{xcolor}
\usepackage[mathscr]{eucal}
\usepackage[noend]{algpseudocode}
\usepackage[subrefformat=parens]{subcaption}

\usepackage{algorithm}
\usepackage{amsfonts}
\usepackage{amsmath}
\usepackage{amssymb}
\usepackage{amsthm}
\usepackage{afterpage}
\usepackage{bm}
\usepackage{booktabs}
\usepackage{cancel}
\usepackage{comment}
\usepackage{fancyhdr}
\usepackage{grffile}
\usepackage{graphicx}
\usepackage{mathtools}
\usepackage{pdflscape}
\usepackage{tikz}
\usepackage{xifthen}
\usepackage{microtype}

% Add line numbers to the left-hand side column.
\usepackage{color}
% \usepackage{vruler-mod}
% \definecolor{LightGray}{rgb}{0.65,0.65,0.65}
% \setvruler[10pt][1][1][4][1][10pt][10pt][20pt][.99\textheight]

% \input{linenos}

% If using a bibliography style which is not compatible with author-year then
% 1. make sure to use the the `numbers' option
% 2. renew citet to be citep (the latter requries an author-year style).
\usepackage[sort,colon,numbers]{natbib}
\bibliographystyle{plainnat}
\renewcommand{\citet}{\citep}

% Load hyperref last for some reason, although perhaps float>hyperref>algorithm.
% http://www.shawnlankton.com/2009/04/hyperref-graphicx-algorithm/
% https://tex.stackexchange.com/questions/1863/which-packages-should-be-loaded-after-hyperref-instead-of-before
\usepackage{hyperref}
\hypersetup{
  citecolor=blue!50!black,
  colorlinks=true,
  linkcolor=red!50!black,
  urlcolor=magenta,
}

% TikZ libraries.
\usetikzlibrary{bayesnet}
\usetikzlibrary{calc}
\usetikzlibrary{decorations.pathreplacing}
\usetikzlibrary{patterns}
\usetikzlibrary{positioning}
\usetikzlibrary{matrix}

% Set up the caption spacing.
\captionsetup{font=footnotesize}
\captionsetup[subfigure]{font=scriptsize}

% Theorems, Lemmas, etc.
\theoremstyle{plain}
\newtheorem{theorem}{Theorem}[section]
\newtheorem{lemma}[theorem]{Lemma}

\newtheorem{corollary}[theorem]{Corollary}
\newtheorem{proposition}[theorem]{Proposition}

\theoremstyle{definition}
\newtheorem{definition}[theorem]{Definition}
\newtheorem{problem}[theorem]{Problem}
\newtheorem{example}[theorem]{Example}
\newtheorem{remark}[theorem]{Remark}

% Rename Require/Ensure to Input/Output.
% see e.g. https://en.wikibooks.org/wiki/LaTeX/Algorithms

% Mathematics commands.
\newcommand{\simiid}{\sim^{\mathrm{iid}}}

\newcommand{\dom}{\mathcal{T}}

\newcommand{\bp}{\mathbf{p}}
\newcommand{\bq}{\mathbf{q}}
\newcommand{\bh}{\mathbf{h}}

\newcommand{\tbp}{\tilde\bp}
\newcommand{\tbq}{\tilde\bq}

\newcommand{\abs}[1]{\lvert{#1}\rvert}
\newcommand{\card}[1]{\lvert{#1}\rvert}
\newcommand{\set}[1]{\left\lbrace{#1}\right\rbrace}

\newcommand{\EQ}{\textsf{EQ}}
\newcommand{\GT}{\textsf{GT}}
\newcommand{\LT}{\textsf{LT}}

\newcommand{\reject}{\textsf{reject}}
\newcommand{\notreject}{\textsf{not reject}}

\newcommand{\Hnull}{\mathsf{H}_0}
\newcommand{\Halt}{\mathsf{H}_1}

\newcommand{\pitmanyor}{\mathsf{CRP}}
\newcommand{\binompdf}[3]{
  \binom{#1}{#2}
  \left[#3\right]^{#1}
  \left[1-#3\right]^{#2-#1}}

\newcommand{\Rationals}{\mathbb{Q}}

\newcommand{\expect}[2][]{%
\ifthenelse{\isempty{#1}}
  {\mathbb{E}\left[{#2}\right]}
  {\mathbb{E}_{#1}\left[{#2}\right]}
}
\newcommand{\Prob}[2][]{#1{\Pr}\left\{#2\right\}}

\newcommand{\indicator}[1]{\mathbb{I}\left[{#1}\right]}
\newcommand{\indicatornm}[1]{\mathbb{I}[{#1}]}

\newcommand{\plt}{\prec} % "prec less than"
\newcommand{\pgt}{\succ} % "prec greater than"
 % "prec less than or equal"
 % "prec greater than or equal"
\newcommand{\vlt}{\vartriangleleft} % "var less than"
\newcommand{\vgt}{\vartriangleright} % "var greater than"
\newcommand{\dictorder}{\vlt_{\mathrm d}} % "dict order"

\newcommand{\Naturals}{\mathbb{N}}
\newcommand{\defas}{\coloneqq}
\newcommand{\br}{\mathbf{r}}
\newcommand{\tbr}{\tilde{\br}}
\newcommand{\given}{\,|\,}
\newcommand{\defn}[1]{{\bf{#1}}}

\allowdisplaybreaks

\begin{document}

% If your paper is accepted and the title of your paper is very long,
% the style will print as headings an error message. Use the following
% command to supply a shorter title of your paper so that it can be
% used as headings.
%
\runningtitle{A Family of Exact Goodness-of-Fit Tests for
High-Dimensional Discrete Distributions}

% If your paper is accepted and the number of authors is large, the
% style will print as headings an error message. Use the following
% command to supply a shorter version of the authors names so that
% they can be used as headings (for example, use only the surnames)
%
\runningauthor{Saad, Freer, Ackerman, and Mansinghka}

\twocolumn[
  \aistatstitle{A Family of Exact Goodness-of-Fit Tests
    \\ for High-Dimensional Discrete Distributions}

  \aistatsauthor{
    Feras A.~Saad \And
    Cameron E.~Freer \And
    Nathanael L.~Ackerman \And
    Vikash K.~Mansinghka
  }
  \aistatsaddress{
    MIT \And
    MIT \And
    Harvard University \And
    MIT
  }
]

%!TEX root=./main.tex

\begin{abstract}
The objective of goodness-of-fit testing is to assess whether a
dataset of observations is likely to have been drawn from a candidate probability
distribution.
This paper presents a rank-based family of goodness-of-fit tests that is specialized
to discrete distributions on high-dimensional domains.
The test is readily implemented using a simulation-based, linear-time
procedure.
The testing procedure can be customized by the practitioner using knowledge of
the underlying data domain.
Unlike most existing test statistics, the proposed test statistic is
distribution-free and its exact (non-asymptotic) sampling distribution is
known in closed form.
We establish consistency of the test against all alternatives by showing that the test statistic is
distributed as a discrete uniform if and only if the samples were drawn from the
candidate distribution.
We illustrate its efficacy for assessing the sample quality of approximate
sampling algorithms over combinatorially large spaces with intractable
probabilities, including random partitions in Dirichlet process mixture models
and random lattices in Ising models.
\end{abstract}

% !TEX root=./main.tex

\section{Introduction}
\label{sec:introduction}

We address the problem of testing whether a dataset of observed samples was
drawn from a candidate probability distribution. This problem, known as
goodness-of-fit testing, is of fundamental interest and has applications in a
variety of fields including Bayesian statistics~\citep{gelman1996,talts2018},
high-energy physics~\citep{williams2010}, astronomy~\citep{peacock1983}, genetic
association studies~\citep{lewsi2012}, and psychometrics~\cite{andersen1973}.

Rank-based methods are a popular approach for assessing goodness-of-fit
and have received great attention in the nonparametric statistics
literature~\citep{erich1975nonparametrics}.
However, the majority of existing rank-based tests operate under the assumption
of continuous distributions~\citep[VI.8]{lehmann2005} and analogous methods for
discrete distributions that are theoretically rigorous, customizable using
domain knowledge, and practical to implement in a variety of settings
remain much less explored.

This paper presents a new connection between rank-based tests and discrete
distributions on high-dimensional data structures. By algorithmically specifying
an ordering on the data domain, the practitioner can quantitatively assess how
typical the observed samples are with respect to resampled data from the
candidate distribution. This ordering is leveraged by the test to effectively
surface distributional differences.

More specifically, we propose to test whether observations
$\set{y_1,\dots,y_n}$, taking values in a countable set $\dom$, were drawn from
a given discrete distribution $\bp$ on the basis of the rank of each $y_i$ with
respect to $m$ i.i.d.\ samples $\set{x_1,\dots,x_m}$ from $\bp$. If $y_i$ was
drawn from $\bp$ then we expect its rank to be uniformly distributed over
$\set{0,1,\dots,m}$. When the ranks show a deviation from uniformity, it is
unlikely that the $y_i$ were drawn from $\bp$. A key step is to use continuous
random variables to break any ties when computing the ranks. We call this
statistic the Stochastic Rank Statistic (SRS), which has several desirable
properties for goodness-of-fit testing:

\begin{enumerate}
\item The SRS is distribution-free: its sampling distribution under the
null does not depend on $\bp$. There is no need to construct
ad-hoc tables or use Monte Carlo simulation to estimate rejection regions.

\item The exact (non-asymptotic) sampling distribution of the SRS is a discrete
uniform. This exactness obviates the need to apply asymptotic approximations in
small-sample and sparse regimes.

\item The test is consistent against all alternatives. We show that the SRS is
distributed as a discrete uniform if and only if $\set{y_1,\dots,y_n}
\simiid \bp$.

\item The test gives the practitioner flexibility in deciding the set of
properties on which the observations be checked to agree with samples from
$\bp$. This flexibility arises from the design of the ordering on the
domain that is used to compute the ranks.

\item The test is readily implemented using a procedure that is linear-time in
the number of observations. The test is simulation-based and does not require
explicitly computing $\bp(x)$, which is especially useful for distributions with
intractable probabilities.
\end{enumerate}

While the test is consistent for any ordering $(\dom, \prec)$ over the domain
that is used to compute the SRS, the power of the test depends heavily on the
choice of $\prec$. We show how to construct orderings in a variety of domains by
(i) defining procedures that traverse and compare discrete data structures; (ii)
composing probe statistics that summarize key numerical characteristics; and
(iii) using randomization to generate arbitrary orderings.

The remainder of the paper is organized as follows.
Section~\ref{sec:goodness-of-fit-problem} reviews the goodness-of-fit problem
and discusses related work.
Section~\ref{sec:srs-statistic} presents the proposed test and
several theoretical properties.
Section~\ref{sec:example} gives conceptual examples for distributions over
integers, binary strings, and partitions.
Section~\ref{sec:applications} applies the method to (i) compare approximate
Bayesian inference algorithms over mixture assignments in a Dirichlet process
mixture model and (ii) assess the sample quality of random lattices from approximate
samplers for the Ising model.

%!TEX root=../main.tex

\begin{figure*}[ht]
% \centering
\begin{tikzpicture}

\node[
  rectangle,
  draw = none,
  align = center,
  inner sep = 0pt,
] (outcome-space)  at (0,0) {
  \includegraphics[width=.125\linewidth]{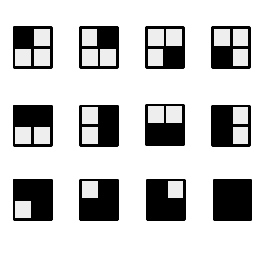}
};

\node [
  rectangle,
  draw = none,
  align = center,
  text width=3cm,
  inner sep = 0pt,
  above = .1 of outcome-space,
] (outcome-space-label){
  % https://tex.stackexchange.com/questions/223065/how-to-increase-line-spacing-in-tikz-node
  \baselineskip=0pt\footnotesize
  observation domain \\ (finite/countable) \par
};

\node[
  rectangle,
  draw = black,
  align = center,
  below = 0.5 of outcome-space,
  text width = .5cm,
  minimum height = .75cm,
  % label={right:$\simiid$},
] (dist-q) {
  $\bq$
};

\node [
  rectangle,
  draw = none,
  align = center,
  text width=3cm,
  inner sep = 0pt,
  below = .1 of dist-q.south,
  anchor = north,
] (dist-q-label) {
  \baselineskip=0pt\footnotesize
  unknown \\ distribution \par
};

\matrix[
  matrix of nodes,
  inner sep=2pt,
  % outer sep=0pt,
  % column sep=0pt,
  nodes in empty cells,
  right = 1. of dist-q.east,
  anchor = west,
  nodes={rectangle, text centered, text width=1em, align=center, text height=2.5ex},
] (samples) {
  \includegraphics[width=\textwidth]{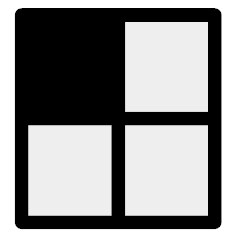}
  & \raisebox{3pt}{\tiny{vs}}
  & \includegraphics[width=\textwidth]{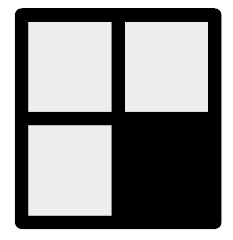}
  & \includegraphics[width=\textwidth]{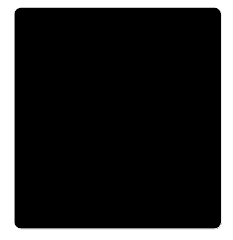}
  & $\dotsm$
  & \includegraphics[width=\textwidth]{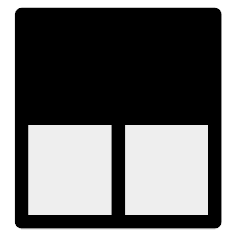}
  \\
  \includegraphics[width=\textwidth]{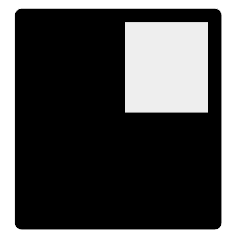}
  & \raisebox{3pt}{\tiny{vs}}
  & \includegraphics[width=\textwidth]{assets/schematic/gof-outcomes-icon-12.png}
  & \includegraphics[width=\textwidth]{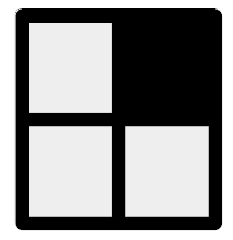}
  & $\dotsm$
  & \includegraphics[width=\textwidth]{assets/schematic/gof-outcomes-icon-2.png}
  \\
  \vdots & & \vdots & \vdots &  & \vdots\\
  \includegraphics[width=\textwidth]{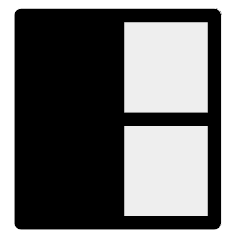}
  & \raisebox{3pt}{\tiny{vs}}
  & \includegraphics[width=\textwidth]{assets/schematic/gof-outcomes-icon-12.png}
  & \includegraphics[width=\textwidth]{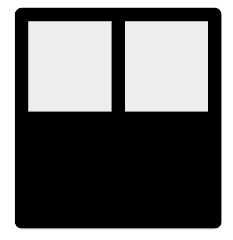}
  & $\dotsm$
  & \includegraphics[width=\textwidth]{assets/schematic/gof-outcomes-icon-3.png}
  \\
};

\node[inner sep=0, left=0 of samples-1-1] (y1) {\scriptsize $y_1$};
\node[inner sep=0, left=0 of samples-2-1] (y2) {\scriptsize $y_2$};
\node[inner sep=0, left=0 of samples-4-1] (yn) {\scriptsize $y_n$};

\node[above=0 of samples-1-3] (x1) {\scriptsize $X_1$};
\node[above=0 of samples-1-4] (x2) {\scriptsize $X_2$};
\node[above=0 of samples-1-6] (xn) {\scriptsize $X_m$};

\node[
  rectangle,
  draw = black,
  align = center,
  text width = .5cm,
  minimum height = .75cm,
  % label={below:{$\simiid$}},
] (dist-p) at (outcome-space.25 -| samples-1-5.north west) {
  $\bp$
};

\node [
  rectangle,
  draw = none,
  align = center,
  text width=3cm,
  inner sep = 0pt,
] (dist-p-label) at (outcome-space-label -| dist-p) {
  \baselineskip=0pt\footnotesize
  hypothesized \\ distribution \par
};

\draw[line width=.5pt,-stealth] (outcome-space.south) -- (dist-q.north);
\draw[line width=.5pt,-stealth] (outcome-space.25) -- (dist-p.west);

\draw[decorate, decoration={brace,amplitude=5pt}]
  ([xshift=-.325cm]samples-4-1.south west) -- ([xshift=-.325cm]samples-1-1.north west)
  node[pos=0.5, xshift=-.2cm, anchor=east]{\scriptsize{iid}};

\draw[decorate, decoration={brace,amplitude=5pt}]
  ([yshift=.45cm]samples-1-3.north west) -- ([yshift=.45cm]samples-1-6.north east)
  node[pos=0.5, yshift=.2cm, anchor=south]{\scriptsize{iid}};

\draw (samples-1-1.south west) -- (samples-1-6.south east);
\draw (samples-2-1.south west) -- (samples-2-6.south east);
\draw (samples-3-1.south west) -- (samples-3-6.south east);
\draw (samples-4-1.south west) -- (samples-4-6.south east);

\node[
  % matrix of nodes,
  % outer sep=0pt,
  % column sep=0pt,
  draw = black,
  inner sep = 4pt,
  minimum height = .75cm,
  right = .5 of dist-p.east,
  anchor = west,
  % nodes={rectangle, align=center, text width = 1em,
  %   text height=1em, minimum width=1.5em},
] (linear-ordering) {
  \includegraphics[width=1em]{assets/schematic/gof-outcomes-icon-1.png}
  \raisebox{3pt}{${\prec}$}
  \includegraphics[width=1em]{assets/schematic/gof-outcomes-icon-2.png}
  \raisebox{3pt}{${\prec}{\dotsm}{\prec}$}
  \includegraphics[width=1em]{assets/schematic/gof-outcomes-icon-5.png}
  % \raisebox{3pt}{${\prec}$}
  % \includegraphics[width=1em]{}
  \raisebox{3pt}{${\prec}{\dotsm}{\prec}$}
  \includegraphics[width=1em]{assets/schematic/gof-outcomes-icon-11.png}
  \raisebox{3pt}{${\prec}$}
  \includegraphics[width=1em]{assets/schematic/gof-outcomes-icon-12.png}
  \\
};

\node [
  rectangle,
  draw = none,
  align = center,
  text width=4cm,
  inner sep = 0pt,
] (dist-p-label) at (dist-p-label -| linear-ordering) {
  \baselineskip=0pt\footnotesize
  linear ordering $\prec$ on domain \par
};

\node[
  rectangle,
  draw = black,
  inner sep = 0pt,
  right = .5 of samples.east,
  anchor = west,
  text width = 1.75cm,
  minimum height = 2cm,
  minimum width = 1.75cm,
  align = center,
  % text centerd,
] (ranking-procedure) {
  \footnotesize
  stochastic \\ ranking \\ procedure \par
};

\draw[-stealth] (ranking-procedure.north |- linear-ordering.south) -- (ranking-procedure.north);
\draw[-stealth] (samples-1-6.east) -- (ranking-procedure.west |- samples-1-6.east);
\draw[-stealth] (samples-2-6.east) -- (ranking-procedure.west |- samples-2-6.east);
\draw[-stealth] (samples-3-6.east) -- (ranking-procedure.west |- samples-3-6.east);
\draw[-stealth] (samples-4-6.east) -- (ranking-procedure.west |- samples-4-6.east);

\node [
  rectangle,
  draw = none,
  right= 2.7 of samples-1-6.east,
  text width = 2.8cm,
  anchor = west,
  inner xsep = 2pt,
  inner ysep = 0pt,
] (rank-1) {
  \scriptsize $r_n\,{=}\,\mathrm{Rank}(y_1, X_{1,1:m})$
};

\node [
  rectangle,
  draw = none,
  right= 2.7 of samples-2-6.east,
  text width = 2.8cm,
  anchor = west,
  inner xsep = 2pt,
  inner ysep = 0pt,
] (rank-2) {
  \scriptsize $r_n\,{=}\,\mathrm{Rank}(y_1, X_{2,1:m})$
};

\node [
  rectangle,  rectangle,
  draw = none,
  right= 2.7 of samples-3-6.east,
  text width = 2.8cm,
  anchor = west,
  inner xsep = 2pt,
  inner ysep = 0pt,= 0pt,
] (rank-3) {
  \scriptsize \,$\dots$
};

\node [
  rectangle,
  draw = none,
  right= 2.7 of samples-4-6.east,
  text width = 2.8cm,
  inner xsep = 2pt,
  inner ysep = 0pt,
  anchor = west,
] (rank-4) {
  \scriptsize $r_n\,{=}\,\mathrm{Rank}(y_1, X_{n,1:m})$
};

\draw[-stealth] (ranking-procedure.east |- rank-1.west) -- (rank-1.west);
\draw[-stealth] (ranking-procedure.east |- rank-2.west) -- (rank-2.west);
\draw[-stealth] (ranking-procedure.east |- rank-3.west) -- (rank-3.west);
\draw[-stealth] (ranking-procedure.east |- rank-4.west) -- (rank-4.west);

\node[
  right = 2.25 of linear-ordering,
  rectangle,
  draw = black,
  minimum height = .75cm,
] (histogram) {
  \scriptsize histogramming
};

\coordinate (anchor) at ([xshift=.5cm]linear-ordering.east);
\draw[-stealth] (rank-1.east) -| (anchor) -- (histogram.west);
\draw[-stealth] (rank-2.east) -| (anchor) -- (histogram.west);
\draw[-stealth] (rank-3.east) -| (anchor) -- (histogram.west);
\draw[-stealth] (rank-4.east) -| (anchor) -- (histogram.west);

\node[
  rectangle,
  inner sep = 0pt,
  minimum height = 1cm,
  minimum width = 1.5cm,
  text width = 2.75cm,
  below left = 1.5 and 1.3 of histogram.south,
  anchor = north
] (hist-yes) {
  \includegraphics[width=\textwidth]{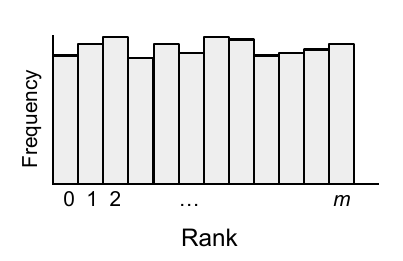}
};

\node[
  rectangle,
  inner sep = 0pt,
  minimum height = 1cm,
  minimum width = 1.5cm,
  text width = 2.75cm,
  below right = 1.5 and 1.3 of histogram.south,
  anchor = north
] (hist-no) {
  \includegraphics[width=\textwidth]{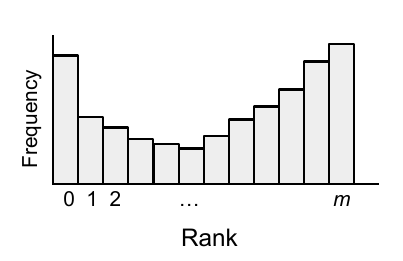}
};

\node [
  rectangle,
  draw = black,
  minimum height = .6cm,
  align = center,
  text width = 2cm,
  inner sep = 2pt,
  above = 0 of hist-yes.north,
  anchor = south,
] (hist-yes-label) {
  \baselineskip=0pt\footnotesize
  \scriptsize $\bp\,{\approx}\,\bq$ \\
  uniform
  \par
};

\node [
  rectangle,
  draw = black,
  minimum height = .6cm,
  align = center,
  text width = 2cm,
  inner sep = 2pt,
  above = 0 of hist-no.north,
  anchor = south,
] (hist-no-label) {
  \baselineskip=0pt\footnotesize
  \scriptsize $\bp\,{\not\approx}\,\bq$ \\
  non-uniform
  \par
};

\draw[-stealth] (histogram.south) -- (hist-yes-label.north);
\draw[-stealth] (histogram.south) -- (hist-no-label.north);

% Annotations
\node[
  draw = black,
  below = 0.5 of dist-q-label.south west,
  text width = 3.75cm,
  inner xsep = 0pt,
  inner ysep = 1pt,
  anchor = north west,
  align = center,
] (step-1) {\baselineskip=0pt\footnotesize
  \tiny \textbf{Stage 1}
  % Given observations $y_{1:n}$ \\
  % from unknown distribution $\bq$. \par
};

\node[
  draw = black,
  right = 0 of step-1.east,
  text width = 2.7cm,
  inner xsep = 0pt,
  inner ysep = 1pt,
  anchor = west,
  align = center,
] (step-2)  {\baselineskip=0pt\footnotesize
  \tiny \textbf{Stage 2}
  % Simulate $n$ datasets of
  % size $m$ from $\bp$. \par
};

\node[
  draw = black,
  right = 0 of step-2.east,
  text width = 5.25cm,
  inner xsep = 0pt,
  inner ysep = 1pt,
  anchor = west,
  align = center,
] (step-3)  {\baselineskip=0pt\footnotesize
  \tiny \textbf{Stage 3}
  % Compute stochastic rank $r_i$
  % of each $y_i$ within the $i$-th simulated dataset.\par
};

\node[
  draw = black,
  right = 0 of step-3.east,
  text width = 5.3cm,
  inner xsep = 0pt,
  inner ysep = 1pt,
  anchor = west,
  align = center,
] (step-4)  {\baselineskip=0pt\footnotesize
  \tiny \textbf{Stage 4}
  % Generate histogram of the ranks
  % and analyze the rank histogram for uniformity. \par
};

\end{tikzpicture}

\caption{Overview of the proposed goodness-of-fit test for discrete distributions.
\textbf{Stage 1}: Observations $\set{y_1,\dots,y_n}$ are assumed to be drawn
i.i.d.\ from an unknown discrete distribution $\bq$ over a finite or countable
observation domain $\dom$ (shown in the top-left corner).
\textbf{Stage 2}: For each $y_i$, $m$ samples $\set{X_{i1},\dots,X_{im}}$ are simulated
i.i.d.\ from the candidate distribution $\bp$ over $\dom$.
\textbf{Stage 3}: Given a total order $\prec$ on $\dom$ and the observed and
simulated data, a stochastic ranking procedure returns the rank $r_i$
of each $y_i$ within $\set{X_{i1},\dots,X_{im}}$, using uniform random numbers
to ensure the ranks are unique.
\textbf{Stage 4}: The histogram of the ranks $\set{r_1,\dots,r_n}$ is analyzed
for uniformity over $\set{0,1,\dots,m}$.}
\label{fig:workflow}
\end{figure*}

%!TEX root=./main.tex

\section{The Goodness-of-Fit Problem}
\label{sec:goodness-of-fit-problem}

\begin{problem}
Let $\bp$ be a candidate discrete distribution over a finite or countably
infinite domain $\dom$. Given observations $\set{y_1,\dots,y_n}$ drawn i.i.d.\
from an unknown distribution $\bq$ over $\dom$, is there sufficient evidence to
reject the hypothesis $\bp = \bq$?
\end{problem}

In the parlance of statistical testing, we have the following null and
alternative hypotheses:
\begin{align*}
\Hnull \defas [\bp = \bq] && \Halt \defas [\bp \ne \bq].
\end{align*}
A statistical test $\phi_n\colon \dom^{n} \to \set{\reject, \notreject}$ says,
for each size $n$ dataset, whether to \textsf{reject} or \textsf{not reject} the
null hypothesis $\Hnull$. We define the significance level
\begin{equation}
\alpha \defas \Prob{\phi_n(Y_{1:n}) = \reject \mid \Hnull}
\end{equation}
to be the probability of incorrectly declaring $\reject$. For a given level
$\alpha$, the performance of the test $\phi_n$ is characterized by its power
\begin{equation}
\beta \defas \Prob{\phi_n(Y_{1:n}) = \reject \mid \Halt},
\end{equation}
which is the probability of correctly declaring \reject.

Classical goodness-of-fit tests for nominal (unordered) data include the
multinomial test~\citep{horn1977};
Pearson chi-square test~\citep{pearson1900};
likelihood-ratio test~\citep{williams1976};
nominal Kolmogorov--Smirnov test~\citep{hoeffding1965,pettitt1977};
and power-divergence statistics~\citep{read1998}.
For ordinal data, goodness-of-fit test statistics include the ordinal Watson,
Cram\'er--von Mises, and Anderson--Darling~\citep{choulakian1994} tests as well as
the ordinal Kolmogorov--Smirnov~\citep{conover1972,arnold2011}.
These approaches typically suffer from statistical issues in large domains. They
assume that $\bp(x)$ is easy to evaluate (which is rarely possible in modern
machine-learning applications such as graphical models) and/or require that each
discrete outcome $x\,{\in}\,\dom$ has a non-negligible expectation $n\bp(x)$
\citep{olivares2010,starnes2010} (which requires a large number of observations
$n$ even when $\bp$ and $\bq$ are noticeably far from one another).
In addition, the rejection regions of these statistics are either
distribution-dependent (which requires reestimating the region for each new
candidate distribution $\bp$) or asymptotically distribution-free (which is
inexact for finite-sample data and imposes additional statistical assumptions on
$\bp$ and $\bq$).
The Mann--Whitney U~\citep{mann1947}, which is also a rank-based test that bears
some similarity to the SRS, is only consistent under median shift,
whereas the proposed method is consistent under general distributional inequality.

Recent work in the theoretical computer science literature has established
computational and sample complexity bounds for testing approximate equality of
discrete distributions \citet{batu2000}.
These methods have been primarily studied from a theoretical perspective and
have not been shown to yield practical goodness-of-fit tests in practice, nor
have they attained widespread adoption in the applied statistics community.
For instance, the test in \citet{acharya2015} is based on a variant
of Pearson chi-square. It requires enumerating over the domain
$\dom$ and representing $\bp(x)$ explicitly. The test in \citet{valiant2011}
requires specifying and solving a complex linear program.
While these algorithms may obtain asymptotically sample-optimal limits, they are
designed to detect differences between $\bp$ and $\bq$ in a way that is robust
to highly adversarial settings. These tests do not account for any structure in
the domain $\dom$ that can be leveraged by the practitioner to effectively
surface distributional differences.

Permutation and bootstrap resampling of test statistics are another family of
tests for goodness-of-fit \citep{good2004}. Theoretically rigorous and
consistent tests can be obtained using kernel methods, including the maximum
mean discrepancy~\citep{gretton2012} and discrete Stein
discrepancy~\citep{yang2018}.
Since the null distribution is unknown, rejection regions are estimated
by bootstrap resampling, which may be inexact due to discreteness
of the data.
Instead of bootstrapping, the SRS can be used to obtain an exact,
distribution-free test by defining an ordering using the kernel. This
connection is left for future work.

% !TEX root=./main.tex

\section{A Family of Exact and Distribution-Free GOF Tests}
\label{sec:srs-statistic}

In this section we describe our proposed method for addressing the
goodness-of-fit problem. The proposed procedure combines (i) the intuition from
existing methods for ordinal data~\citep{choulakian1994} that the
deviation between the expected CDF and empirical CDF of the sample serves as a
good signal for goodness-of-fit, with (ii) the flexibility of probe statistics
in Monte Carlo-based resampling tests~\citep{good2004}
to define, using an ordering $\prec$ on $\dom$, characteristics of the
distribution that are of interest to the experimenter.
Figure~\ref{fig:workflow} shows the step-by-step workflow
of the proposed test and Algorithm~\ref{alg:gof-uniform-rank} formally
describes the testing procedure.

%!TEX root=../main.tex

\begin{algorithm}[ht]
\footnotesize
\caption{Exact GOF Test using SRS}
\label{alg:gof-uniform-rank}
\begin{algorithmic}[1]
\Require{
  $\left\{ \begin{tabular}{l}
  simulator for candidate dist. $\bp$ over $\dom$; \\
  i.i.d.\ samples $\set{y_1,y_2,\dots,y_n}$ from dist. $\bq$;\\
  strict total order $\prec$ on $T$, of any order type; \\
  number $m \ge 1$ of datasets to resimulate; \\
  significance level $\alpha$ of hypothesis test; \\
  \end{tabular}
  \right.$
}
\Ensure{Decision to reject the null hypothesis
  \mbox{$\Hnull\,{:}\,\bp\,{=}\,\bq$}
  versus alternative hypothesis $\Halt\,{:}\,\bp\,{\ne}\,\bq$ at level $\alpha$.}
\For{$i=1,2,\dots,n$}
  \State $X_1, X_2, \dots, X_m \simiid \bp$
    \label{algline:gof-uniform-simulate-obs}
  \State $U_0, U_1, \dots, U_m \simiid \mathsf{Uniform}(0,1)$
    \label{algline:gof-uniform-rank-uniforms}
  \State $r_{i} \gets \sum_{k=1}^m \indicatornm{X_k \prec y_i}
    +\indicatornm{X_k = y_i, U_k < U_0}$
    \label{algline:gof-uniform-break-ties}
\EndFor
\State Use a standard hypothesis test to compute
  $p$-value of $\set{r_{1},\dots,r_{n}}$ under a discrete uniform
  on $\set{0,\dots,m}$.
  \label{algline:gof-uniform-pval}
\State \Return \textsf{reject} if $p \le \alpha$, else \textsf{not reject}.
\end{algorithmic}
\end{algorithm}

The proposed method addresses shortcomings of existing statistics in sparse
regimes. It does not require the ability to compute $\bp(x)$ and it is not based
on comparing the expected frequency of each $x\,{\in}\,\dom$ (which is often
vanishingly small) with its observed frequency. Furthermore, the stochastic rank
statistics $r_i$ have an exact and distribution-free sampling distribution. The
following theorem establishes that the $r_i$ are uniformly distributed if and only if
$\bp\,{=}\,\bq$. (Proofs are in the Appendix.)

\vspace*{5pt}
\begin{theorem}
\label{thm:uniformity}
Let $\dom$ be a finite or countably infinite set, let $\prec$ be a strict total
order on $\dom$, let $\bp$ and $\bq$ be two probability distributions on $\dom$,
and let $m$ be a positive integer. Consider the following random variables:
\begin{align}
X_0 &\sim \bq
  \label{eq:proc-sim-xq} \\
% U_0 &\sim \mathsf{Uniform}(0,1)
%   \label{eq:proc-sim-uq} \\
X_1, X_2, \dots, X_m &\simiid \bp
  \label{eq:proc-sim-xp} \\
U_0, U_1, U_2, \dots, U_m &\simiid \mathsf{Uniform}(0,1)
  \label{eq:proc-sim-up} \\
R =
  \textstyle\sum_{j=1}^m \indicator{X_j \prec X_0}
  &+ \indicator{X_j = X_0, U_j < U_0}.
  \label{eq:proc-stochastic-rank}
\end{align}
Then $\bp=\bq$ if and only if for all $m\,{\ge}\,1$, the rank $R$ is distributed
as a discrete uniform random variable on the set of integers
\mbox{$[m+1]\,{\defas}\,\set{0,1,\dots,m}$}.
\end{theorem}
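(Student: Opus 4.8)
The plan is to turn the tie-broken discrete rank $R$ into an ordinary rank of continuous variables by ``smearing'' each atom of $\bp$. Writing $F_\bp(x) \defas \sum_{x' \prec x}\bp(x')$ for the strict cumulative function of $\bp$ along $\prec$, set $V_j \defas F_\bp(X_j) + U_j\,\bp(X_j)$ for $j = 0,1,\dots,m$. I would first record three facts. (i) The half-open intervals $I_x \defas [F_\bp(x),\,F_\bp(x)+\bp(x))$, indexed by $x$ in the support of $\bp$, are pairwise disjoint with total length $1$, hence co-null in $[0,1]$; pushing $\bp\otimes\mathsf{Uniform}(0,1)$ through $(x,u)\mapsto F_\bp(x)+u\,\bp(x)$ therefore produces exactly $\mathsf{Uniform}(0,1)$, so $V_1,\dots,V_m \simiid \mathsf{Uniform}(0,1)$, while $V_0$ has the law $\mu$ obtained by pushing $\bq\otimes\mathsf{Uniform}(0,1)$ through the same map. (ii) A three-way case split on whether $X_j\prec X_0$, $X_j = X_0$, or $X_j\succ X_0$ shows that almost surely $\indicator{X_j\prec X_0} + \indicator{X_j = X_0,\,U_j<U_0} = \indicator{V_j<V_0}$ for each $j\ge 1$ (using that $\bp(X_j)>0$ since $X_j\sim\bp$, and that $0<U_j<1$ a.s.), so $R = \sum_{j=1}^m \indicator{V_j<V_0}$ counts how many of the i.i.d.\ uniforms $V_1,\dots,V_m$ fall below $V_0$. (iii) $\mu = \mathsf{Uniform}(0,1)$ if and only if $\bp=\bq$: any atom of $\bq$ outside $\mathrm{supp}(\bp)$ forces an atom in $\mu$, so $\mu$ atomless implies $\bq$ is carried by $\mathrm{supp}(\bp)$, and then $\mu(I_x) = \bq(x)$ while $\mathrm{Leb}(I_x) = \bp(x)$, so $\mu = \mathsf{Uniform}(0,1)$ forces $\bq(x)=\bp(x)$ for all $x$; the converse is immediate from (i).

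Given (i)--(iii), both directions follow from the identity $\Prob{R = k} = \binom{m}{k}\,\expect[T\sim\mu]{T^k(1-T)^{m-k}}$, obtained by conditioning on $V_0 = t$ (so that $R$ is $\mathsf{Binomial}(m,t)$, using independence of $V_0$ from $V_1,\dots,V_m$) and integrating. For the ``only if'' direction, $\bp=\bq$ gives $\mu = \mathsf{Uniform}(0,1)$ by (iii), so $\Prob{R=k} = \binom{m}{k}\int_0^1 t^k(1-t)^{m-k}\,\diff t = 1/(m+1)$ for every $k\in[m+1]$ and every $m\ge 1$. (One can equally note that $V_0,\dots,V_m$ are then i.i.d.\ continuous, hence a.s.\ distinct and exchangeable, so the rank $R$ of $V_0$ is uniform.)

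For the ``if'' direction, suppose $R$ is uniform on $[m+1]$ for every $m\ge 1$. Taking $k=m$ in the identity yields $\expect[T\sim\mu]{T^m} = 1/(m+1)$ for all $m\ge 1$, so every moment of $\mu$ agrees with the corresponding moment of $\mathsf{Uniform}(0,1)$. A probability measure on the bounded interval $[0,1]$ is determined by its moments (the Hausdorff moment problem, via Weierstrass approximation), so $\mu = \mathsf{Uniform}(0,1)$, and then $\bp=\bq$ by (iii).

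The step I expect to require the most care is fact (i) for arbitrary countable order types and for $\bp$ lacking full support: when $(\dom,\prec)$ has the order type of $\mathbb{Q}$ or contains limit points, the intervals $I_x$ are not laid out as a simple increasing sequence, and one must verify that they still tile $[0,1]$ up to a Lebesgue-null set; similarly, one must confirm that the almost-sure identity in (ii) persists in the degenerate case where $X_0\notin\mathrm{supp}(\bp)$, so that $V_0 = F_\bp(X_0)$ depends only on $X_0$. Once (i) and (ii) are secured, the probabilistic content---exchangeability in one direction, the moment problem in the other---is routine.
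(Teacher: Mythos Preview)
Your proof is correct and, in its sufficiency direction, genuinely different from the paper's. Both arguments begin with the same ``smearing'' map: the paper's $F_\bp(x,u) \defas \tbp(x) + u\,\bp'(x)$ is exactly your $V_j = F_\bp(X_j) + U_j\,\bp(X_j)$, and the paper likewise rewrites $R$ as the rank of $(X_0,U_0)$ under the dictionary order on $\dom\times[0,1]$, which is equivalent to your $R = \sum_{j=1}^m \indicator{V_j < V_0}$. The necessity direction is handled identically in both (i.i.d.\ continuous variables, hence exchangeable with a.s.\ distinct values, hence uniform rank). Where the two diverge is in proving sufficiency. The paper argues via empirical processes: it observes that $R/m$ is the empirical CDF $\hat F_\bp^m$ evaluated at $T_0$, invokes Glivenko--Cantelli to get $\hat F_\bp^m(T_0) \to F_\bp(T_0)$ a.s., combines this with the assumed uniformity (and Lemma~\ref{lem:discrete-uniform-continuous}) to conclude $F_\bq(F_\bp^{-1}(s)) = s$, and then unwinds to $\bp = \bq$. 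Your route is more direct: from the conditional-binomial identity $\Prob{R=m} = \expect[T\sim\mu]{T^m}$ you read off all moments of $\mu$ as $1/(m+1)$, and the Hausdorff moment problem on $[0,1]$ immediately forces $\mu = \mathsf{Uniform}(0,1)$, whence $\bp=\bq$ by your fact~(iii). This avoids Glivenko--Cantelli, bounded convergence, the embedding of $(\dom,\prec)$ into $\Rationals\cap(0,1)$, and the somewhat delicate handling of the generalized inverse $F_\bp^{-1}$.

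Your residual worry about exotic order types is unfounded: the disjointness of the $I_x$ requires only that $x \prec x'$ implies $F_\bp(x') \ge F_\bp(x) + \bp(x)$, which holds for any total order since the sum defining $F_\bp(x')$ includes $\bp(x)$ as a term; together with $\sum_x \bp(x) = 1$ this gives the co-null tiling without any appeal to the order type. Likewise, the degenerate case $\bp(X_0)=0$ causes no trouble in (ii): the event $\{X_j = X_0,\ \bp(X_0)=0\}$ is null since $X_j\sim\bp$, and in the remaining two cases the inequalities $V_j < F_\bp(X_j)+\bp(X_j) \le F_\bp(X_0) = V_0$ (when $X_j\prec X_0$) and $V_j \ge F_\bp(X_j) \ge F_\bp(X_0) = V_0$ (when $X_j\succ X_0$) go through unchanged.
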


Note that the $r_i$ in line~\ref{algline:gof-uniform-break-ties} of
Algorithm~\ref{alg:gof-uniform-rank} are $n$ i.i.d.\ samples of the random
variable $R$ in Eq.~\eqref{eq:proc-stochastic-rank}, which is the rank of
$X_0\,{\sim}\,\bq$ within a size $m$ sample $X_{1:m}\,{\simiid}\,\bp$. For
Theorem~\ref{thm:uniformity}, it is essential that ties are broken by pairing
each $X_i$ with a uniform random variable $U_i$, as opposed to, e.g., breaking
each tie independently with probability $1/2$, as demonstrated by the next
example.

\begin{example}
\label{example:uniform-bernoulli-tie-break}
Let $\dom$ contain a single element. Then all the $X_i$ (for $0 \le i \le m$) are
equal almost surely. Break each tie between $X_0$ and $X_j$ by flipping a fair
coin. Then $R$ is binomially distributed with $m$ trials and weight $1/2$, not
uniformly distributed over $[m+1]$.
\end{example}

We now establish theoretical properties of $R$ which form the basis of the
goodness-of-fit test in Algorithm~\ref{alg:gof-uniform-rank}. First note that in
the case where all the $X_i$ are almost surely distinct, the forward direction
of Theorem~\ref{thm:uniformity}, which establishes that if $\bp = \bq$ then the
rank $R$ is uniform for all $m \ge 1$, is easy to show and is known in the
statistical literature \citep{ahsanulla2013}. However no existing results
make the connection between rank statistics and discrete random variables over
countable domains with ties broken stochastically. Nor do they establish that
$\bp\,{=}\,\bq$ is a \textit{necessary} condition for uniformity of $R$ (across all
$m$ beyond some integer) and can therefore be used as the basis of a consistent
goodness-of-fit test. We now state an immediate consequence of
Theorem~\ref{thm:uniformity}.

\begin{corollary}
\label{corr:p-ne-q-exists}
If $\bp \ne \bq$, then there is some $M \ge 1$ such that $R$ is not uniformly
distributed on $[M+1]$.
\end{corollary}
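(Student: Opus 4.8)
The plan is to obtain the statement as the contrapositive of the backward (necessity) direction of Theorem~\ref{thm:uniformity}. That theorem asserts the biconditional $\bp = \bq \iff \big(\text{for all } m \ge 1,\ R \text{ is uniform on } [m+1]\big)$; in particular it contains the implication ``if $R$ is uniform on $[m+1]$ for every $m \ge 1$, then $\bp = \bq$.'' Negating both sides of this single implication immediately yields ``if $\bp \ne \bq$, then it is not the case that $R$ is uniform on $[m+1]$ for every $m \ge 1$,'' which is the desired conclusion.

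The one step to spell out is that the negation of the universally quantified statement ``for all $m \ge 1$, $R$ is uniform on $[m+1]$'' is precisely the existential statement ``there exists $M \ge 1$ such that $R$ is not uniform on $[M+1]$,'' where $R$ is understood as the rank in Eq.~\eqref{eq:proc-stochastic-rank} computed with $m = M$; taking this $M$ as the witness finishes the proof. There is no substantive obstacle here: all of the real work lies in the necessity direction of Theorem~\ref{thm:uniformity}, which is exactly the fact that a difference between $\bp$ and $\bq$ forces non-uniformity of $R$ at some finite sample size. The corollary merely restates that direction in contrapositive form, and since Theorem~\ref{thm:uniformity} may be assumed, the argument is a single logical step — consistent with its billing as ``an immediate consequence.''
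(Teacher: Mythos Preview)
Your proposal is correct and matches the paper's own treatment exactly: the paper states this corollary as ``an immediate consequence'' of Theorem~\ref{thm:uniformity} and gives no further argument, which is precisely the contrapositive reasoning you spell out.
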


The next theorem significantly strengthens Corollary~\ref{corr:p-ne-q-exists} by
showing that if $\bp \ne \bq$, the rank statistic is non-uniform for
\textit{all but finitely many} $m$.

\begin{theorem}
\label{thm:p-ne-q-finitely-many-m}
Let $\bp \ne \bq$ and $M$ be defined as in Corollary~\ref{corr:p-ne-q-exists}.
Then for all $m \ge M$, the rank $R$ is not uniformly
distributed on $[m+1]$.
\end{theorem}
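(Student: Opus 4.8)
The plan is to prove the contrapositive in the following stronger ``downward monotonicity'' form: if the rank $R$ (call it $R_m$ when we wish to display its dependence on $m$) is uniformly distributed on $[m+1]$ for some $m \ge 1$, then $R_{m'}$ is uniformly distributed on $[m'+1]$ for every integer $1 \le m' \le m$. Granting this, suppose toward a contradiction that $R_m$ were uniform on $[m+1]$ for some $m \ge M$. Taking $m' = M$ (which satisfies $1 \le M \le m$ by Corollary~\ref{corr:p-ne-q-exists} and the assumption $m \ge M$), $R_M$ would be uniform on $[M+1]$, contradicting the defining property of $M$. Hence $R_m$ is non-uniform on $[m+1]$ for every $m \ge M$, which is the claim.

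It is enough to prove the one-step case $m \to m-1$ and iterate by finite downward induction. First I would couple $R_m$ and $R_{m-1}$ on one probability space: keep $X_0,U_0$ and the sequence $(X_1,U_1),\dots,(X_m,U_m)$ of Eqs.~\eqref{eq:proc-sim-xq}--\eqref{eq:proc-sim-up}, set $B_j \defas \indicator{X_j \prec X_0} + \indicator{X_j = X_0, U_j < U_0}$, and write $R_m = \sum_{j=1}^m B_j$, $R_{m-1} = \sum_{j=1}^{m-1} B_j$, so that $R_m = R_{m-1} + B_m$ with $B_m \in \set{0,1}$. The key point is that, conditionally on $(X_0,U_0)$, the variables $B_1,\dots,B_m$ are i.i.d.\ Bernoulli with a common (random) parameter; in particular $(B_1,\dots,B_m)$ is exchangeable and, given $\set{R_m = k}$, uniform over the binary strings with exactly $k$ ones, so $\Prob{B_m = 1 \mid R_m = k} = k/m$, and since this value does not involve $(X_0,U_0)$ it also holds unconditionally. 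Combining it with $R_{m-1} = R_m - B_m$ yields, for each $j \in \set{0,1,\dots,m-1}$,
\begin{equation*}
\Prob{R_{m-1} = j}
  = \frac{m-j}{m}\,\Prob{R_m = j}
  + \frac{j+1}{m}\,\Prob{R_m = j+1}.
\end{equation*}
If $R_m$ is uniform on $[m+1]$, substituting $\Prob{R_m = j} = \Prob{R_m = j+1} = 1/(m+1)$ collapses the right-hand side to $\tfrac{1}{m+1}\cdot\tfrac{(m-j)+(j+1)}{m} = 1/m$ for every $j$, so $R_{m-1}$ is uniform on $[m]$, finishing the induction step.

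The step I expect to need the most care is making the exchangeability argument airtight: one must check that conditioning on $\set{R_m = k}$ does not disturb the symmetry among $B_1,\dots,B_m$ despite their shared dependence on $(X_0,U_0)$, and that the probability $k/m$ truly survives the removal of that conditioning. This follows from a short direct computation: letting $\theta = \theta(X_0,U_0)$ be the common conditional Bernoulli parameter of the $B_j$, one has $\Prob{B_m = 1,\, R_m = k} = \binom{m-1}{k-1}\,\expect{\theta^k(1-\theta)^{m-k}}$ and $\Prob{R_m = k} = \binom{m}{k}\,\expect{\theta^k(1-\theta)^{m-k}}$, whose ratio is $\binom{m-1}{k-1}/\binom{m}{k} = k/m$ regardless of the distribution of $\theta$. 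Everything else is elementary arithmetic; notably, no analytic machinery (such as the Hausdorff moment problem, or the ``if'' direction of Theorem~\ref{thm:uniformity}) is needed, and the monotonicity property obtained along the way is of independent interest.
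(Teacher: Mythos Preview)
Your argument is correct and follows the same overall strategy as the paper: both proofs reduce to the ``downward monotonicity'' lemma that uniformity of the rank at level $m$ forces uniformity at level $m-1$, proved via exchangeability of the Bernoulli indicators $B_j = \indicator{(X_j,U_j) \vlt (X_0,U_0)}$, and then iterate. The paper states this as Lemma~\ref{lem:bernoulli-exchangeable} (phrased contrapositively: non-uniform at $m$ implies non-uniform at $m+1$) and proves it by splitting into the cases $r \ne 1/2$ (handled by a mean argument) and $r = 1/2$ (handled by a combinatorial count of binary strings with a fixed number of ones). Your execution is somewhat cleaner: by writing $\Prob{B_m = 1,\, R_m = k} = \binom{m-1}{k-1}\expect{\theta^k(1-\theta)^{m-k}}$ and $\Prob{R_m = k} = \binom{m}{k}\expect{\theta^k(1-\theta)^{m-k}}$ via the conditional-i.i.d.\ (mixture) representation, you obtain $\Prob{B_m = 1 \mid R_m = k} = k/m$ in one stroke, with no case analysis on the marginal Bernoulli parameter. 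The payoff is a shorter proof and an explicit linear recurrence relating the laws of $R_m$ and $R_{m-1}$; the paper's version, on the other hand, isolates the lemma at the level of abstract finitely exchangeable Bernoulli sequences, which makes its scope slightly more visible.
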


In fact, unless $\bp$ and $\bq$ satisfy an adversarial symmetry relationship
under the selected ordering $\prec$, the rank is non-uniform for
\textit{all} $m \ge 1$.

\begin{corollary}
\label{corr:total-order-no-symmetry}
Let $\vlt$ denote the lexicographic order on $\dom \times [0,1]$ induced by
$(\dom, \prec)$ and $([0,1], <)$. Suppose $\Prob{(X,U_1) \vlt (Y,U_0)} \ne 1/2$
for $Y \sim \bq$, $X \sim \bp$, and $U_0, U_1 \simiid \mathsf{Uniform}(0,1)$.
Then for all $m \ge 1$, the rank $R$ is not uniformly distributed on $[m+1]$.
\end{corollary}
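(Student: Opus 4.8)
The plan is to bypass the machinery behind Theorems~\ref{thm:uniformity} and~\ref{thm:p-ne-q-finitely-many-m} and instead read off the conclusion from a single first-moment computation, namely $\mathbb{E}[R]$. The starting observation is that the stochastic tie-breaking in Eq.~\eqref{eq:proc-stochastic-rank} is exactly a ranking by the lexicographic order $\vlt$: for each $j$ the events $\set{X_j \prec X_0}$ and $\set{X_j = X_0,\ U_j < U_0}$ are disjoint, and their union is precisely $\set{(X_j,U_j) \vlt (X_0,U_0)}$ by definition of the lexicographic order on $\dom \times [0,1]$. Hence
\begin{equation*}
R = \textstyle\sum_{j=1}^m \indicator{(X_j,U_j) \vlt (X_0,U_0)}.
\end{equation*}

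Next I would apply linearity of expectation, using that $(X_1,U_1),\dots,(X_m,U_m)$ are i.i.d.\ and independent of $(X_0,U_0)$, to obtain $\mathbb{E}[R] = m\,\Prob{(X_1,U_1) \vlt (X_0,U_0)}$. Since in the corollary's notation $X_0 \sim \bq$ plays the role of $Y$ and $X_1 \sim \bp$ the role of $X$, this is $\mathbb{E}[R] = m\,\Prob{(X,U_1) \vlt (Y,U_0)}$. Finally, a discrete uniform random variable on $[m+1] = \set{0,1,\dots,m}$ has mean $m/2$; so if $\Prob{(X,U_1) \vlt (Y,U_0)} \ne 1/2$, then $\mathbb{E}[R] \ne m/2$ for every $m \ge 1$, and therefore $R$ is not uniformly distributed on $[m+1]$ for any $m \ge 1$, which is the claim.

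There is no serious obstacle here: the only point needing care is verifying that the indicator in Eq.~\eqref{eq:proc-stochastic-rank} equals $\indicator{(X_j,U_j)\vlt(X_0,U_0)}$, which is immediate from the definition of $\vlt$. The real content of the corollary is the recognition that its hypothesis is exactly the negation of the necessary condition $\mathbb{E}[R] = m/2$, which makes the non-uniformity conclusion transparent and, crucially, valid for \emph{all} $m \ge 1$ rather than only for large $m$. As a consistency check, when $\bp = \bq$ the pair $(X,U_1)$ is exchangeable with $(Y,U_0)$ and $\Prob{U_1 = U_0} = 0$, which forces $\Prob{(X,U_1) \vlt (Y,U_0)} = 1/2$, in agreement with the forward direction of Theorem~\ref{thm:uniformity}.
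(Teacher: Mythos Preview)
Your argument is correct. The identification of the tie-broken indicator with $\indicator{(X_j,U_j)\vlt(X_0,U_0)}$ is sound (ties in the second coordinate have probability zero), and the first-moment computation $\mathbb{E}[R]=m\,\Prob{(X,U_1)\vlt(Y,U_0)}$ then rules out uniformity for every $m\ge 1$ whenever that probability is not $1/2$.

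The paper takes a different route: it observes that for $m=1$ the rank $R$ is Bernoulli with success probability $\Prob{(X,U_1)\vlt(Y,U_0)}$, so the hypothesis gives non-uniformity at $m=1$, and then it invokes Theorem~\ref{thm:p-ne-q-finitely-many-m} to propagate non-uniformity to all $m\ge 1$. That theorem in turn rests on a lemma about finitely exchangeable Bernoulli sequences (Lemma~\ref{lem:bernoulli-exchangeable}), which handles both the case $r\ne 1/2$ (via exactly your mean argument) and the harder case $r=1/2$. Your approach is strictly more elementary for this corollary because the hypothesis already pins down $r\ne 1/2$, so only the easy case of that lemma is relevant, and you extract it directly without ever passing through $m=1$ or the inductive step. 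The paper's route, by contrast, reuses the general machinery it has already built; what it buys is uniformity of presentation, since Theorem~\ref{thm:p-ne-q-finitely-many-m} is needed anyway for the situation where $r=1/2$ but $R$ is still non-uniform at some $M>1$.
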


The next theorem establishes the existence of an ordering on $\dom$ satisfying
the hypothesis of Corollary~\ref{corr:total-order-no-symmetry}.

\begin{theorem}
\label{thm:non-uniform-all-m} If $\bp \ne \bq$, then there is an ordering
$\prec^*$ whose associated rank statistic $R$ is non-uniform for $m = 1$ (and
hence by Theorem~\ref{thm:p-ne-q-finitely-many-m} for all $m \ge 1$).
\end{theorem}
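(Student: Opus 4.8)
The plan is to reduce the whole statement to Corollary~\ref{corr:total-order-no-symmetry}: it suffices to exhibit \emph{one} strict total order $\prec^*$ on $\dom$ for which $\Prob{(X,U_1)\vlt^*(Y,U_0)}\ne 1/2$, where $X\sim\bp$, $Y\sim\bq$, $U_0,U_1\simiid\mathsf{Uniform}(0,1)$, and $\vlt^*$ is the lexicographic order on $\dom\times[0,1]$ induced by $(\dom,\prec^*)$ and $([0,1],<)$; the corollary then gives non-uniformity of $R$ for all $m\ge 1$, in particular for $m=1$. Writing $s\defas\Prob{X=Y}=\sum_{x\in\dom}\bp(x)\bq(x)$, which is independent of the ordering, unwinding the lexicographic comparison and using independence of $\indicator{X=Y}$ from $(U_0,U_1)$ gives $\Prob{(X,U_1)\vlt^*(Y,U_0)}=\Prob{X\prec^* Y}+s/2$; since the complementary event has probability $\Prob{Y\prec^* X}+s/2$ (the tie $X=Y,\,U_0=U_1$ has probability zero) and the two sum to $1$, the condition $\Prob{(X,U_1)\vlt^*(Y,U_0)}\ne 1/2$ is equivalent to $\Prob{X\prec^* Y}\ne\Prob{Y\prec^* X}$. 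So the goal becomes: find $\prec^*$ making these two probabilities unequal.

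Next I would introduce the antisymmetric weight $w(x,y)\defas\bp(x)\bq(y)-\bq(x)\bp(y)$, so that $\Prob{X\prec^* Y}-\Prob{Y\prec^* X}=\sum_{x\prec^* y}w(x,y)$; this rearrangement is legitimate because $\sum_{x,y}\abs{w(x,y)}\le 2$, so all the series in sight converge absolutely. I would take $\prec^*$ to be the \emph{likelihood-ratio ordering}: put $f(x)\defas\bp(x)/(\bp(x)+\bq(x))$ on the support of $\bp+\bq$, discard the remaining ($\bp(x)=\bq(x)=0$) elements to the end in any order, and let $\prec^*$ be any strict total order on $\dom$ with $f(x)>f(y)\Rightarrow x\prec^* y$. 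The elementary point is that, by cross-multiplication, $f(x)\ge f(y)$ is equivalent to $w(x,y)\ge 0$, with $f(x)=f(y)$ equivalent to $w(x,y)=0$; hence for every pair with $x\prec^* y$ we have $w(x,y)\ge 0$, so $\sum_{x\prec^* y}w(x,y)$ is a sum of nonnegative terms.

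It remains to show the sum is strictly positive. Since $\bp\ne\bq$ and both are probability distributions, there exist $a,b\in\dom$ with $\bp(a)>\bq(a)$ and $\bp(b)<\bq(b)$; these force $a\ne b$, $\bp(a)>0$, $\bq(b)>0$, and $f(a)>1/2>f(b)$, so $a\prec^* b$ and the term $w(a,b)$ occurs in the sum. A one-line check ($\bq(a)\bp(b)\le\bq(a)\bq(b)<\bp(a)\bq(b)$, handling $\bq(a)=0$ separately) shows $w(a,b)>0$, so $\Prob{X\prec^* Y}-\Prob{Y\prec^* X}>0$, and Corollary~\ref{corr:total-order-no-symmetry} finishes the proof. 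I do not expect a genuine obstacle here — the likelihood-ratio ordering is precisely what aligns every pairwise contribution in the same direction — but the care goes into the bookkeeping for distributions without full support (well-definedness of $f$, existence of $a\ne b$ with $\bp(a),\bq(b)>0$, and the \emph{strict} inequality $w(a,b)>0$) and into the absolute-convergence check that licenses writing the probability difference as $\sum_{x\prec^* y}w(x,y)$.
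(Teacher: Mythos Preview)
Your proof is correct and takes a genuinely different route from the paper's.

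The paper orders $\dom$ by the \emph{additive} difference $\bh(x)=\bq(x)-\bp(x)$ (larger $\bh$ $\Rightarrow$ smaller in $\prec$), proves the pointwise CDF domination $\tbq(x)>\tbp(x)$ (their Lemma~\ref{lem:cdf-q-exceed-cdf-p}), and then pushes through a chain of inequalities on $\sum_x \tbq(x)\bp(x)-\sum_x\tbp(x)\bq(x)$ using that domination together with Lemma~\ref{lem:sum-a-eq-sum-ac}. You instead order by the \emph{likelihood ratio} $f(x)=\bp(x)/(\bp(x)+\bq(x))$ and observe that this ordering is exactly the one making every pairwise contribution $w(x,y)=\bp(x)\bq(y)-\bq(x)\bp(y)$ nonnegative, so the difference $\Prob{X\prec^*Y}-\Prob{Y\prec^*X}=\sum_{x\prec^*y}w(x,y)$ is manifestly a sum of nonnegative terms with one strictly positive witness. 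Your argument is shorter, avoids the auxiliary CDF lemma entirely, and the equivalence $f(x)\ge f(y)\Leftrightarrow w(x,y)\ge 0$ is the natural reason the construction works. The paper's $\bh$-ordering, on the other hand, is what feeds directly into the quantitative $L_\infty$ bound of Theorem~\ref{thm:sample-complexity}, so their choice is dictated by what comes next rather than by what gives the cleanest proof of this particular statement. Note also that your ordering is (essentially) the reverse of theirs, so you end up proving $\Prob{R=1}>1/2$ where they prove $\Prob{R=0}>1/2$; either suffices.
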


Intuitively, $\prec^*$ sets elements $x\in \dom$ which have a high probability
under $\bq$ to be ``small'' in the linear order, and elements $x\,{\in}\,\dom$
which have a high probability under $\bp$ to be ``large'' in the linear order.
More precisely, $\prec^*$ maximizes the sup-norm distance between the induced
cumulative distribution functions $\tbp$ and $\tbq$ of $\bp$ and $\bq$,
respectively (Figure~\ref{fig:gof-bit-strings}). Under a slight variant of this
ordering, for finite $\dom$, the next theorem establishes the sample complexity
required to obtain exponentially high power in terms of the statistical distance
$L_\infty(\bp, \bq) = \sup_{x\in\dom}\abs{\bp(x) - \bq(x)}$ between $\bp$ and
$\bq$.

\begin{theorem}
\label{thm:sample-complexity}
Given significance level $\alpha=2\Phi(-c)$ for $c>0$, there is
an ordering for which the proposed test
with $m=1$ achieves power $\beta \ge 1 - \Phi(-c)$ using
\begin{equation}
\label{eq:sample-complexity}
n \approx 4c^2/L_\infty(\bp, \bq)^4
\end{equation}
samples from $\bq$, where $\Phi$ is the cumulative distribution function of a
standard normal.
\end{theorem}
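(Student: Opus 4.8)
The plan is to specialize the statistic to $m=1$, where $R$ is Bernoulli, exhibit an ordering under which its success probability is pushed away from $1/2$ by $L_\infty(\bp,\bq)^2$, and then run the routine sample-size calculation for a binomial test.

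\textbf{Reduction to a Bernoulli test.}
When $m=1$ the summand in~\eqref{eq:proc-stochastic-rank} lies in $\set{0,1}$, so the $r_i$ returned by Algorithm~\ref{alg:gof-uniform-rank} are $n$ i.i.d.\ $\mathsf{Bernoulli}(\theta)$ draws with $\theta\defas\Prob{R=1}=\Prob{(X_1,U_1)\vlt(X_0,U_0)}$, where $\vlt$ is the lexicographic order of Corollary~\ref{corr:total-order-no-symmetry}, $X_0\sim\bq$, and $X_1\sim\bp$. By Theorem~\ref{thm:uniformity}, $\theta=1/2$ under $\Hnull$. I would therefore use the normal-approximation two-sided test that rejects iff $2\sqrt n\,\abs{\hat\theta-1/2}\ge c$ for $\hat\theta\defas n^{-1}\sum_i r_i$, which has asymptotic level $\alpha=2\Phi(-c)$. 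It then remains to choose $\prec$ so that $\abs{2\theta-1}$ is large under $\Halt$.

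\textbf{The ordering.}
Assume $\bp\ne\bq$ and put $\delta\defas L_\infty(\bp,\bq)>0$. I would take $\prec$ to be the likelihood-ratio order on $\dom$: $x\prec x'$ whenever $\bp(x)/\bq(x)>\bp(x')/\bq(x')$, with ties broken arbitrarily and with elements satisfying $\bq(x)=0$ placed first and those with $\bp(x)=0$ placed last. This is the ``slight variant'' of the ordering $\prec^*$ of Theorem~\ref{thm:non-uniform-all-m}; the property I exploit is the classical fact that a monotone likelihood ratio implies stochastic dominance, i.e.\ $F_\bp(x)\defas\sum_{x'\preceq x}\bp(x')\ge F_\bq(x)$ for every $x$. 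Since $\tbp(x,u)=\sum_{x'\prec x}\bp(x')+\bp(x)u$ is affine in $u$, evaluating at the two endpoints of each fiber $\set{x}\times[0,1]$ upgrades this to $\tbp(z)\ge\tbq(z)$ for all $z\in\dom\times[0,1]$.

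\textbf{Lower bound on $\abs{2\theta-1}$.}
Conditioning $\theta=\Prob{W\vlt Z}$ (with $W\sim\tbp$ and $Z\sim\tbq$ independent) on $Z$, resp.\ on $W$, and using that $\tbq(Z)$ is $\mathsf{Uniform}(0,1)$ so that $\expect{\tbq(Z)}=1/2$, I obtain
\begin{align*}
\theta-\tfrac12=\expect[Z\sim\tbq]{\tbp(Z)-\tbq(Z)}=\expect[W\sim\tbp]{\tbp(W)-\tbq(W)}.
\end{align*}
By dominance every fiber contributes nonnegatively to both expressions, so I may discard all fibers but the one over a maximizer $x^*$ of $\abs{\bp(x)-\bq(x)}=\delta$. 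If $\bp(x^*)>\bq(x^*)$, keep the $\tbp$-expectation: the fiber over $x^*$ carries $\tbp$-mass $\bp(x^*)=\bq(x^*)+\delta\ge\delta$, and there $\tbp-\tbq=[\sum_{x'\prec x^*}(\bp(x')-\bq(x'))]+\delta u\ge\delta u$ by dominance, whose average over $u\in[0,1]$ is at least $\delta/2$. If $\bq(x^*)>\bp(x^*)$, keep the $\tbq$-expectation: the fiber over $x^*$ carries $\tbq$-mass $\bq(x^*)\ge\delta$, and there $\tbp-\tbq=[\sum_{x'\prec x^*}(\bp(x')-\bq(x'))]-\delta u$, where dominance at $u=1$ forces $\sum_{x'\prec x^*}(\bp(x')-\bq(x'))\ge\delta$, again giving $u$-average at least $\delta/2$. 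In either case the fiber over $x^*$ alone contributes at least $\delta\cdot(\delta/2)$, so $\theta-\tfrac12\ge\delta^2/2$ and hence $2\theta-1\ge L_\infty(\bp,\bq)^2$.

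\textbf{Sample size, and the main obstacle.}
Under $\Halt$, $\hat\theta$ is approximately $\mathcal N(\theta,\theta(1-\theta)/n)$, so the power of the test above is at least $\Phi\!\big((\theta-\tfrac12)\sqrt n/\sqrt{\theta(1-\theta)}-c/(2\sqrt{\theta(1-\theta)})\big)$, which exceeds $\Phi(c)=1-\Phi(-c)$ once $(\theta-\tfrac12)\sqrt n\ge c(\sqrt{\theta(1-\theta)}+\tfrac12)$; bounding $\sqrt{\theta(1-\theta)}\le\tfrac12$ this holds as soon as $n\ge c^2/(\theta-\tfrac12)^2$, and $(\theta-\tfrac12)^{-2}\le 4/L_\infty(\bp,\bq)^4$ gives~\eqref{eq:sample-complexity}. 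I expect the main obstacle to be the ordering step combined with the fiber-localization argument: one must see that for the likelihood-ratio order the whole CDF discrepancy feeding $2\theta-1$ can be concentrated on a single fiber of $\bp$- or $\bq$-mass at least $\delta$, so no cancellation can occur and the quadratic bound $\abs{2\theta-1}\ge L_\infty(\bp,\bq)^2$ is preserved; the Bernoulli reduction and the sample-size estimate are then standard.
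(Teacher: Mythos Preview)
Your proposal is correct and reaches the same inequality $\Pr(R=1)\ge \tfrac12+\tfrac12 L_\infty(\bp,\bq)^2$ that the paper establishes (as $\Pr(R=0)$, for the reversed ordering), after which the Bernoulli/normal sample-size calculation is the same in both. The route to the key inequality, however, is genuinely different.

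The paper orders $\dom$ by the \emph{difference} $\bh_{\bp,\bq}(x)=\bq(x)-\bp(x)$. That ordering does not in general make $\tbp$ dominate $\tbq$, so the paper cannot localize to a single fiber directly. Instead it perturbs to an $\epsilon$-discrete pair (Lemma~\ref{lem:change-to-e-discrete-pair}), then replaces $\bp$ by an auxiliary measure $\bp^*$ that concentrates the discrepancy on the extreme element $x_0^+$ while only decreasing $\Pr(R=0)$ (Lemma~\ref{lem:mass-move-p}), and finally carries out a three-case computation of $\Pr(R_{\bp^*,\bq}=0)-\Pr(R_{\bq,\bq}=0)$. A separate swap of $\bp$ and $\bq$ handles the case where the $L_\infty$ gap is realized by $\bh_{\bq,\bp}$.

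Your ordering by the \emph{likelihood ratio} $\bp/\bq$ is the decisive simplification: by the monotone-likelihood-ratio property it forces $\tbp\ge\tbq$ on all of $\dom\times[0,1]$, so every fiber contributes nonnegatively to both representations $\theta-\tfrac12=\mathbb E_{\tbq}[\tbp-\tbq]=\mathbb E_{\tbp}[\tbp-\tbq]$, and you may keep only the fiber over a maximizer $x^*$ of $|\bp-\bq|$. This dispenses with the $\epsilon$-discretization and the auxiliary $\bp^*$, handles both signs of $\bp(x^*)-\bq(x^*)$ with one ordering, and does not require $\dom$ to be finite (the paper restricts to finite $\dom$ for this result). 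The trade-off is that your ordering depends on \emph{both} $\bp$ and $\bq$, whereas the paper's ordering is defined through the signed measure $\bq-\bp$ and is the one used elsewhere in the paper; but for the purposes of Theorem~\ref{thm:sample-complexity} (existence of \emph{some} ordering) your choice is perfectly legitimate and yields a shorter, cleaner proof.
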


This key result is independent of the domain size and establishes a lower bound
for any $\prec$ because it is based on the optimal ordering $\prec^*$.
The next theorem derives the exact sampling distribution for any pair of
distributions $(\bp, \bq)$, which is useful for simulation studies
(e.g., Figure~\ref{fig:gof-bit-strings}) that characterize the power of the SRS.

\begin{theorem}
\label{thm:dist-r0-combinatorial}
The distribution of $R$ is given by
\begin{align}
\Prob{R = r} = \sum_{x \in \dom}H(x, m, r) \,\bq(x)
  \label{eq:dist-r0-combinatorial}
\end{align}
for $0 \le r \le m$,
where $H(x,m,r) \defas$
\begin{align*}
\;\begin{cases}
  \displaystyle\sum_{e=0}^{m} \Bigg\{
    \Bigg[ \sum_{j=0}^{e}
    % Can't use binompdf here, results in a bug since we need parenthesis
    % around (m-e)-(r-j) so just write the full equation.
    % \binompdf{r{-}j}{m{-}e}{\frac{\tilde\bp(x)}{1-\bp(x)}} \notag \\
    \displaystyle\binom{m-e}{r-j}
      \left[ \frac{\tilde\bp(x)}{1-\bp(x)} \right]^{r-j}
      \span \notag \\
    \quad \displaystyle\left[ 1-\frac{\tilde\bp(x)}{1-\bp(x)} \right]^{(m-e)-(r-j)}
      \left(\frac{1}{e+1}\right) \Bigg] \span \notag \\
    \quad \displaystyle\binompdf{m}{e}{\bp(x)} \Bigg\}
    & \mathrm{if}\, 0\,{<}\,\bp(x)\,{<}\,1, \\
  \displaystyle\binompdf{r}{m}{\tilde\bp(x)}
    & \mathrm{if}\, \bp(x)\,{=}\,0, \\[10pt]
  \displaystyle\frac{1}{m+1}
    & \mathrm{if}\, \bp(x)\,{=}\,1, \notag \\[10pt]
  \end{cases}
\end{align*}
and $\tilde\bp(x) \defas \sum_{x' \prec x}\bp(x)$ is the CDF of $\bp$.
\end{theorem}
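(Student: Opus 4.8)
The plan is to compute $\Prob{R=r}$ by conditioning on the value of $X_0$. Since $X_0 \sim \bq$, we have $\Prob{R=r} = \sum_{x\in\dom}\Prob{R=r \mid X_0 = x}\,\bq(x)$, so it suffices to show that $H(x,m,r) = \Prob{R = r \mid X_0 = x}$ for each $x\in\dom$. Fix $x$. Conditioned on $X_0 = x$, each $X_j$ ($1\le j\le m$) independently falls into one of three categories: strictly below $x$ (probability $\tbp(x) \defas \sum_{x'\prec x}\bp(x')$), equal to $x$ (probability $\bp(x)$), or strictly above $x$ (probability $1 - \tbp(x) - \bp(x)$). The rank $R = \sum_{j=1}^m \bigl(\indicator{X_j \prec x} + \indicator{X_j = x, U_j < U_0}\bigr)$ receives a deterministic $+1$ from each $X_j \prec x$, a stochastic $+1$ from each $X_j = x$ with $U_j < U_0$, and nothing from each $X_j \succ x$.

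The three cases in the definition of $H$ correspond to whether the ``tie'' category has positive probability. First I would dispatch the degenerate cases. If $\bp(x) = 0$, there are no ties almost surely, so $R = \sum_j \indicator{X_j \prec x}$ is a sum of $m$ i.i.d.\ Bernoulli$(\tbp(x))$ variables, giving the binomial pmf $\binompdf{r}{m}{\tbp(x)}$. If $\bp(x) = 1$, then $\tbp(x) = 0$ and all $X_j = x$ almost surely; here $R = \#\{j : U_j < U_0\}$, and by exchangeability of $U_0, U_1, \dots, U_m$ the rank of $U_0$ among $m+1$ i.i.d.\ uniforms is uniform on $[m+1]$, giving $1/(m+1)$. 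For the main case $0 < \bp(x) < 1$, I would introduce $E = \#\{j : X_j = x\}$, the number of ties; conditioned on $X_0 = x$, we have $E \sim \mathsf{Binomial}(m, \bp(x))$, contributing the factor $\binompdf{m}{e}{\bp(x)}$. Conditioned further on $\{E = e\}$, the contribution to $R$ splits as $R = A + B$ where $A$ counts the $X_j \prec x$ among the $m - e$ non-tied draws and $B$ counts the tied draws with $U_j < U_0$. Given $E = e$, each of the $m - e$ non-tied draws is independently $\prec x$ with conditional probability $\tbp(x)/(1 - \bp(x))$ (renormalizing away the tie event), so $A \sim \mathsf{Binomial}\bigl(m - e,\ \tbp(x)/(1-\bp(x))\bigr)$. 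Independently, $B = \#\{j \text{ tied} : U_j < U_0\}$ is, by exchangeability of $U_0$ with the $e$ uniforms attached to tied draws, uniform on $\{0, 1, \dots, e\}$, contributing the factor $1/(e+1)$. Writing $R = r$ as $A = r - j$, $B = j$ summed over $j = 0, \dots, e$, and then summing over $e = 0, \dots, m$, yields exactly the displayed triple sum for $H(x,m,r)$.

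The main obstacle—really the only subtle point—is justifying that $B$, the count of ties ranked below $X_0$ via the tie-breaking uniforms, is \emph{uniform} on $\{0,\dots,e\}$ and \emph{independent} of $A$. Independence is immediate once we condition on $E = e$, since $A$ is a function of which non-tied draws fall below $x$ (a function of the $X_j$'s alone) while $B$ is a function of $U_0$ and the $U_j$'s for tied $j$ (and the $U$'s are drawn independently of the $X$'s). Uniformity of $B$ follows because, conditioned on $E = e$ and on the identities of the $e$ tied indices, the variables $U_0$ together with those $e$ tied $U_j$'s are $e+1$ i.i.d.\ $\mathsf{Uniform}(0,1)$ draws, so the number of them strictly less than $U_0$ equals the rank of $U_0$ among $e+1$ exchangeable continuous variables, which is uniform on $\{0,1,\dots,e\}$; ties among the $U$'s occur with probability zero and can be ignored. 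Everything else is bookkeeping: collecting the three independent factors (the binomial pmf for $E$, the binomial pmf for $A$, and the uniform pmf for $B$), summing over the decomposition $r = (r-j) + j$ and over $e$, and finally averaging over $x$ against $\bq$.
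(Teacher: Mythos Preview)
Your proposal is correct and follows essentially the same route as the paper's proof: condition on $X_0=x$, split into the three cases according to $\bp(x)$, and in the main case condition on the tie count $E$, decompose $R$ into the number of strictly-below draws (your $A$, the paper's $L$) plus the number of tie-breaking wins (your $B$, same notation in the paper), identify these as Binomial$\bigl(m-e,\tbp(x)/(1-\bp(x))\bigr)$ and uniform on $\{0,\dots,e\}$ respectively, and sum. Your discussion of the independence of $A$ and $B$ given $E=e$ is slightly more explicit than the paper's, but the argument is otherwise the same.
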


% !TEX root=./main.tex

\section{Examples}
\label{sec:example}

%!TEX root=../main.tex

\begin{figure}[!b]
\centering
\vspace{-.25cm}
\includegraphics[width=.975\linewidth]{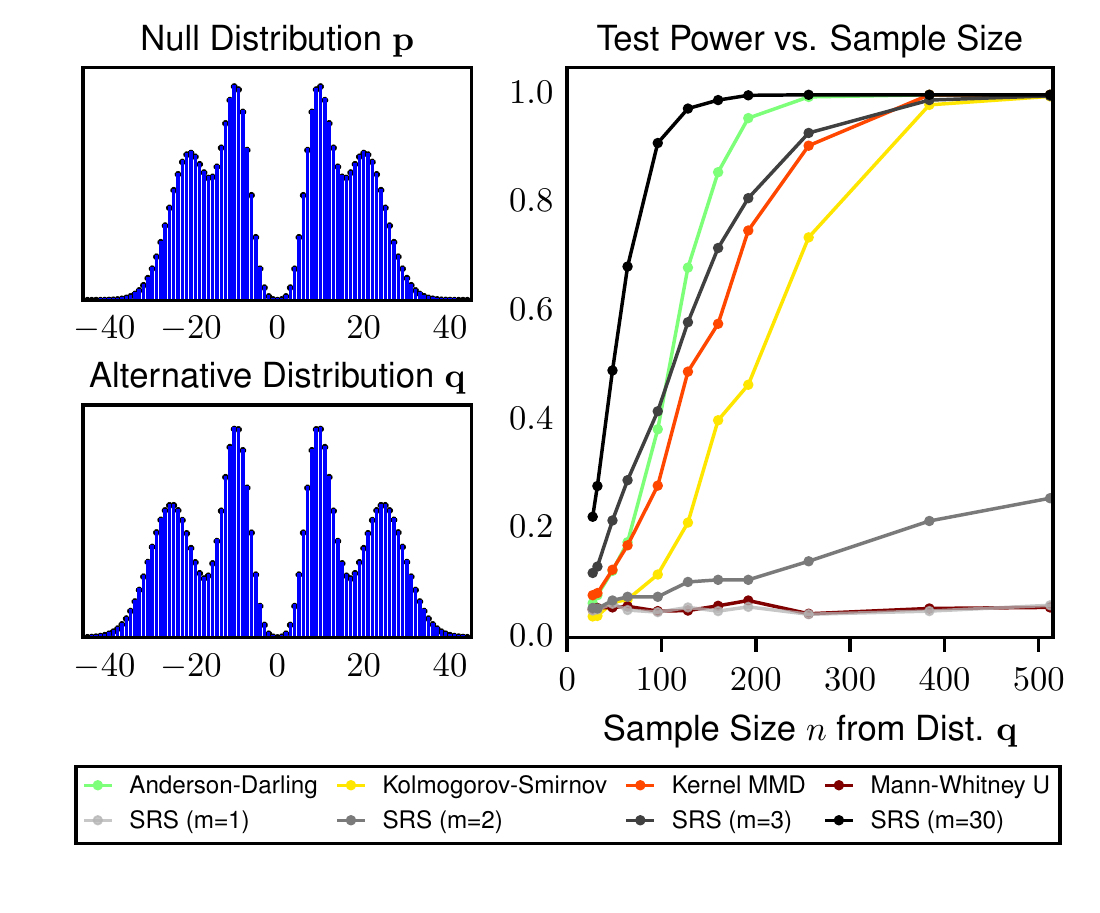}
\captionsetup{skip=0pt}
\caption{The left panel shows a pair $(\bp, \bq)$ of reflected, bimodal Poisson
distributions with slight location shift. The right plot compares the power of
testing $\bp\,{=}\,\bq$ using the SRS (for various choices of $m$) to several
baseline methods.}
\label{fig:poisson}
\end{figure}

%!TEX root=../main.tex

\begin{figure*}[!htb]

% \begin{subfigure}{\linewidth}
% \begin{subfigure}[b]{.325\linewidth}
% \includegraphics[width=\linewidth]{}%
% \end{subfigure}%
% \begin{subfigure}[b]{.675\linewidth}
% \includegraphics[width=\linewidth]{}\hfill%
% \end{subfigure}
% \captionsetup{skip=0pt, belowskip=5pt, font=footnotesize}
% \subcaption{$\bp \defas \bp_{\rm ind} \quad\quad \bq \defas w \bp_{\rm xor} + (1-w)\bp_{\rm ind}$}
% \label{subfig:gof-bit-strings-xor2}
% \end{subfigure}

% \begin{subfigure}{\linewidth}
% \begin{subfigure}[b]{.325\linewidth}
% \includegraphics[width=\linewidth]{}%
% \end{subfigure}%
% \begin{subfigure}[b]{.675\linewidth}
% \includegraphics[width=\linewidth]{}\hfill%
% \end{subfigure}
% \captionsetup{skip=0pt, belowskip=5pt, font=footnotesize}
% \subcaption{$\bp \defas \bp_{\rm ind} \quad\quad \bq \defas w \bp_{\rm not} + (1-w)\bp_{\rm ind}$}
% \label{subfig:gof-bit-strings-alternating}
% \end{subfigure}

\begin{subfigure}{\linewidth}
\begin{subfigure}[b]{.325\linewidth}
\includegraphics[width=\linewidth]{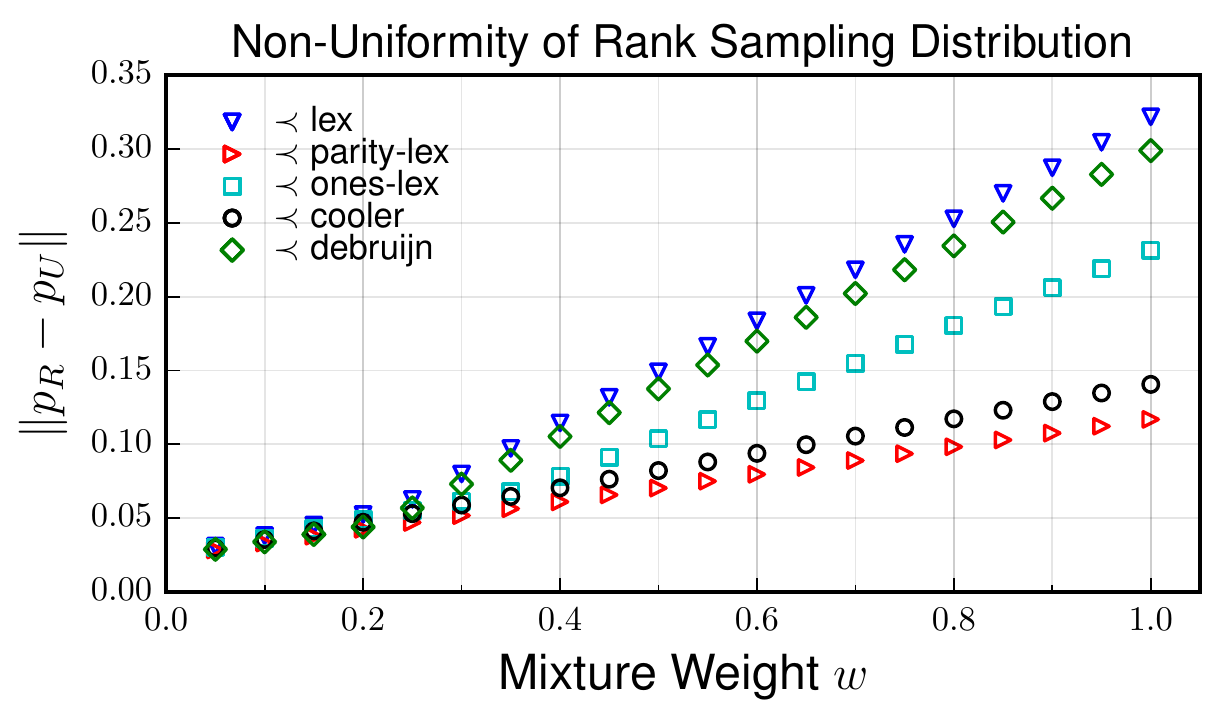}%
\end{subfigure}%
\begin{subfigure}[b]{.675\linewidth}
\includegraphics[width=\linewidth]{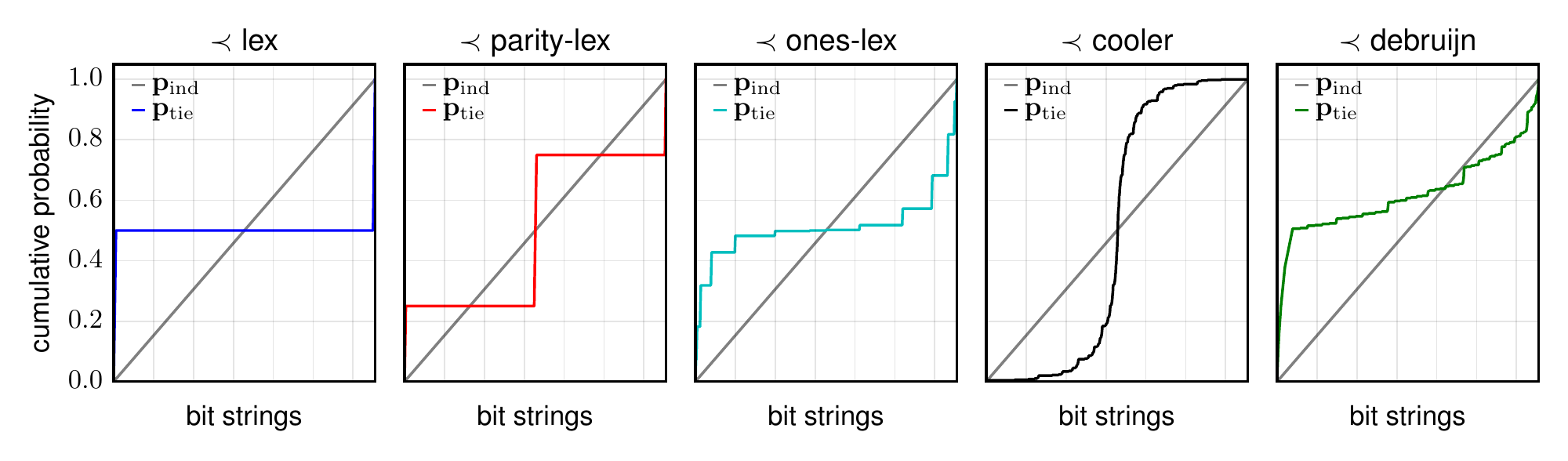}\hfill%
\end{subfigure}
\captionsetup{skip=0pt, belowskip=5pt, font=footnotesize}
\subcaption{$\bp \defas \bp_{\rm ind} \quad\quad \bq \defas w \bp_{\rm tie} + (1-w)\bp_{\rm ind}$}
\label{subfig:gof-bit-strings-tied}
\end{subfigure}

\begin{subfigure}{\linewidth}
\begin{subfigure}[b]{.325\linewidth}
\includegraphics[width=\linewidth]{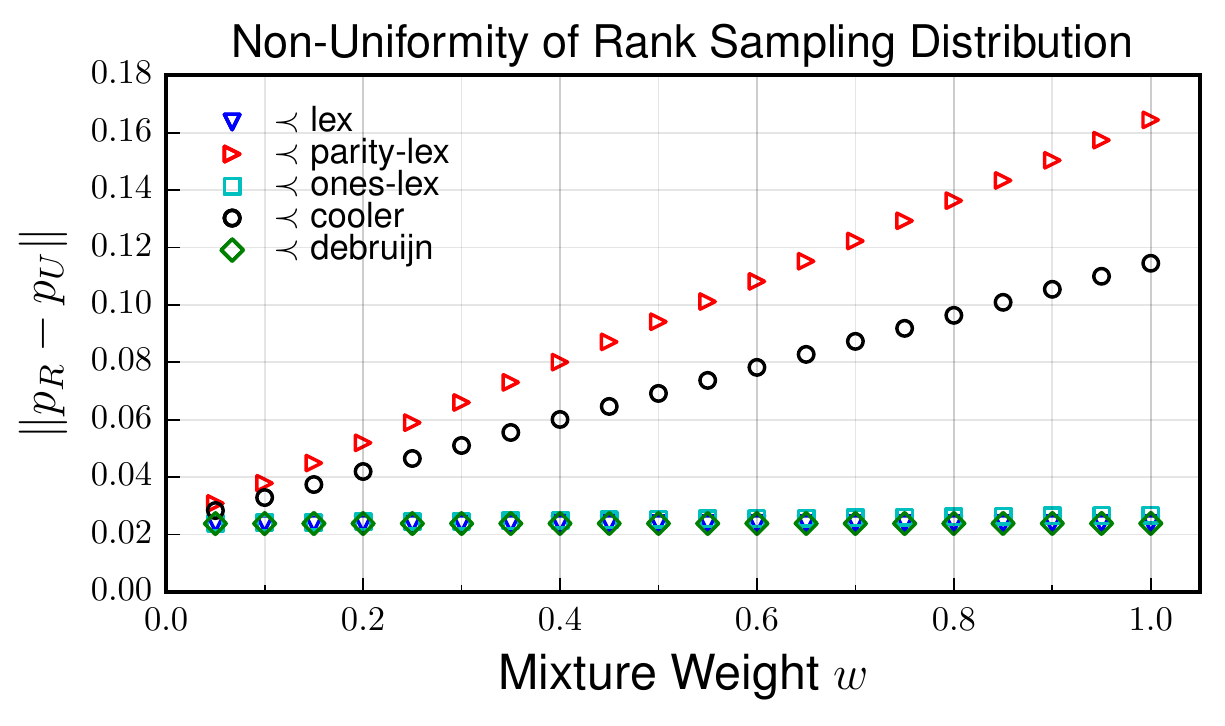}%
\end{subfigure}%
\begin{subfigure}[b]{.675\linewidth}
\includegraphics[width=\linewidth]{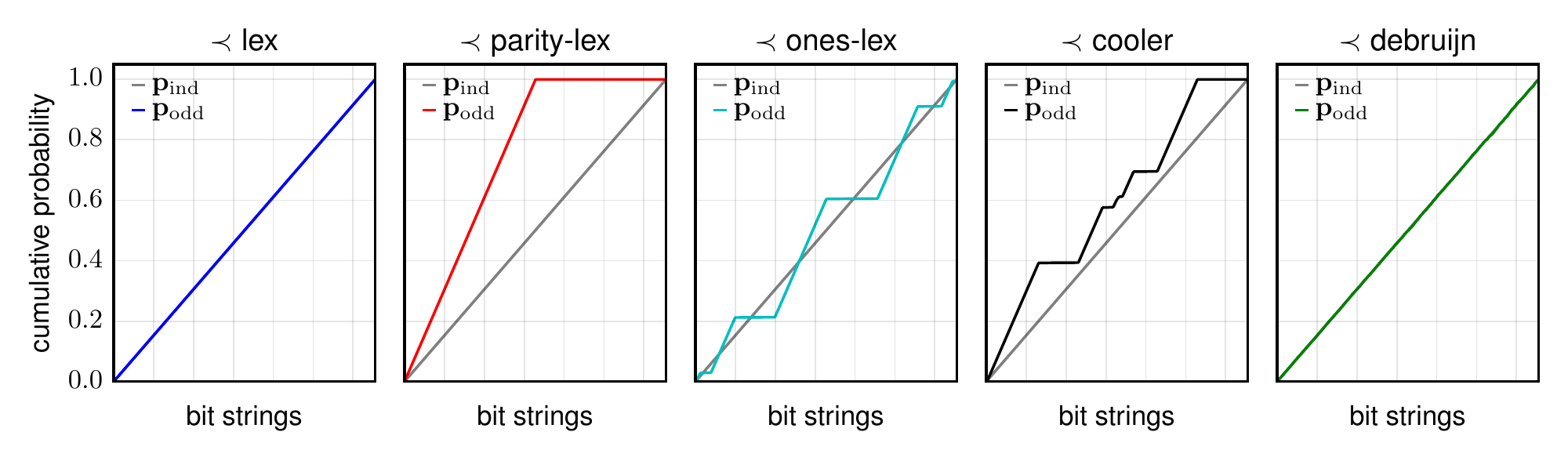}\hfill%
\end{subfigure}
\captionsetup{skip=0pt, belowskip=5pt, font=footnotesize}
\subcaption{$\bp \defas \bp_{\rm ind}\quad\quad \bq \defas w \bp_{\rm odd} + (1-w)\bp_{\rm ind}$}
\label{subfig:gof-bit-strings-odd}
\end{subfigure}

\captionsetup{skip=2pt}
\caption{In each of the two panels
\subref{subfig:gof-bit-strings-tied} and
\subref{subfig:gof-bit-strings-odd}, the left plot shows the sup-norm distance
between the sampling distribution of the rank statistic and the discrete uniform
(using Eq.~\ref{eq:dist-r0-combinatorial} in
Theorem~\ref{thm:dist-r0-combinatorial}), for a uniform null $\bp \defas
\bp_{\rm ind}$ on $\set{0,1}^{16}$ against alternative distributions of the form
$\bq \defas w\bp_{\rm alt} + (1-w)\bp_{\rm ind}$, for increasing mixture
weight $0 \le w \le 1$ and six different orderings on the binary strings. The
right plot compares the cumulative distribution function of the null
distribution (diagonal line in gray) with the cumulative distribution functions
of the alternative distribution (when $w=1$) as obtained by sorting the binary
strings according to each ordering. Orderings which induce a greater distance
between the cumulative distribution functions of the null and alternative
distributions result in more power to detect the alternative.}
\label{fig:gof-bit-strings}
\end{figure*}

%!TEX root=../main.tex

\begin{figure*}[ht]
\captionsetup[subfigure]{skip=0pt}
\begin{subfigure}[b]{.675\linewidth}
\includegraphics[width=\linewidth]{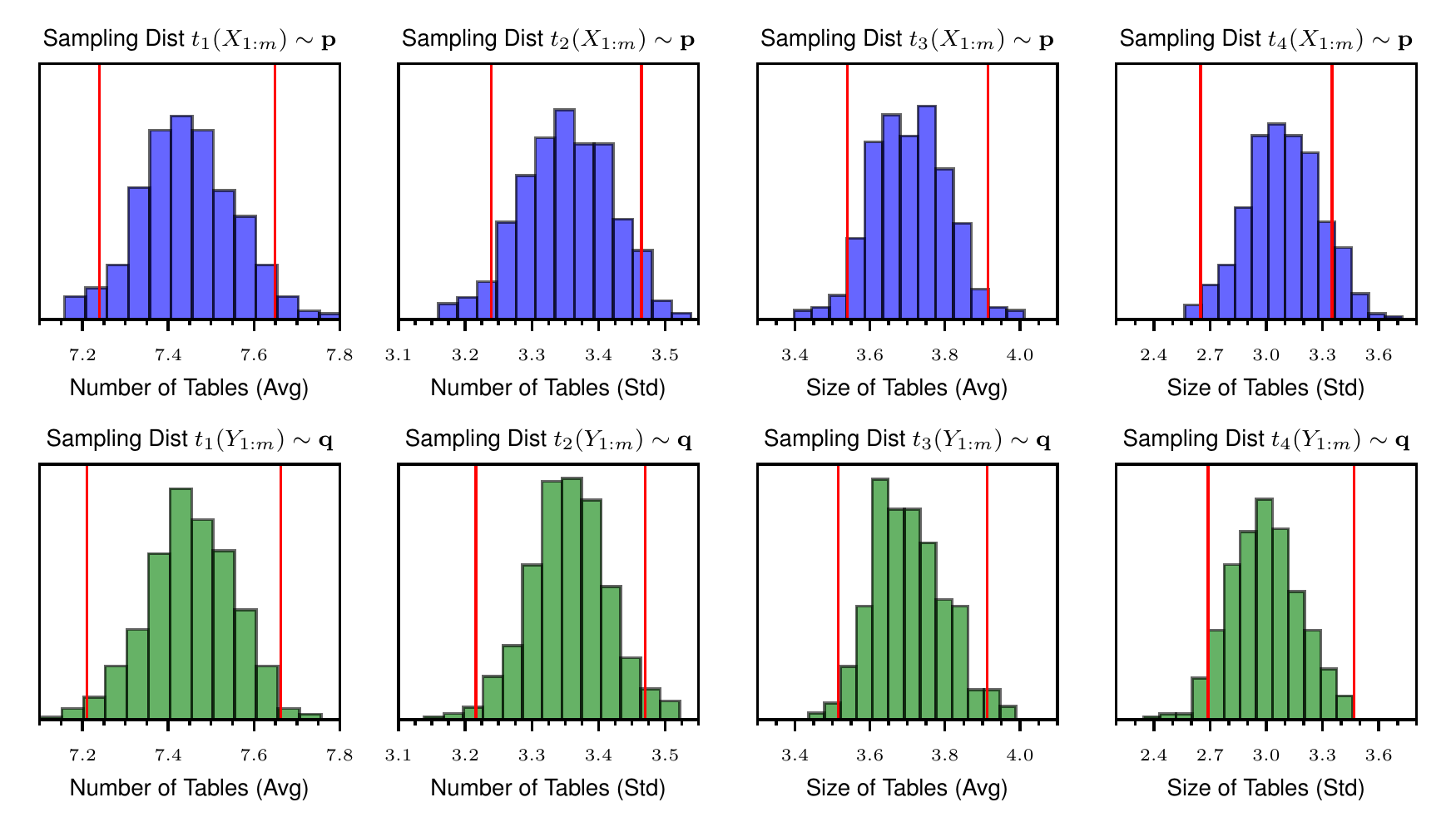}
\caption{Sampling distribution of four different probe statistics $\set{t_1,
t_2, t_3, t_4}$ of a dataset of partitions, as sampled from $\bp$ (Eq.~\eqref{eq:crp-dist-p}; blue) and from $\bq$ (Eq.~\eqref{eq:crp-dist-q}; green) estimated by
Monte Carlo simulation. Vertical red lines indicate 2.5\% and 97.5\% quantiles. Even
though $\bp\,{\ne}\,\bq$, the distributions of these statistics are aligned in such
a way that a statistic $t_j(Y_{1:m})\sim\bq$ is unlikely to appear as an extreme
value in the sampling distribution of the corresponding statistic
$t_j(X_{1:m})\sim\bp$, which leads to under-powered resampling-based tests.}
\label{subfig:pitman-yor-probe}
\end{subfigure}\hfill%
\begin{subfigure}[b]{.27\linewidth}
\includegraphics[width=\linewidth]{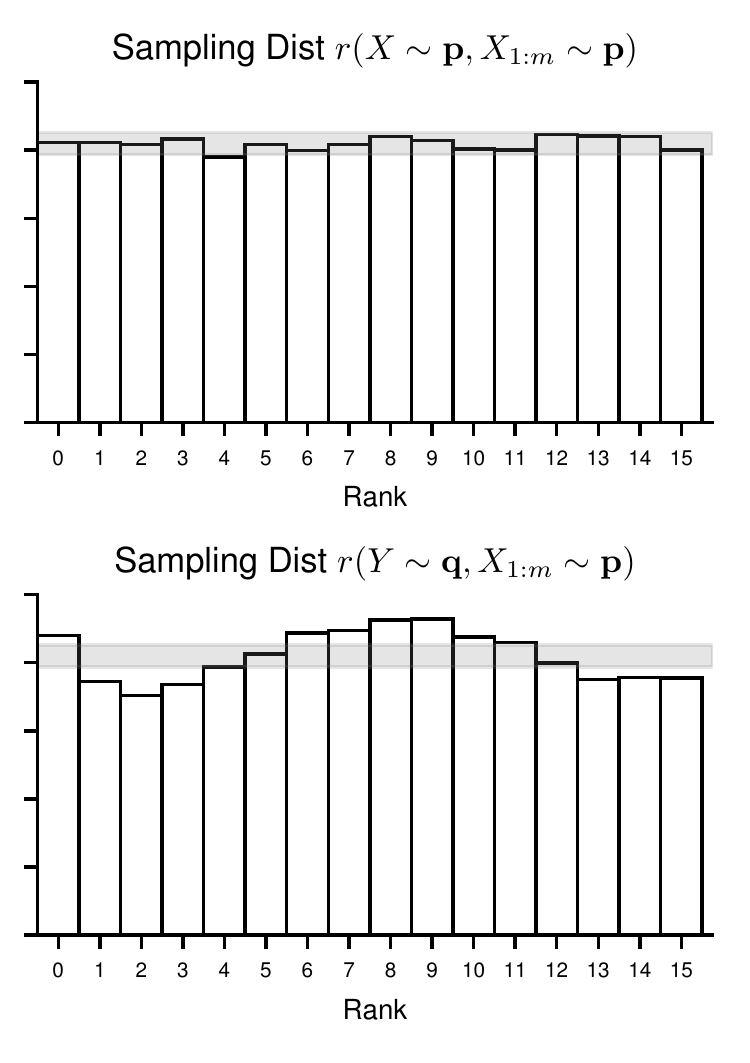}
% \vspace{.cm}
\caption{Monte Carlo simulation of the rank statistic illustrates its
significant uniform distribution under the null hypothesis (top) and
significant non-uniform distribution under the alternative hypothesis (bottom).}
\label{subfig:pitman-yor-rank}
\end{subfigure}

\caption{Comparison of the sampling distribution of \subref{subfig:pitman-yor-probe}
various bootstrapped probe statistics~\citep{good2004} with
\subref{subfig:pitman-yor-rank} the stochastic rank statistic, for
goodness-of-fit testing the Chinese restaurant processes on $N\,{=}\,20$ customers.
Discussion in main text.}
\label{fig:pitman-yor}
\end{figure*}

We now apply the proposed test to a countable domain and two
high-dimensional finite domains, illustrating a power comparison and how
distributional differences can be detected when the number of observations is
much smaller than the domain size. We use Pearson chi-square to assess
uniformity of the SRS for Algorithm~\ref{alg:gof-uniform-rank}
(see \citet{steele2006} for alternative ways to test for a uniform null).

\subsection{Bimodal, Symmetric Poisson}
\label{subsec:example-poisson}

We first investigate the performance of the SRS for testing a pair of
symmetric, multi-modal distributions over the integers with location shift.
In particular, for $x\,{\in}\,\mathbb{Z}$, define distribution
$
\mathbf{f}(x; \lambda_1, \lambda_2) \defas \frac{1}{2}\left(
    \frac{1}{2} \mathrm{Poisson}(|x|; \lambda_1)
        + \frac{1}{2} \mathrm{Poisson}(|x|; \lambda_2)
    \right)
$.
Note $\mathbf{f}$ is a mixture of Poisson distributions with rates
$\lambda_1$ and $\lambda_2$, reflected symmetrically about $x\,{=}\,0$. We set
$\bp(x) \defas \mathbf{f}(x; 10,20)$ and $\bq \defas \mathbf{f}(x; 10,
25)$ so that $\bq$ is location-shifted in two of the four modes
(Figure~\ref{fig:poisson}, left panel).

The right plot of Figure~\ref{fig:poisson} compares the power for various sample
sizes $n$ from $\bq$ according to the SRS ($m\,{=}\,1,2,3,30$, shown in
increasing shades of gray) and several baselines (shown in color).
The baselines (AD, MMD, KS, and Mann--Whitney U) are used
to assess goodness-of-fit by performing a two-sample test on $n$ samples
from $\bq$ with samples drawn i.i.d.\ from $\bp$.
The power (at level $\alpha\,{=}\,0.05$) is estimated as the fraction of correct
answers over 1024 independent trials.
The Mann--Whitney U, which is also based on rank statistics with
a correction for ties, has no power for all $n$ as it can only detect median
shift, as does the SRS with $m\,{=}\,1$ (see
Corollary~\ref{corr:total-order-no-symmetry}).
The SRS becomes non-uniform for $m\,{=}\,2$ although this choice results in low
power. The SRS with $m\,{=}\,3$ has comparable power to the AD and MMD tests.
The SRS with $m\,{=}\,30$ is the most powerful, although it requires more
computational effort and samples from $\bp$
(Algorithm~\ref{alg:gof-uniform-rank} scales as $O(mn)$).

\subsection{Binary strings}
\label{subsec:example-binary-strings}

Let $\dom \defas \set{0,1}^k$ be the set of all length $k$ binary strings.
Define the following distributions to be uniform over all strings $x =
(x_1,\dots,x_k) \in \set{0,1}^k$ which satisfy the given predicates:
\begin{align*}
\bp_{\rm ind} &:  ~\mbox{uniform on all strings}, \\
\bp_{\rm odd} &:  ~\textstyle\sum_{i=1}^{k}x_i \equiv 1\, (\mathrm{mod}\, 2),\\
\bp_{\rm tie} &:  ~x_1 = x_2 = \cdots = x_{k/2}.
\end{align*}
Each of these distributions assigns marginal probability $1/2$ to each bit $x_i$
(for $1\le i \le k$), so all deviations from the uniform distribution $\bp_{\rm
ind}$ are captured by higher-order relationships. The five orderings used for
comparing binary strings are
\begin{align*}
\prec_{\rm lex}&: ~\mbox{Lexicographic (dictionary) ordering}, \\
\prec_{\rm par}&: ~\mbox{Parity of ones, ties broken using} \prec_{\rm lex}, \\
\prec_{\rm one}&: ~\mbox{Number of ones, ties broken using } \prec_{\rm lex}, \\
\prec_{\rm coo}&: ~\mbox{Cooler ordering (randomly generated) \citep{stevens2012coolest}}, \\
\prec_{\rm dbj}&: ~\mbox{De Bruijn sequence ordering}.
\end{align*}
We set the null distribution $\bp \defas\bp_{\rm ind}$ and construct
alternative distributions $\bq \defas w\bp_{\rm c} +  (1-w)\bp_{\rm
ind}$ as mixtures of $\bp_{\rm ind}$ with the other two distributions, where
$w \in [0, 1]$ and $c \in \set{\mathrm{odd}, \mathrm{not}}$. We take bit
strings of length $k = 16$ with $n = 256$ observations so that $\card{\dom} =
65,536$ and $0.4\%$ of the domain size is observed.

Figure~\ref{fig:gof-bit-strings} shows how the non-uniformity of the SRS
(computed using Theorem~\ref{thm:dist-r0-combinatorial}) varies
for each of the two alternatives and five orderings ($m\,{=}\,6$). Each ordering
induces a different CDF over $\set{0,1}^k$ for the alternative distribution,
shown in the right panel for $w\,{=}\,1$. Orderings with a greater maximum
vertical distance between the null and alternative CDF attain greater rank
non-uniformity. No single ordering is more powerful than all others in both test
cases. However, in each case, some ordering detects the difference even at low
weights $w$, despite the sparse observation set.

The alternative $\bq = \bp_{\rm odd}$ in Figure~\ref{subfig:gof-bit-strings-odd}
is especially challenging: in a sample, all substrings (not necessarily
contiguous) of a given length $j < k$ are equally likely.
Even though the SRS is non-uniform for all orderings, the powers vary
significantly. For example, comparing strings using $\prec_{\rm lex}$ does not
effectively distinguish between $\bp_{\rm ind}$ and $\bp_{\rm odd}$, as strings
with an odd number of ones are lexicographically evenly interspersed within the
set of all strings. The parity ordering (which is optimal for this alternative)
and the randomly generated cooler ordering have increasing power as $w$
increases.

\subsection{Partition testing}
\label{subsec:example-partition-testing}

We next apply the SRS to test distributions on the space of partitions of the
set $\set{1,2,\dots,N}$. Let $\Pi_N$ denote the set of all such partitions. We
define a distribution on $\Pi_N$ using the two-parameter Chinese Restaurant
Process (CRP) \citep[Section~5.1]{buntine2010}. Letting
$(x|y)_N \defas (x)(x+y)\cdots(x+(N-1)y)$, the probability of a partition
$\pi \defas \set{\pi_1,\dots,\pi_k} \in \Pi_N$ with $k$ tables (blocks) is
given by
\begin{align*}
\pitmanyor(\pi ; a, b) \defas \begin{cases}
  \frac{(b{|}a)_k}{(b{|}1)_N} \prod_{i=1}^{k}(1-a)_{c_k-1} & (\mbox{if }a > 0)\\[10pt]
  \frac{b^k}{(b{|}1)_N} \prod_{i=1}^{k}(c_k-1)! & (\mbox{if }a = 0),
  \end{cases}
\end{align*}
where $c_i$ is the number of customers (integers) at table $\pi_i$
($1\,{\le}i\,{\le}\,k$).
Simulating a CRP proceeds by sequentially
assigning customers to tables \citep[Def.~7]{buntine2010}.
Even though we can compute the probability of any partition, the cardinality of
$\Pi_N$ grows exponentially in $N$ (e.g., $\abs{\Pi_{20}} \approx
5.17\,{\times}\,{10}^{13}$). The expected frequency of any partition is
essentially zero for sample size $n\,{\ll}\,\abs{\Pi_{N}}$, so Pearson
chi-square or likelihood-ratio tests on the raw data are inappropriate.
Algorithm~\ref{alg:partition-ordering} defines a total order on the
partition domain $\Pi_N$.

%!TEX root=../main.tex

\begin{algorithm}[ht]
\scriptsize
\captionsetup{font=footnotesize}
\caption{Total order $\prec$ on the set of partitions $\Pi_N$}
\label{alg:partition-ordering}
\begin{algorithmic}[1]
\algrenewcommand\algorithmicindent{1.0em}%
\Require{$\left\{ \begin{tabular}{l}
  Partition $\pi \defas \set{\pi_1,\pi_2,\dots,\pi_k} \in \Pi_N$
    with $k$ blocks.\\
  Partition $\nu \defas \set{\nu_1,\nu_2,\dots,\nu_l} \in \Pi_N$
    with $l$ blocks.
  \end{tabular}
  \right.$}
\Ensure{
  $\LT$ if $\pi \prec \nu$;
  $\GT$ if $\pi \succ \nu$;
  $\EQ$ if $\pi  = \nu$.
}
\If{$k < l$}
  \Return \LT \Comment{$\nu$ has more blocks}
  \EndIf
\If{$k > l$}
  \Return \GT \Comment{$\pi$ has more blocks}
  \EndIf
\State $\tilde{\pi} \gets$ blocks of $\pi$ sorted by value of least element in the block
\State $\tilde{\nu} \gets$ blocks of $\nu$ sorted by value of least element in the block
\For{$b=1,2,\dots,l$}
  \If{$\abs{\tilde{\pi}_b} < \abs{\tilde{\nu}_b}$}
    \Return \LT \Comment{$\tilde{\nu}_b$ has more elements}
    \EndIf
  \If{$\abs{\tilde{\pi}_b} > \abs{\tilde{\nu}_b}$}
    \Return \GT \Comment{$\tilde{\pi}_b$ has more elements}
    \EndIf
  \State $\pi'_b \gets$ values in $\tilde\pi_b$ sorted in ascending order
  \State $\nu'_b \gets$ values in $\tilde\nu_b$ sorted in ascending order
  \For{$i=1,2,\dots, \abs{\pi'_b}$}
    \If{$\pi'_{b,i} < \nu'_{b,i}$}
      \Return \LT \Comment{$\pi'_b$ has smallest element}
      \EndIf
    \If{$\pi'_{b,i} > \nu'_{b,i}$}
      \Return \GT \Comment{$\nu'_b$ has smallest element}
      \EndIf
  \EndFor
\EndFor
\State \Return \EQ
\end{algorithmic}
\end{algorithm}

We consider the following pair of distributions:
\begin{align}
\bp &\defas \pitmanyor(0.26, 0.76)/2 + \pitmanyor(0.19, 5.1)/2
  \label{eq:crp-dist-p}\\
\bq &\defas \pitmanyor(0.52, 0.52). \label{eq:crp-dist-q}
\end{align}
These distributions are designed to ensure that partitions from $\bp$ and $\bq$
have similar distributions on the number and sizes of tables.
Figure~\ref{subfig:pitman-yor-probe} shows a comparison of using Monte Carlo
simulation of various bootstrapped probe statistics for assessing
goodness-of-fit versus using the SRS with the ordering in
Algorithm~\ref{alg:partition-ordering}.

In Figure~\ref{subfig:pitman-yor-probe}, each probe statistic takes a size $m$
dataset $X_{1:m}$ (where each $X_i$ is a partition) and produces a numerical
summary such as the average of the number of tables in each sample. A resampling
test \citep{good2004} that uses these probe statistics will report (with high
probability) that an observed statistic $t(Y_{1:m}) \sim \bq$ drawn from the
alternative distribution is a non-extreme value in the null distribution
$t(X_{1:m}) \sim \bp$ (as indicated by alignment of their quantiles, shown in
red) and will therefore have insufficient evidence to reject $\bp = \bq$.

On the other hand, Figure~\ref{subfig:pitman-yor-rank} shows that when ranked
using the ordering obtained from Algorithm~\ref{alg:partition-ordering} (which
is based on a multivariate combination of the univariate probe statistics in
Figure~\ref{subfig:pitman-yor-probe} specified procedurally), a partition $Y
\sim \bq$ is more likely to lie in the center of a dataset $X_{1:m}
\simiid \bp$, as illustrated by the non-uniform rank distribution under the
alternative hypothesis (the gray band shows 99\% variation for a uniform
histogram). By comparing the top and bottom panels of
Figure~\ref{subfig:pitman-yor-rank}, the SRS shows that partitions from $\bq$
have a poor fit with respect to partitions from $\bp$, despite their agreement
on multiple univariate summary statistics shown in
Figure~\ref{subfig:pitman-yor-probe}.

% !TEX root=./main.tex

\section{Applications}
\label{sec:applications}

We next apply the proposed test to assess the sample
quality of random data structures obtained from approximate sampling algorithms over
combinatorially large domains with intractable probabilities.

\subsection{Dirichlet process mixture models}
\label{subsec:applications-dpmm}
%!TEX root=../main.tex

\begin{figure}[!htb]
\includegraphics[width=\linewidth]{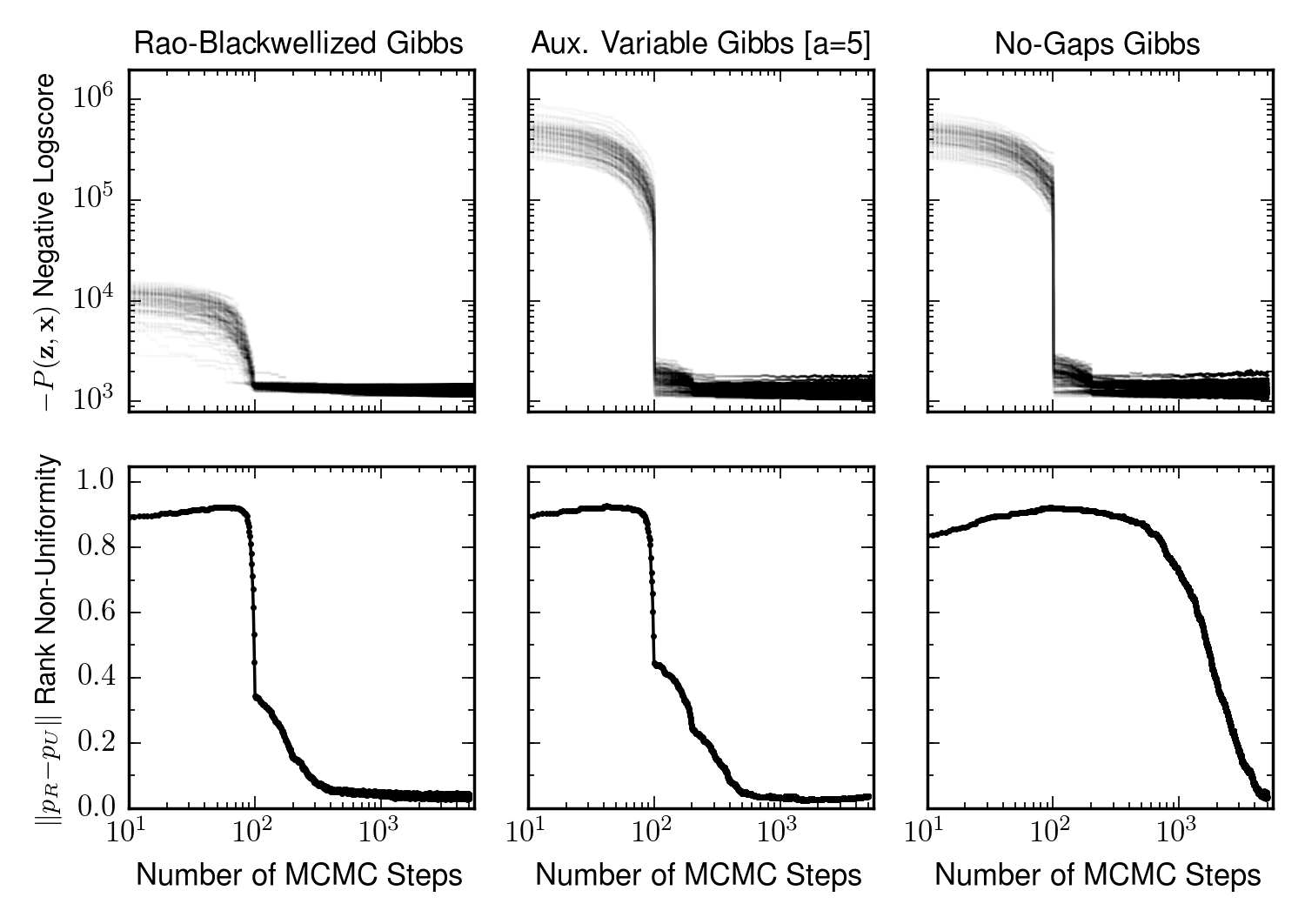}
\caption{The uniformity of the SRS (bottom row) captures convergence behavior of
MCMC sampling algorithms for Dirichlet process mixture models that are not
captured by standard diagnostics such as the logscore (top row).}
\label{fig:dpmm-gibbs}
\end{figure}

%!TEX root=../main.tex

\begin{figure*}[ht]
\captionsetup[subfigure]{skip=0pt}
\begin{subfigure}{.5\linewidth}
\includegraphics[width=\linewidth]{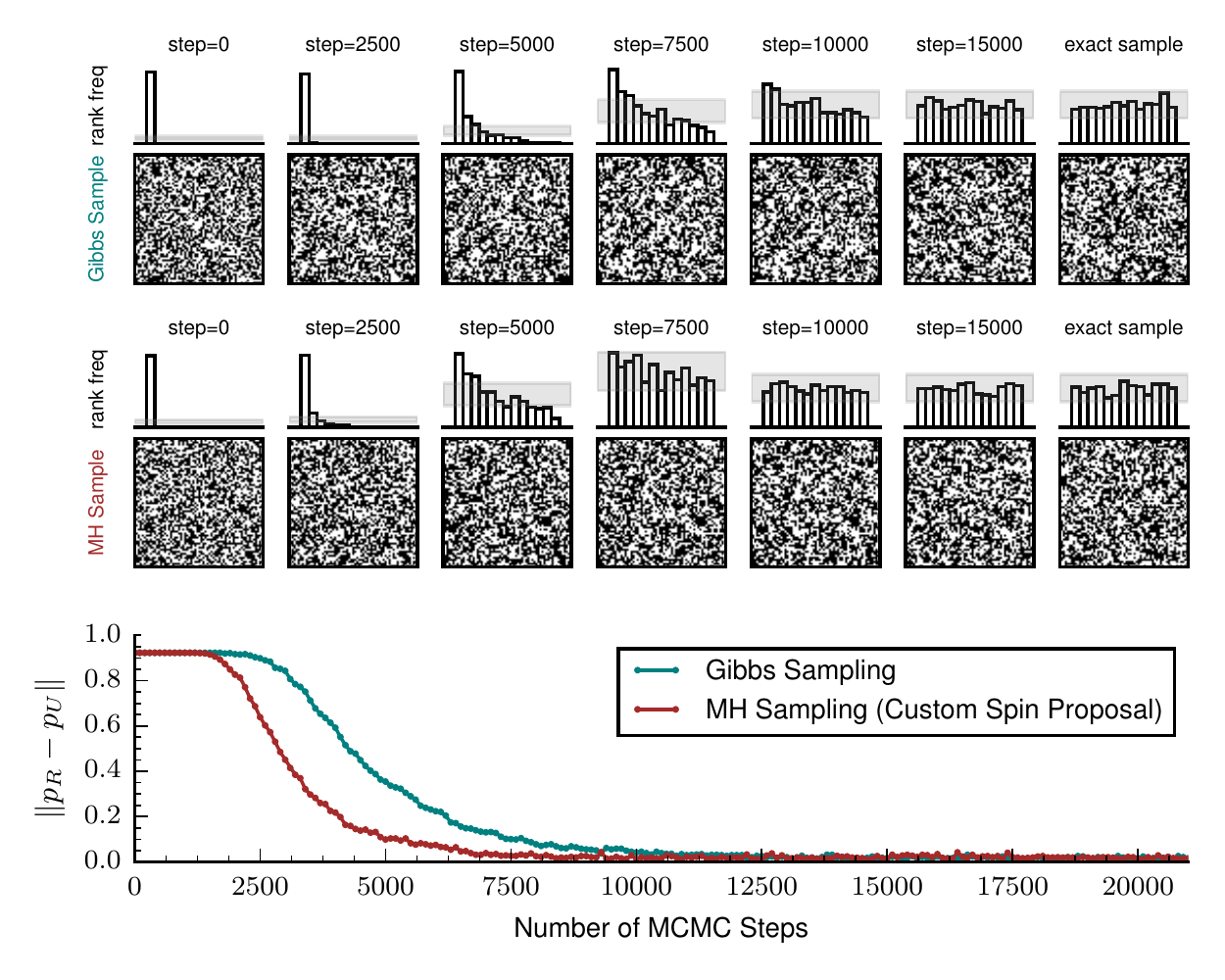}
\subcaption{Temperature $T = 8$}
\label{subfig:ising-t8}
\end{subfigure}
\begin{subfigure}{.5\linewidth}
\includegraphics[width=\linewidth]{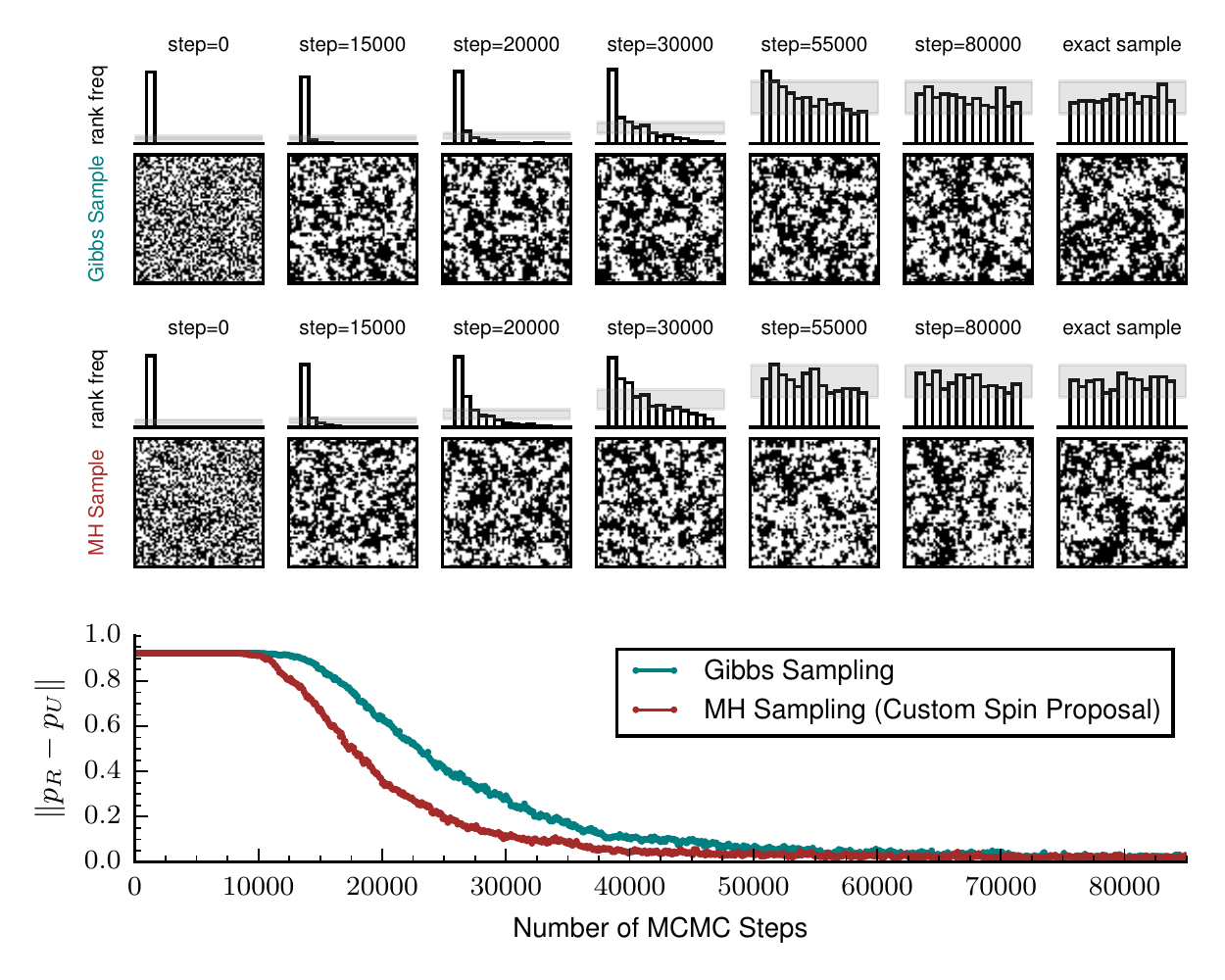}
\subcaption{Temperature $T = 3$}
\label{subfig:ising-t3}
\end{subfigure}%
\caption{Assessing the goodness-of-fit of approximate samples of a $64 \times
64$ Ising model for Gibbs sampling and Metropolis--Hastings sampling
(with the custom spin proposal from \citep{mackay2002}) at two
temperatures using the SRS. In both cases, the SRS converges to its uniform
distribution more rapidly for samples obtained from MH than for those from Gibbs sampling.}
\label{fig:ising}
\end{figure*}

The recent paper \citet{talts2018} describes simulation-based calibration (SBC),
a procedure for validating samples from algorithms that can generate posterior
samples for a hierarchical Bayesian model. More specifically, for a prior
$\pi(z)$ over the parameters $z$ and likelihood function $\pi(x|z)$ over data
$x$, integrating the posterior over the joint distribution returns the prior
distribution:
\begin{align}
\pi(z) = \int
  \left[\pi(z | x')\pi(x'|z')\mathrm{d}x'\right]
  \pi(z')\mathrm{d}z'.
  \label{eq:data-avg-posterior}
\end{align}
Eq.~\eqref{eq:data-avg-posterior} indicates that by simulating $n$ datasets
$\set{x_1,\dots,x_n}$ i.i.d.\ from the marginal distribution, samples
$\set{\hat{z}_1,\dots,\hat{z}_n}$ (where $z_i\,{\approx}\,\pi(z|x_i)$) from an
approximate posterior should be i.i.d.\ samples from the prior $\pi(z)$.
An approximate sampler can be thus be diagnosed by performing a goodness-of-fit
test to check whether $\hat{z}_{1:n}$ are distributed according to $\pi$.
Ranks of univariate marginals of a continuous parameter
vector $z\,{\in}\,\mathbb{R}^d$
are used in \citet{talts2018}. We extend SBC to handle discrete latent
variables $z$ taking values in a large domain.

We sampled $n\,{=}\,1000$ datasets $\set{x_1,\dots,x_n}$
independently from a Dirichlet process mixture model. Each dataset $x_i$ has
$k\,{=}\,100$ observations and each observation is five-dimensional
(i.e., $x_i\,{\in}\,\mathbb{R}^{k\times5}$) with a Gaussian likelihood.
From SBC, samples $\hat{z}_{1:n}$ (where $z_i\,{\in}\,\Pi_{k}$ and
$|\Pi_{k}|\,{\approx}\,10^{115})$ of the mixture assignment vector should be
distributed according to the CRP prior $\pi(z)$.
The top row of Figure~\ref{fig:dpmm-gibbs} shows trace plots of the logscore
(unnormalized posterior) of approximate samples from Rao--Blackwellized Gibbs,
Auxiliary Variable Gibbs, and No-Gaps Gibbs samplers (Algorithms 3, 8, and 4 in
\citet{neal2000}). Each line corresponds to an independent run of MCMC
inference. The bottom row shows the evolution of the uniformity of the SRS using
$m\,{=}\,64$ and the ordering on partitions from
Algorithm~\ref{alg:partition-ordering}.

While logscores typically stabilize after 100 MCMC steps (one epoch through all
observations in a dataset) and suggest little difference across the three
samplers, the SRS shows that Rao-Blackwellized Gibbs is slightly more efficient
than Auxiliary Variable Gibbs and that the sample quality from No-Gaps Gibbs is
inferior to those from the other two algorithms up until roughly $5,000$ steps.
These results are consistent with the observation from \citet{neal2000} that
No-Gaps has inefficient mixing (it excessively rejects proposals on singleton
clusters).

\subsection{Ising models}
\label{subsec:applications-ising}

In this application we use the SRS to assess the sample quality of approximate
Ising model simulations. For a ferromagnetic $k \times k$ lattice with
temperature $T$, the probability of a spin configuration $x \in
\set{-1, +1}^{k \times k}$ is
\begin{align}
P(x) \propto \exp{\left(-1/T\textstyle\sum_{i,j}x_ix_j\right)}.
\label{eq:ising}
\end{align}
While Eq.~\eqref{eq:ising} is intractable to compute for any $x$ due to the
unknown normalization constant, coupling-from-the-past \citep{propp1996} is a
popular MCMC technique which can tractably obtain exact samples from the Ising
model. For a $64 \times 64$ Ising model (domain size $2^{64\times{64}}$), we
obtained 650 exact samples using coupling-from-the-past, and used these
``ground-truth'' samples to assess the goodness-of-fit of approximate samples
obtained via Gibbs sampling and Metropolis--Hastings sampling (with a custom
spin proposal \citep[Section~31.1]{mackay2002}).

For each temperature $T\,{=}\,3$ and $T\,{=}\,8$, we obtained 7,800 approximate
samples using MH and Gibbs. The first two rows of Figure~\ref{fig:ising} each
show the evolution of one particular sample (Gibbs, top; MH, bottom). Two exact
samples are shown in the final column of each panel. All approximate and exact
samples are independent of one another, obtained by running parallel Markov
chains. The SRS of the exact samples with respect to the approximate samples was
taken at checkpoints of 100 MCMC steps, using $m\,{=}\,12$ and an ordering based
on the Hamiltonian energy, spin magnetization, and connected components. SRS
histograms (and $99\%$ variation bands) evolving at various steps are shown
above the Ising model renderings.

The SRS is non-uniform (including in regimes where the difference between
approximate and exact samples is too fine-grained to be detected visually) at
early steps and more uniform at higher steps. The plots show that MH is a more
efficient sampler than Gibbs at moderate temperatures, as its sample quality
improves more rapidly. This characteristic was conjectured in
\citet{mackay2002}, which noted that the MH sampler ``has roughly double the the
probability of accepting energetically unfavourable moves, so may be a more
efficient sampler [than Gibbs]''. In addition, the plots suggest that the
samples become close to exact (in terms of their joint energy, magnetization,
and connected components characteristics) after 20,000 steps for $T\,{=}\,8$ and
100,000 steps for $T\,{=}\,3$, even though obtaining exact samples using
coupling-from-the-past requires between 500,000 and 1,000,000 MCMC steps for
both temperatures.

% !TEX root=./main.tex

\section{Conclusion}
\label{sec:conclusion}

This paper has presented a flexible, simple-to-implement, and consistent
goodness-of-fit test for discrete distributions.
The test statistic is based on the ranks of observed samples with respect to new
samples from the candidate distribution.
The key insight is to compute the ranks
using an ordering on the domain that is able to detect differences in properties
of interest in high dimensions.
Unlike most existing statistics, the SRS is distribution-free and has a simple
exact sampling distribution.
Empirical studies indicate that the SRS is a valuable addition to the
practitioner's toolbox for assessing sample quality in regimes which are not
easily handled by existing methods.

% !TEX root=./main.tex

\section*{Acknowledgments}
\label{sec:acks}

The authors thank the anonymous referees for their helpful feedback.
This research was supported by the DARPA SD2 program (contract
\mbox{FA8750-17-C-0239}); the Ethics and Governance of Artificial
Intelligence Initiative of the MIT Media Lab and
Harvard's Berkman Klein Center; the
Systems That Learn Initiative of MIT CSAIL; and an anonymous philanthropic gift.

\bibliography{main.bib}

\clearpage
\appendix

% !TEX root=./main.tex

\section{Appendix: Proofs}
\label{sec:proofs}

\subsection{Uniformity of rank}
\label{subsec:proposed-statistic-uniform}

Throughout this appendix,
let $\dom$ be a non-empty finite or countably infinite set,
let $\prec$ be a total order on $\dom$ (of any order type),
and let $\bp$ and $\bq$ each be a probability distribution on $\dom$.
For $n\in\Naturals$, let $[n]$ denote the set $\set{0,1,2,\dots,n-1}$.

Given a positive integer $m$, define the following random variables:
\begin{align}
X_0 &\sim \bq
  \label{eq:proc-sim-xq-appx} \\
U_0 &\sim \mathsf{Uniform}(0,1)
  \label{eq:proc-sim-uq-appx} \\
X_1, X_2, \dots, X_m &\simiid \bp
  \label{eq:proc-sim-xp-appx} \\
U_1, U_2, \dots, U_m &\simiid \mathsf{Uniform}(0,1)
  \label{eq:proc-sim-up-appx} \\
R = \textstyle \sum_{j=1}^m \indicator{X_j \prec X_0}
  &+ \indicator{X_j = X_0, U_j < U_0}.
  \label{eq:proc-stochastic-rank-appx}
\end{align}

Our first main result is the following, which establishes necessary and
sufficient conditions for uniformity of the rank statistic.

%%%%%%%%%%%%%% %%%%%%%%%%%%%% %%%%%%%%%%%%%% %%%%%%%%%%%%%%
\vspace*{5pt}
\begin{theorem}[Theorem~\ref{thm:uniformity} in the main text]
\label{thm:uniformity-appx}
We have $\bp\,{=}\,\bq$ if and only if for all $m\,{\ge}\,1$, the rank statistic
$R$ is uniformly distributed on $\mbox{$[m+1]\,{\defas}\,\set{0,1,\dots,m}$}$.
\end{theorem}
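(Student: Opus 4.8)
The plan is to prove the two directions separately, with the forward direction ($\bp=\bq \Rightarrow R$ uniform) being the short one and the converse carrying all the weight. For the forward direction, when $\bp=\bq$ the pairs $(X_0,U_0),(X_1,U_1),\dots,(X_m,U_m)$ are i.i.d.\ draws from the same distribution on $\dom\times[0,1]$, and the lexicographic order $\vlt$ on $\dom\times[0,1]$ (first coordinate by $\prec$, ties broken by the usual order on $[0,1]$) is almost surely a strict total order on these $m+1$ points: two points collide only if they agree in both coordinates, and since the $U_j$ are continuous this happens with probability zero. Then $R$ is exactly the number of the $m$ points $(X_j,U_j)$ that are $\vlt$-below $(X_0,U_0)$, i.e.\ the rank of a uniformly random element among $m+1$ exchangeable points, which is uniform on $\{0,1,\dots,m\}$ by symmetry. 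This is the classical argument alluded to after the theorem statement.

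For the converse, I would prove the contrapositive: if $\bp\neq\bq$, then there exists some $m\geq 1$ for which $R$ is not uniform on $[m+1]$. Actually the cleanest route is to establish the stronger claim directly and then invoke it — but since Theorem~\ref{thm:non-uniform-all-m} (which gives an ordering making $R$ non-uniform at $m=1$) and Theorem~\ref{thm:p-ne-q-finitely-many-m} are stated later and logically depend on this theorem, I should instead give a self-contained argument here. The key object is the quantity $q(A) \defas \bq(A)$ versus $p(A)\defas\bp(A)$ for down-sets (initial segments) $A$ of $(\dom,\prec)$. Let $\tbp$ and $\tbq$ denote the induced CDFs, $\tbp(x)=\sum_{x'\preceq x}\bp(x')$ and similarly $\tbq$. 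Since $\bp\neq\bq$, there is at least one $x$ with $\bp(x)\neq\bq(x)$, and by taking the $\prec$-least such $x$ one gets a point where $\tbp$ and $\tbq$ disagree; hence $\sup_x|\tbp(x)-\tbq(x)| > 0$. I would then compute $\Prob{R=0}$ as a function of $m$: conditioning on $X_0=x$ and on $U_0$, the event $R=0$ requires every $X_j$ to be $\prec$-below $x$, or equal to $x$ with $U_j\geq U_0$; integrating out $U_0\sim\mathsf{Uniform}(0,1)$ gives a clean closed form. If $R$ were uniform on $[m+1]$ for \emph{every} $m\geq 1$, then $\Prob{R=0}=1/(m+1)$ for all $m$, and matching this against the computed expression for two different values of $m$ (say $m$ and $m+1$, or $m=1,2$) forces an algebraic identity on the distributions that can only hold if $\tbp=\tbq$ pointwise, i.e.\ $\bp=\bq$ — a contradiction.

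I expect the main obstacle to be the bookkeeping in the converse: handling ties correctly (the contribution of atoms $x$ with $\bp(x)>0$ that also satisfy $\bq(x)>0$), dealing with the countably-infinite case where one must argue that the relevant sums converge and that "the $\prec$-least point of disagreement" or an approximating finite truncation exists even when $(\dom,\prec)$ has a complicated order type (e.g.\ order type of $\mathbb{Q}$, with no least element overall but still with the property that disagreement of the measures forces disagreement of the CDFs on some initial segment — this needs the observation that $\bp,\bq$ are countably additive). The cleanest way to control this is probably to work with $\Prob{R \le r}$ or with the generating-function / moment identity: if $R$ is uniform for all $m$ then $\expect{z^R}=\frac{1}{m+1}\frac{z^{m+1}-1}{z-1}$ for all $m$, and on the other hand $\expect{z^R}=\sum_x \bq(x)\,\expect{(z^{\text{something involving }\tbp(x),\bp(x)})}$ using the explicit distribution of $R$ (which is essentially Theorem~\ref{thm:dist-r0-combinatorial}, though I would re-derive the pieces I need rather than cite it to avoid circularity). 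Equating coefficients or evaluating at well-chosen $z$ and letting $m$ vary should pin down $\tbp=\tbq$. The subtlety is ensuring this final step is a genuine implication and not merely "generically true," which is why the stochastic tie-breaking — rather than a fixed or Bernoulli($1/2$) rule, cf.\ Example~\ref{example:uniform-bernoulli-tie-break} — is essential and must be used at exactly the point where we integrate over $U_0$.
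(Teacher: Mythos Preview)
Your forward direction is correct and matches the paper's argument exactly: the pairs $(X_i,U_i)$ are i.i.d.\ under the lexicographic order $\vlt$, ties have probability zero by continuity of the $U_i$, and uniformity of the rank follows from exchangeability (the paper isolates this as Lemma~\ref{lem:uniformity-rank-statistics}).

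For the converse, the paper takes a route quite different from yours. It embeds $(\dom,\prec)$ into $(\Rationals\cap(0,1),<)$, defines the two-variable CDF $F_\bp(x,u)=\tbp(x)+u\,\bp(x)$ on $[0,1]^2$, and argues analytically: since $R$ is uniform on $[m+1]$ for every $m$, the rescaled rank $R/m$ converges in distribution to $\mathsf{Uniform}(0,1)$; but $R/m$ is also the empirical CDF $\hat F_\bp^m$ evaluated at $T_0=(X_0,U_0)$, which converges a.s.\ to $F_\bp(T_0)$ by Glivenko--Cantelli. Hence $F_\bp(T_0)\sim\mathsf{Uniform}(0,1)$, i.e.\ $F_\bq(F_\bp^{-1}(s))=s$ for all $s$, and after constructing a generalized inverse of $F_\bp$ and checking right-continuity this yields $F_\bp=F_\bq$ and thus $\bp=\bq$. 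Your moment-matching route can be made to work and is arguably more elementary: conditioning on $T_0$ gives $\Prob{R=r}=\expect{\binom{m}{r}F_\bp(T_0)^r(1-F_\bp(T_0))^{m-r}}$, so uniformity for all $m,r$ says the law of $F_\bp(T_0)$ integrates every Bernstein polynomial to $1/(m+1)$, forcing it to be Lebesgue by density of polynomials in $C[0,1]$ --- landing at the same conclusion $F_\bp(T_0)\sim\mathsf{Uniform}(0,1)$ without Glivenko--Cantelli.

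There is, however, a genuine gap in your sketch. Your claim that matching $\Prob{R=0}$ at ``two different values of $m$ (say $m$ and $m+1$, or $m=1,2$)'' forces $\tbp=\tbq$ is false as stated: two moments of a measure on $[0,1]$ do not determine it, and there is no two-term recursion here that collapses the infinite family of constraints. You need the full sequence $m=1,2,3,\dots$ (as your later generating-function remark implicitly concedes), and you should state the density/uniqueness argument explicitly rather than gesture at ``algebraic identities.'' Second, both routes reduce to ``$F_\bp(T_0)$ is uniform when $X_0\sim\bq$ implies $\bp=\bq$,'' and you underestimate this step: $F_\bp$ need not be injective, the order type of $(\dom,\prec)$ can be arbitrary (so ``$\prec$-least point of disagreement'' may not exist, as you note), and deducing $\tbp=\tbq$ requires either the generalized-inverse machinery the paper builds or an equivalent argument you have not supplied.
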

%%%%%%%%%%%%%% %%%%%%%%%%%%%% %%%%%%%%%%%%%% %%%%%%%%%%%%%%

Before proving Theorem~\ref{thm:uniformity-appx}, we state and prove several
lemmas. We begin by showing that an i.i.d.\ sequence yields a uniform rank
distribution.

%%%%%%%%%%%%%% %%%%%%%%%%%%%% %%%%%%%%%%%%%% %%%%%%%%%%%%%%
\vspace*{5pt}
\begin{lemma}
%[Identically distributed implies uniform rank distribution]
\label{lem:uniformity-rank-statistics}
Let $T_0, T_1, \dots, T_m$ be an i.i.d.\ sequence of random variables.
If $\Prob{T_i = T_j} = 0$ for all distinct $i$ and $j$, then the rank
statistics $S_i \defas \sum_{j=0}^{m} \mathbb{I}[T_j \plt T_i]$
for $0 \le i \le m$ are each uniformly distributed on $[m+1]$.
\end{lemma}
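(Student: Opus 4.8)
The plan is to exploit the exchangeability of an i.i.d.\ sequence together with the observation that, in the absence of ties, $(S_0,\dots,S_m)$ is almost surely a permutation of $[m+1]$, and then average over coordinates.

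First I would dispose of ties. Since $\Prob{T_i = T_j} = 0$ for each of the finitely many pairs $i \ne j$, a union bound shows that almost surely all of $T_0,\dots,T_m$ are distinct. On this event $\plt$ linearly orders the finite set $\set{T_0,\dots,T_m}$, so there is a unique permutation $\sigma$ of $\set{0,1,\dots,m}$ with $T_{\sigma(0)} \plt T_{\sigma(1)} \plt \cdots \plt T_{\sigma(m)}$, and then $S_i = \sigma^{-1}(i)$. In particular $(S_0,\dots,S_m)$ is almost surely a bijection onto $[m+1]$; note that since only finitely many points are involved, the order type of $\plt$ plays no role here.

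Next, the key symmetry step. Fix any permutation $\tau$ of $\set{0,1,\dots,m\}$ and re-index: $\sum_{j=0}^m \indicator{T_{\tau(j)} \plt T_{\tau(i)}} = \sum_{k=0}^m \indicator{T_k \plt T_{\tau(i)}} = S_{\tau(i)}$, so the vector of rank statistics computed from $(T_{\tau(0)},\dots,T_{\tau(m)})$ is exactly $(S_{\tau(0)},\dots,S_{\tau(m)})$. But $(T_{\tau(0)},\dots,T_{\tau(m)})$ has the same law as $(T_0,\dots,T_m)$ because the $T_i$ are i.i.d., hence $(S_{\tau(0)},\dots,S_{\tau(m)}) \stackrel{\mathrm d}{=} (S_0,\dots,S_m)$. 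Taking $\tau$ to send $0$ to $i$ gives $S_i \stackrel{\mathrm d}{=} S_0$, so all the $S_i$ are identically distributed.

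Finally I would combine the two facts. Fix $r \in [m+1]$. The almost-sure permutation property gives $\sum_{i=0}^m \indicator{S_i = r} = 1$ a.s., so taking expectations yields $\sum_{i=0}^m \Prob{S_i = r} = 1$. Since the $m+1$ summands are all equal by the previous paragraph, each equals $1/(m+1)$, which is exactly the claim that every $S_i$ is uniform on $[m+1]$. I do not expect a genuine obstacle in this argument; the only points needing a little care are the union bound reducing to the tie-free event and checking that on that event the ranks really do form a bijection onto $[m+1]$.
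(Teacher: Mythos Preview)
Your proposal is correct and follows essentially the same approach as the paper: both use exchangeability of the i.i.d.\ sequence to deduce that the $S_i$ are identically distributed, use the no-ties hypothesis to conclude that $(S_0,\dots,S_m)$ is almost surely a bijection onto $[m+1]$, and then sum $\Prob{S_i=r}$ over $i$ to obtain $1/(m+1)$. Your write-up is in fact more explicit than the paper's in justifying why the rank vector inherits exchangeability from the $T_i$ and why the ranks form a permutation on the tie-free event.
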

%%%%%%%%%%%%%% %%%%%%%%%%%%%% %%%%%%%%%%%%%% %%%%%%%%%%%%%%

\begin{proof}
Since $T_0, T_1, \ldots, T_m$ is i.i.d., it is a finitely exchangeable sequence,
and so the rank statistics $S_0, \dots, S_m$ are identically (but not
independently) distributed.

Fix an arbitrary $k \in [m+1]$. Then $\Prob{S_i = k} = \Prob{S_j = k}$ for all
$i, j \in [m+1]$. By hypothesis, $\Prob{T_i = T_j} = 0$ for distinct $i$ and
$j$. Therefore the rank statistics are almost surely distinct, and the events
$\set{S_i = j}$ (for $0\le i \le m$) are mutually exclusive and exhaustive.
Since these events partition the outcome space, their probabilities sum to $1$,
and so $\Prob{S_i=k} = 1/(m+1)$ for all $i \in [m+1]$.

Because $k$ was arbitrary, $S_i$ is uniformly distributed on $[m+1]$ for all
$i\in [m+1]$.
\end{proof}

We will also use the following result about convergence of discrete uniform
variables to a continuous uniform random variable.

%%%%%%%%%%%%%% %%%%%%%%%%%%%% %%%%%%%%%%%%%% %%%%%%%%%%%%%%
\vspace*{5pt}
\begin{lemma}
\label{lem:discrete-uniform-continuous}
Let $(V_m)_{m\ge 1}$ be a sequence of discrete random variables such that $V_m$
is uniformly distributed on $\set{0,1/m,2/m,\dots,1}$, and let $U$ be a
continuous random variable uniformly distributed on the interval $[0,1]$. Then
$(V_m)_{m\ge 1}$ converges in distribution to $U$, i.e.,
\begin{align}
\lim_{m \to \infty} \Prob{V_m < u} = \Prob{U < u} = u.
\label{eq:discrete-uniform-convergence}
\end{align}
for all $u \in [0,1]$.

Furthermore, the convergence \eqref{eq:discrete-uniform-convergence} is uniform
in $u$.
\end{lemma}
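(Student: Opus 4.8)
The plan is to obtain an exact closed form for the CDF of $V_m$ and then sandwich it between two affine functions of $u$ whose gap shrinks uniformly.

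First I would count atoms. Since $V_m$ places mass $1/(m+1)$ on each of the points $0, 1/m, 2/m, \dots, 1$, for any $u \in [0,1]$ we have
\[
\Prob{V_m < u}
  = \frac{1}{m+1}\,\card{\set{k \in \set{0,1,\dots,m} : k/m < u}}
  = \frac{\lceil mu\rceil}{m+1},
\]
where the last equality uses $0 \le mu \le m$, so the constraint $k \le m$ is never binding, together with $\lceil 0\rceil = 0$ to cover $u=0$. Next I would invoke the elementary bounds $mu \le \lceil mu\rceil < mu + 1$ to get, for every $u \in [0,1]$,
\[
\frac{mu}{m+1} \;\le\; \Prob{V_m < u} \;<\; \frac{mu+1}{m+1}.
\]
Subtracting $u$ from each term and simplifying the two endpoints yields $-\tfrac{u}{m+1} \le \Prob{V_m < u} - u < \tfrac{1-u}{m+1}$, and hence $\abs{\Prob{V_m < u} - u} \le \tfrac{1}{m+1}$.

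Finally, since the bound $\tfrac{1}{m+1}$ is independent of $u$ and tends to $0$ as $m\to\infty$, this simultaneously gives the pointwise limit $\lim_{m\to\infty}\Prob{V_m < u} = u$ for each fixed $u\in[0,1]$ — which is convergence in distribution to $U$ because $u\mapsto u$ is the continuous CDF of $U$ and so every point is a continuity point — and the uniform-in-$u$ strengthening asserted in the last sentence of the lemma.

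The only place that requires care is the atom count at the boundary: at $u = 1$ the count is $m$ (not $m+1$), which is consistent with $\lceil m\rceil/(m+1) = m/(m+1)$ and with $V_m$ having an atom at $1$; at $u=0$ the empty set gives $0 = \lceil 0\rceil/(m+1)$. Apart from this bookkeeping the argument is a one-line computation with no real obstacle; in particular no appeal to a general weak-convergence criterion is needed, since the explicit CDF formula does everything.
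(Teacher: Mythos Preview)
Your proof is correct and follows essentially the same approach as the paper: both compute the CDF of $V_m$ explicitly and show $\abs{F_m(u)-u}=O(1/m)$ uniformly in $u$. The paper writes $F_m$ piecewise on the intervals $[a/m,(a+1)/m)$ and observes that both $u$ and $F_m(u)$ lie in the same length-$1/m$ subinterval, whereas you package the same count via $\Prob{V_m<u}=\lceil mu\rceil/(m+1)$ and the standard ceiling bounds; your version is slightly tighter ($1/(m+1)$ versus $1/m$) and arguably cleaner, and it matches the strict inequality in the lemma statement more carefully, but the substance is the same.
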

%%%%%%%%%%%%%% %%%%%%%%%%%%%% %%%%%%%%%%%%%% %%%%%%%%%%%%%%

\begin{proof}
Let $\epsilon > 0$.
The distribution function $F_m$ of $V_m$ is given by
\begin{align*}
F_m(u) = \begin{cases}
%  0 & u < 0 \\
  1/(m+1) & u \in [0, 1/m) \\
  2/(m+1) & u \in [1/m, 2/m) \\
  \cdots \\
  (a+1)/(m+1) & u \in [a/m, (a+1)/m) \\
  \cdots \\
  m/(m+1) & u \in [(m-1)/m, 1) \\
  1 & u = 1.
%  1 & u \ge 1.
\end{cases}
\end{align*}

Observe that for $0 \le a < m$, the value $F_m(u)$ lies in the interval $[a/m,
(a+1)/m)$ since we have that
\mbox{$(a/m) < (a+1)/(m+1) < (a+1)/m$}. Since $u$ is also in this
interval, $\abs{F_m(u) - u} \le (a+1)/m - a/m = 1/m < \epsilon$ whenever $m >
1/\epsilon$, for all $u$.
\end{proof}

The following intermediate value lemma for step functions on the rationals is
straightforward. It makes use of sums defined over subsets of the rationals,
which are well-defined, as we discuss in the next remark.

%%%%%%%%%%%%%% %%%%%%%%%%%%%% %%%%%%%%%%%%%% %%%%%%%%%%%%%%
\vspace*{5pt}
\begin{lemma}
%[Intermediate value of step functions on the rationals]
\label{lem:intermediate-value-rats}
Let $p \colon (\Rationals \cap [0,1]) \to [0,1]$ be a function
satisfying $p(0) = 0$ and $\sum_{x \in \Rationals \cap [0,1]}p(x) = 1$.
Then for each $\delta \in (0,1)$, there is some $w \in \Rationals \cap [0,1]$
such that
\begin{align*}
\sum_{x \in \Rationals \cap (0,w)}p(x) \le \delta \le \sum_{x \in \Rationals \cap (0, w]}p(x).
\end{align*}
\end{lemma}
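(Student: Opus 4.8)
The plan is a Dedekind-cut / discrete intermediate-value argument on the cumulative sums of $p$. For $w \in \Rationals \cap [0,1]$ put $G(w) \defas \sum_{x \in \Rationals \cap (0,w)} p(x)$ and $\bar G(w) \defas \sum_{x \in \Rationals \cap (0,w]} p(x)$, so that $\bar G(w) = G(w) + p(w)$ and the desired conclusion reads: there is a rational $w$ with $G(w) \le \delta \le \bar G(w)$. Both sums are well-defined (by the remark to follow) as absolutely convergent sums of nonnegative terms, each bounded by $\sum_x p(x) = 1$. Since $w' < w$ forces $(0,w') \subseteq (0,w] \subseteq (0,w)\cup\{w\}$, both $G$ and $\bar G$ are nondecreasing in $w$, with $G(0) = \bar G(0) = 0$ (using $p(0)=0$) and $\bar G(1) = 1$.

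Next I would split the rationals according to whether the target $\delta$ has been ``overshot'' or ``not yet reached'': let $A \defas \set{w \in \Rationals\cap[0,1] : \bar G(w) < \delta}$ and $B \defas \set{w \in \Rationals\cap[0,1] : G(w) > \delta}$. If $A \cup B \ne \Rationals\cap[0,1]$, then any $w$ outside $A\cup B$ already satisfies $G(w) \le \delta \le \bar G(w)$ and we are done; so assume $A\cup B = \Rationals\cap[0,1]$. The sets are disjoint ($w\in A$ gives $G(w)\le \bar G(w) < \delta$), we have $0\in A$ and $1\notin A$, and monotonicity of $G,\bar G$ shows $A$ is downward closed and $B$ upward closed in $(\Rationals\cap[0,1],<)$. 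Hence there is a cut point $c \defas \sup A \in [0,1]$ with $A$ equal to the rationals below $c$ and $B$ to those above.

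I would then try to derive a contradiction, ruling out that such a clean cut can occur. If $c$ is rational, take rationals $w_k \downarrow c$: they lie in $B$, so $G(w_k) > \delta$, but $(0,w_k)\downarrow(0,c]$ and monotone convergence give $G(w_k)\to\bar G(c)$, whence $\bar G(c)\ge\delta$, contradicting $c\in A$ — and symmetrically $c\in B$ is impossible. This leaves the case where $c$ is irrational, which I expect to be the real obstacle: the same two limiting computations now only yield $\lim_{w\uparrow c}\bar G(w) = \lim_{w\downarrow c}G(w) = \sum_{x\in\Rationals\cap(0,c)}p(x) =: v$, with the left-hand values $<\delta$ and the right-hand values $>\delta$, which forces merely $v = \delta$ rather than a contradiction. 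To finish I would need to exclude an irrational $c$ at which the mass of $p$ accumulates from both sides with left-hand total exactly $\delta$; I would attempt this either by a finer analysis of where the support of $p$ leaves a gap near $c$ (so that a nearby rational $w$ has $G(w)$ or $\bar G(w)$ sitting exactly at $\delta$), or by using the specific structure of the $p$ that arises in the application (e.g.\ that it comes from embedding a countable linear order into $\Rationals$, an embedding that can be chosen so that two-sided accumulation points are rational). Granting that, $c$ is rational, the previous paragraph applies, and the rest — monotonicity, monotone convergence, and the case split — is routine bookkeeping.
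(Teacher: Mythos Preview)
Your Dedekind-cut argument is natural, and you have correctly located the genuine difficulty: the case where the cut point $c=\sup A$ is irrational. Unfortunately this case cannot be closed, because the lemma as stated is \emph{false}. Fix an irrational $c\in(0,1)$, choose rationals $q_n\uparrow c$ and $r_n\downarrow c$, and set $p(q_n)=p(r_n)=2^{-n-1}$ (and $p\equiv 0$ elsewhere). Then $p(0)=0$, $\sum_x p(x)=1$, and for $\delta=1/2$ one checks that every rational $w<c$ has $\bar G(w)<1/2$ (only finitely many $q_n$ lie in $(0,w]$) while every rational $w>c$ has $G(w)>1/2$ (all the $q_n$ and at least one $r_n$ lie in $(0,w)$). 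Hence no rational $w$ satisfies $G(w)\le\delta\le\bar G(w)$. This is precisely the two-sided accumulation at an irrational point that you flagged; neither of your proposed fixes (a gap in the support near $c$, or special structure of the embedding) is available under the stated hypotheses.

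The paper itself gives no proof of this lemma --- it merely calls it ``straightforward'' --- so there is no approach to compare against, and the paper has the same gap. What the paper actually \emph{uses} the lemma for, namely that $F_\bp(x,u)=\tilde\bp(x)+u\,\bp'(x)$ attains every value in $[0,1]$, is nevertheless true once $x$ is allowed to range over all of $[0,1]$ rather than only the rationals: for $s\in[0,1]$ set $x_0\defas\inf\set{x\in[0,1]:\sum_{y\le x}\bp(y)\ge s}$; right-continuity of the CDF gives $\sum_{y\le x_0}\bp(y)\ge s$, while $\tilde\bp(x_0)=\lim_{y\uparrow x_0}\sum_{z\le y}\bp(z)\le s$, so $s\in[\tilde\bp(x_0),\tilde\bp(x_0)+\bp'(x_0)]$ and a suitable $u$ yields $F_\bp(x_0,u)=s$. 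In the counterexample above this $x_0$ is the irrational $c$. So the right repair is not to your argument but to the lemma's statement: drop the requirement $w\in\Rationals$ (or, equivalently, prove surjectivity of $F_\bp$ directly on $[0,1]^2$), after which the standard quantile construction replaces the cut argument entirely.
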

%%%%%%%%%%%%%% %%%%%%%%%%%%%% %%%%%%%%%%%%%% %%%%%%%%%%%%%%

\vspace*{5pt}
\begin{remark}
%[Cauchy criterion for series indexed by rationals]
\label{rem:series-indexed-rats}
The infinite sums in Lemma~\ref{lem:intermediate-value-rats} taken over a subset
of the rationals can be formally defined as follows: Consider an arbitrary
enumeration $\set{q_1, q_2, \dots, q_n, \dots}$ of $\mathbb{Q} \cap [0,1]$, and
define the summation over the integer-valued index $n \ge 1$. Since the series
consists of positive terms, it converges absolutely, and so all rearrangements
of the enumeration converge to the same sum (see, e.g.,
\citet[Theorem~3.55]{rudin1976principles}).

One can show that the Cauchy criterion holds in this setting. Namely, suppose
that a sum $\sum_{a < x < c} p(x)$ of non-negative terms converges. Then for all
$\epsilon > 0$ there is some rational $b \in (a, c)$ such that
$\sum_{a < x \le b} p(x) < \epsilon$.
\end{remark}

We now prove both directions of Theorem~\ref{thm:uniformity-appx}.

%%%%%%%%%%%%%% %%%%%%%%%%%%%% %%%%%%%%%%%%%% %%%%%%%%%%%%%%
\begin{proof}[Proof of Theorem~\ref{thm:uniformity-appx}]
Because $\dom$ is countable, by a standard back-and-forth argument the total
order \mbox{$(\dom, \prec)$} is isomorphic to $(B, <)$ for some subset $B
\subseteq \Rationals \cap (0,1)$. Without loss of generality, we may therefore
take $\dom$ to be $\Rationals \cap [0,1]$ and assume that $\bp(0) = \bp(1) = 0$.

Consider the unit square $[0,1]^2$ equipped with the dictionary order
$\dictorder$. This is a total order with the least upper bound property. For
each $i\in [m+1]$, define $T_i \defas (X_i, U_i)$, which takes values in
$[0,1]^2$, and observe that the rank $R$ in Eq.~\eqref{eq:proc-stochastic-rank}
of Theorem~\ref{thm:uniformity-appx} is equivalent to the rank
$\sum_{i=0}^{m}\mathbb{I}[T_i \dictorder T_0]$ of $T_0$ taken according to the
dictionary order.

\paragraph{\bfseries (Necessity)} Suppose $\bp=\bq$. Then $T_0, \dots, T_m$ are
independent and identically distributed. Since $U_0, \dots, U_m$ are continuous
random variables, we have $\Prob{T_i = T_j} =0$  for all $i
\ne j$. Apply Lemma~\ref{lem:uniformity-rank-statistics}.

\paragraph{\bfseries (Sufficiency)} Suppose that for all $m > 0$, we have
that the rank $R$ is uniformly distributed on $\set{0,1,2,\dots,m}$. We begin
the proof by first constructing a distribution function $F_\bp$ on the unit
square and then establishing several of its properties. First let
$\tbp\colon [0,1] \to [0,1]$ be the ``left-closed right-open''
cumulative distribution function of $\bp$, defined by
\begin{align*}
\tilde{\bp}(x) &\defas \sum_{y \in \Rationals \cap [0,x)}\bp(y)
\end{align*}
for $x \in [0,1]$.
Define $\bp'$ to be the probability measure on $[0,1]$ that is equal to $\bp$ on
subsets of $\Rationals \cap [0,1]$ and is null elsewhere, and define the
distribution function $F_{\bp}\colon [0,1]^2 \to [0,1]$ on $S$ by
\begin{align*}
F_\bp(x, u) \defas \tbp(x) + u\bp'(x)
\end{align*}
for $(x,u) \in [0,1]^2$. To establish that $F_\bp$ is a valid distribution
function, we show that its range is $[0,1]$; it is monotonically non-decreasing
in each of its variables; and it is right-continuous in each of its variables.

It is immediate that $F_\bp(0, 0) = 0$ and $F_\bp(1, 1) = 1$. Furthermore, To
establish that $F_\bp$ is monotonically non-decreasing, put $x < y $ and
$ u < v$ and observe that
\begin{align*}
F_\bp(x, u)
  &= \tbp(x) + u\bp'(x) \\
  &\le \tbp(x) + \bp'(x) \\
  &\le \textstyle\sum_{z \in \Rationals \cap [0, y)} \bp'(z) \\
  &= \tbp(y) \\
  &\le F_\bp(y, u)
\shortintertext{and}
F_\bp(x, u)
  &= \tbp(x) + u\bp'(x) \\
  &\le \tbp(x) + v\bp'(x) \\
  &= F_\bp(x, v).
\end{align*}

We now establish right-continuity. For fixed $x$, $F_\bp(x,u)$ is a linear
function of $u$ and so continuity is immediate. For fixed $u$, we have shown
that $F_\bp(x,u)$ is non-decreasing so it is sufficient to show that for any $x$
and for any $\epsilon > 0$ there exists $x' > x$ such that
\begin{align*}
\epsilon &> F(x', u)  - F(x,u) \\
  &= \tbp(x') + u \bp'(x') - \tbp(x) - u \bp(x) \\
  &= \tbp(x') + u \bp'(x') - \tbp(x) - u \bp(x) \\
  &= \sum_{y \in \mathbb{Q}\cap[x,x']} \bp(y),
\end{align*}
which is immediate from the Cauchy criterion.

Finally, we note that Lemma~\ref{lem:intermediate-value-rats} and the continuity
of $F_\bp$ in $u$ together imply that $F_\bp$ obtains all intermediate values,
i.e., for any $\delta \in [0,1]$ there is some $(x, u)$ such that
$F(x,u) = \delta$.

Next define the inverse $F_\bp^{-1}\colon [0,1] \to [0,1]^2$ by
\begin{align}
F_\bp^{-1}(s) \defas \inf\set{(x,u) \mid F_\bp(x,u) = s}
\label{eq:generalized-inverse-function}
\end{align}
for $s \in [0,1]$, where the infimum is taken with the respect to the dictionary
order $\dictorder$. The set in Eq~\eqref{eq:generalized-inverse-function} is
non-empty since $F_\bp$ obtains all values in $[0,1]$. Moreover, $F_\bp^{-1}(s)
\in [0,1]^2$ since $\dictorder$ has the least upper bound property. (This
``generalized'' inverse is used since $F_{\bp}$ is one-to-one only under the
stronger assumption that $\bp(x) > 0$ for all $x \in \Rationals \cap (0,1)$.)
Analogously define $F_\bq$ in terms of $\bq$.

Now define the rank function
\begin{align*}
r(a_0, \set{a_1, \dots, a_m}) \defas \sum_{i=0}^{m}\mathbb{I}[a_i < a_0]
\end{align*}
and note that $R \equiv r(T_0, \set{T_1, \dots, T_m})$.
By the hypothesis, $r(T_0, \set{T_1, \dots, T_m})/m$ is uniformly distributed on
$\set{0,1/m,2/m,\dots,1}$ for all $m > 0$.
Applying Lemma~\ref{lem:discrete-uniform-continuous} gives
\begin{align}
&\hspace*{-20pt}
\lim_{m\to\infty} \Prob{\frac{1}{m}\tilde{r}(T_0, \set{T_1,\dots,T_m}) < s} \notag \\
&\quad= \Prob{U_0 < s} \notag \\
&\quad= s. \label{eq:rank-converge-distribution-uniform}
\end{align}
for $s \in [0,1]$.

For any $t \in [0,1]$ and $m\ge 1$, the random variable $\hat{F}^m_\bp(t)
\defas \tilde{r}(t, \set{T_1,\dots,T_m})/m$ is the empirical distribution
of $F_\bp$.
Therefore, by the Glivenko--Cantelli theorem for empirical distribution
functions on $k$-dimensional Euclidean space
\citep[Corollary of Theorem~4]{dehardt1971}, the sequence of random variables
$(\hat{F}^m_\bp(t))_{m\ge 1}$ converges a.s.\ to the real number
$F_\bp(t)$ uniformly in $t$,
Hence the sequence $(\hat{F}^m_\bp(T_0))_{m\ge 1}$ converges a.s.\ to the random
variable $\hat{F}_\bp(T_0)$, so that for any $s \in [0,1]$,
\begin{align}
&
\hspace*{-30pt}
\lim_{m\to\infty} \Prob{
    \frac{1}{m}\tilde{r}(T_0, \set{T_1,\dots,T_m}) < s}
    \notag \\
&\ =\  \lim_{m\to\infty} \Prob{\hat{F}^m_\bp(T_0) < s} \\
% &= \Prob{\lim_{m\to\infty} \hat{F}^m_\bp(T_0) < s}
&\ =\  \Prob{F_\bp(T_0) < s}
  \label{eq:interchange-lim-prob} \\
&\ =\  \Prob{T_0 \dictorder F^{-1}_\bp(s)}
  \label{eq:invert-fp} \\
&\ =\  F_\bq(F^{-1}_\bp(s)). \label{eq:fq-equals-fp}
\end{align}
The interchange of the limit and the probability in
Eq.~\eqref{eq:interchange-lim-prob} follows from the bounded convergence
theorem, since $\hat{F}^m_\bp(T_0) \to F_\bp(T_0)$ a.s.\ and for all $m\ge 1$ we
have $\card{\hat{F}^m_\bp(T_0)} \le 1$ a.s.

Combining Eq.~\eqref{eq:rank-converge-distribution-uniform} and
Eq.~\eqref{eq:fq-equals-fp}, we see that
\begin{align*}
F_\bq(F^{-1}_\bp(s)) = s \implies F^{-1}_\bp(s) = F^{-1}_\bq(s),
%. && (0 \le s \le 1)
\end{align*}
for $s\in [0,1]$.
Since $0 \le F_\bp(x,u) \le 1$, for each $(x,u)\in[0,1]^2$ we have
\begin{align*}
F^{-1}_\bq(F_\bp(x,u))
  &\ =\  F^{-1}_\bp(F_\bp(x,u)) \\
  &\ =\  F^{-1}_\bq(F_\bq(x,u)) \\
  &\ =\  (x,u).
\end{align*}

It follows that $F_\bp(x,u) = F_\bq(x,u)$ for all $(x, u) \in [0,1]^2$.
Fixing $u=0$, we obtain
\begin{align}
\tbp(x) = F_\bp(x,0) = F_\bq(x,0) = \tbq(x)
\label{eq:tbp-equals-tbq}
\end{align}
  for $x \in [0,1]$.

Assume, towards a contradiction, that $\bp \ne \bq$. Let $a$ be any rational
such that $\bp(a) \ne \bq(a)$, and suppose without loss of generality that
$\bq(a) < \bp(a)$. By the Cauchy criterion
(Remark~\ref{lem:intermediate-value-rats}), there exists some $b > a$ such that
\begin{align*}
\sum_{a < x < b}\bq(x) < \bp(a) - \bq(a).
\end{align*}
Then we have
\begin{align*}
\tbq(b)
  &\ =\ \tbq(a) + \bq(a) + \sum_{x \in \Rationals \cap (a, b)}\bq(x) \\
  &\ =\ \tbp(a) + \bq(a) + \sum_{x \in \Rationals \cap (a, b)}\bq(x) \\
  &\ <\ \tbp(a) + \bq(a) + (\bp(a) - \bq(a)) \\
  &\ =\ \tbp(a) +\bp(a)  \\
  &\ \le\ \tbp(b),
\end{align*}
and so $\tbp \ne \tbq$, contradicting Eq.~\eqref{eq:tbp-equals-tbq}.
\end{proof}
%%%%%%%%%%%%%% %%%%%%%%%%%%%% %%%%%%%%%%%%%% %%%%%%%%%%%%%%

The following corollary is an immediate consequence.

%%%%%%%%%%%%%% %%%%%%%%%%%%%% %%%%%%%%%%%%%% %%%%%%%%%%%%%%
\vspace*{5pt}
\begin{corollary}[Corollary~\ref{corr:p-ne-q-exists} in the main text]
\label{corr:p-ne-q-exists-appx}
If \mbox{$\bp \ne \bq$}, then there is some $m$ such that $R$ is not uniformly
distributed on $[m+1]$.
\end{corollary}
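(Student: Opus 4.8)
The plan is to obtain this corollary as the contrapositive of one half of Theorem~\ref{thm:uniformity-appx}. That theorem states the biconditional ``$\bp = \bq$ if and only if $R$ is uniformly distributed on $[m+1]$ for every $m \ge 1$.'' In particular, its sufficiency direction gives: if the law of $R$ is the discrete uniform on $[m+1]$ for all $m \ge 1$, then $\bp = \bq$. Negating both sides of this implication, if $\bp \ne \bq$ then it is \emph{not} the case that $R$ is uniform on $[m+1]$ for all $m \ge 1$, i.e., there exists at least one integer $m \ge 1$ for which the distribution of $R$ differs from the discrete uniform on $[m+1]$. That is precisely the statement of the corollary, so the proof is a single line invoking Theorem~\ref{thm:uniformity-appx}.

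Consequently there is no real obstacle here: all of the substantive work—building the distribution functions $F_\bp, F_\bq$ on the unit square under the dictionary order, using Lemma~\ref{lem:discrete-uniform-continuous} and the multivariate Glivenko–Cantelli theorem to pass from uniformity of all the ranks to the identity $F_\bp = F_\bq$, extracting $\tbp = \tbq$, and finally deducing $\bp = \bq$ via the Cauchy criterion for the series indexed by the rationals—has already been discharged in the proof of Theorem~\ref{thm:uniformity-appx}. The only point that needs care is purely logical: the corollary asserts the existence of \emph{some} witnessing $m$, which is exactly the (weakest) negation of a universally quantified statement, so at this stage we do not attempt to identify or bound which $m$ works.

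If desired, one could also note here that this existence statement is the minimal version of a much stronger fact—namely that, with $M$ taken as in the statement, non-uniformity in fact holds for all $m \ge M$, and (absent an adversarial symmetry of $\bp$ and $\bq$ under $\prec$) for all $m \ge 1$. Those sharpenings, however, are the content of Theorem~\ref{thm:p-ne-q-finitely-many-m} and Corollary~\ref{corr:total-order-no-symmetry}, and are not needed to prove the corollary as stated; for the present claim the contrapositive argument above suffices.
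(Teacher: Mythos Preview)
Your proposal is correct and matches the paper's own treatment: the paper simply states that the corollary is ``an immediate consequence'' of Theorem~\ref{thm:uniformity-appx}, which is exactly the contrapositive argument you give.
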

%%%%%%%%%%%%%% %%%%%%%%%%%%%% %%%%%%%%%%%%%% %%%%%%%%%%%%%%

The next theorem strengthens Corollary~\ref{corr:p-ne-q-exists-appx} by showing
that $R$ is non-uniform for all but finitely many $m$.

%%%%%%%%%%%%%% %%%%%%%%%%%%%% %%%%%%%%%%%%%% %%%%%%%%%%%%%%
\vspace*{5pt}
\begin{theorem}[Theorem~\ref{thm:p-ne-q-finitely-many-m} in the main text]
\label{thm:p-ne-q-finitely-many-m-appx}
If $\bp \ne \bq$, then there is some $M \ge 1$ such that for all $m \ge M$, the
rank $R$ is not uniformly distributed on $[m+1]$.
\end{theorem}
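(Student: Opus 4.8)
The plan is to leverage the analytic structure of $\Prob{R=r}$ as a function of $m$ and show that, once it agrees with the discrete uniform $1/(m+1)$ at a single value $M$, this agreement propagates upward and forces uniformity for \emph{all} $m$, contradicting Theorem~\ref{thm:uniformity-appx} unless $\bp=\bq$. Concretely, reduce as in the proof of Theorem~\ref{thm:uniformity-appx} to $\dom = \Rationals\cap[0,1]$ with $\bp(0)=\bp(1)=0$, and work with the pushed-forward pair $T_0=(X_0,U_0)\sim \bq$-side, $T_j=(X_j,U_j)\sim\bp$-side in $([0,1]^2,\dictorder)$. Let $F_\bp$ be the distribution function on the unit square constructed there. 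Writing $W\defas F_\bp(T_0)\in[0,1]$, the rank $R$ is precisely the number of $j\in\{1,\dots,m\}$ with $T_j\dictorder T_0$; conditionally on $W$, and using that $F_\bp(T_j)$ is itself $\mathsf{Uniform}[0,1]$ when $T_j$ has law $\bp'$ (the probability integral transform, valid because $F_\bp$ is a continuous non-decreasing surjection in the relevant sense), the indicator $\indicator{T_j\dictorder T_0}$ is Bernoulli with parameter $W$. Hence $R\mid W \sim \mathrm{Binomial}(m,W)$, and therefore
\begin{align}
\Prob{R=r} = \binom{m}{r}\,\expect{W^r(1-W)^{m-r}}, \qquad 0\le r\le m.
\label{eq:binomial-mixture}
\end{align}

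From \eqref{eq:binomial-mixture} the strategy is a moment argument. Uniformity of $R$ on $[m+1]$ says $\binom{m}{r}\expect{W^r(1-W)^{m-r}} = 1/(m+1)$ for all $r$. Summing suitable combinations, or directly: expanding $(1-W)^{m-r}$ binomially shows that for each fixed $m$, the vector $\bigl(\expect{W^k}\bigr)_{k=0}^m$ is an invertible linear image of $\bigl(\Prob{R=r}\bigr)_{r=0}^m$, and the uniform distribution corresponds exactly to $\expect{W^k} = 1/(k+1)$ for $0\le k\le m$ (these are the moments of $\mathsf{Uniform}[0,1]$; one checks $\binom{m}{r}\int_0^1 w^r(1-w)^{m-r}\,dw = 1/(m+1)$ via the Beta integral, so the claimed identity holds iff $W$ has the same first $m$ moments as a uniform). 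Thus ``$R$ uniform on $[m+1]$'' is equivalent to ``$\expect{W^k}=1/(k+1)$ for all $k\le m$.'' Now take $M$ as in Corollary~\ref{corr:p-ne-q-exists-appx}: by definition $R$ \emph{is} uniform on $[M+1]$ — wait, $M$ is chosen so that $R$ is \emph{not} uniform on $[M+1]$; so instead argue contrapositively. Suppose $R$ \emph{were} uniform on $[m+1]$ for some $m\ge M$. Then $\expect{W^k}=1/(k+1)$ for all $k\le m$, hence in particular for all $k\le M$, which by the equivalence means $R$ is uniform on $[M+1]$ — contradicting the choice of $M$. So for all $m\ge M$, $R$ is not uniform on $[m+1]$, which is the claim.

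Let me restate this cleanly as the key steps, in order. First I would record the binomial-mixture representation \eqref{eq:binomial-mixture}, citing the probability integral transform applied to $F_\bp$ as in the proof of Theorem~\ref{thm:uniformity-appx} (the continuity of $F_\bp$ in $u$ and surjectivity onto $[0,1]$ are exactly the properties established there). Second, I would prove the algebraic equivalence: for fixed $m$, $R$ is uniform on $[m+1]$ if and only if $\expect{W^k}=\tfrac{1}{k+1}$ for every $k\in\{0,1,\dots,m\}$ — one direction via the Beta integral $\binom{m}{r}B(r+1,m-r+1)=1/(m+1)$, the other by the linear-independence/invertibility of the Bernstein basis $\{\binom{m}{r}w^r(1-w)^{m-r}\}$. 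Third, I would observe that the moment condition is monotone in $m$: the condition for $m$ includes the condition for every $m'\le m$. Fourth, combine: if $R$ were uniform on $[m+1]$ for some $m\ge M$, then it would be uniform on $[M+1]$, contradicting the definition of $M$ from Corollary~\ref{corr:p-ne-q-exists-appx}.

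The main obstacle will be step one — making the representation \eqref{eq:binomial-mixture} fully rigorous. The subtlety is that $F_\bp$ is not strictly monotone (it is flat at atoms of $\bp$ in the $x$-coordinate and flat off the support), so $F_\bp(T_j)$ being exactly $\mathsf{Uniform}[0,1]$ for $T_j$ distributed as $\bp'$ requires care: one shows $\Prob{F_\bp(T_j)\le s}=s$ using that $F_\bp$ attains every value in $[0,1]$ (established in the proof of Theorem~\ref{thm:uniformity-appx}) together with the definition of $\bp'$ as supported on the rationals. Equally, one must verify that $\indicator{T_j\dictorder T_0}$ has conditional-on-$T_0$ probability exactly $F_\bp(T_0^-)=F_\bp(T_0)$ — here the ties $\{T_j=T_0\}$ have probability zero because of the continuous $U$-coordinate, so the left-limit and the value coincide almost surely, and independence of the $T_j$ from $T_0$ then yields the $\mathrm{Binomial}(m,W)$ law. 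Once \eqref{eq:binomial-mixture} is in hand the rest is short linear algebra and the Beta integral.
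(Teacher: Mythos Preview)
Your argument is correct, but it takes a genuinely different route from the paper. The paper proves a purely combinatorial lemma (Lemma~\ref{lem:bernoulli-exchangeable}): for any finitely exchangeable Bernoulli sequence $Z_1,\dots,Z_{m+1}$, if $S_m=\sum_{i\le m}Z_i$ is non-uniform on $[m+1]$ then $S_{m+1}$ is non-uniform on $[m+2]$. The proof splits on whether the common marginal $r=\Prob{Z_i=1}$ equals $1/2$; when $r=1/2$ it is a direct binomial-coefficient identity showing that uniformity at $m+1$ forces uniformity at $m$. The theorem then follows by one induction step, noting that the indicators $Z_i=\indicator{T_i\dictorder T_0}$ are exchangeable because they are conditionally i.i.d.\ given $T_0$.

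Your approach instead exploits that conditional-i.i.d.\ structure explicitly: with $W=\Prob{T_1\dictorder T_0\mid T_0}$ you get $R\mid W\sim\mathrm{Binomial}(m,W)$, and then the Bernstein-basis inversion shows that uniformity of $R$ on $[m+1]$ is \emph{equivalent} to $\expect{W^k}=1/(k+1)$ for all $k\le m$. Since the moment conditions are nested in $m$, the contrapositive is immediate. This is more conceptual and yields the sharper characterization ``uniform at $m$ $\Leftrightarrow$ first $m$ moments of $W$ match $\mathsf{Uniform}[0,1]$,'' at the cost of invoking a (standard) change-of-basis fact. The paper's route is more elementary---pure counting, no polynomial bases---and applies verbatim to finitely exchangeable sequences that need not be mixtures. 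One simplification on your side: you do not need the probability integral transform or any of the $F_\bp$ machinery from Theorem~\ref{thm:uniformity-appx} for step one; simply \emph{define} $W$ as the conditional probability $\Prob{T_1\dictorder T_0\mid T_0}$ and the $\mathrm{Binomial}(m,W)$ representation follows directly from independence of $T_1,\dots,T_m$ from $T_0$ and from each other.
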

%%%%%%%%%%%%%% %%%%%%%%%%%%%% %%%%%%%%%%%%%% %%%%%%%%%%%%%%

Before proving Theorem~\ref{thm:p-ne-q-finitely-many-m-appx},
we show the following lemma.

%%%%%%%%%%%%%% %%%%%%%%%%%%%% %%%%%%%%%%%%%% %%%%%%%%%%%%%%
%\vspace*{5pt}
\begin{lemma}
\label{lem:bernoulli-exchangeable}
Suppose $Z_1, \dots, Z_{m+1}$ is a finitely exchangeable sequence of
Bernoulli random variables. If
\begin{align*}
S_m \defas \sum_{i=1}^{m}Z_i
\end{align*}
is not uniformly distributed on $[m+1]$, then
\begin{align*}
S_{m+1} \defas \sum_{i=1}^{m+1}Z_i
\end{align*}
is not uniformly distributed on $[m+2]$.
\end{lemma}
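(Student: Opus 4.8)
The plan is to prove the contrapositive: assuming that $S_{m+1}$ is uniformly distributed on $[m+2]=\set{0,1,\dots,m+1}$, I will show that $S_m$ is uniformly distributed on $[m+1]=\set{0,1,\dots,m}$.

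First I would express the law of $S_m$ in terms of the law of $S_{m+1}$ using finite exchangeability. The identity $S_m = S_{m+1} - Z_{m+1}$ holds deterministically, and by exchangeability of $Z_1,\dots,Z_{m+1}$, conditioning on the event $\set{S_{m+1}=k}$ makes all $\binom{m+1}{k}$ sign patterns with exactly $k$ ones equally likely, so that $\Prob{Z_{m+1}=1 \mid S_{m+1}=k} = \binom{m}{k-1}/\binom{m+1}{k} = k/(m+1)$. Writing $p_k \defas \Prob{S_{m+1}=k}$ and $q_j \defas \Prob{S_m=j}$, the event $\set{S_m=j}$ can occur only when $S_{m+1}\in\set{j,j+1}$, which gives
\[
q_j \;=\; p_j\cdot\frac{m+1-j}{m+1} \;+\; p_{j+1}\cdot\frac{j+1}{m+1},
\qquad j = 0,1,\dots,m.
\]

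Next I would substitute $p_k = 1/(m+2)$ for every $k\in\set{0,\dots,m+1}$ into this identity. The two numerators combine because $(m+1-j)+(j+1) = m+2$, yielding $q_j = \tfrac{1}{m+2}\cdot\tfrac{m+2}{m+1} = \tfrac{1}{m+1}$ for all $j$, so $S_m$ is uniform on $[m+1]$. Taking the contrapositive gives the lemma.

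The only delicate point is justifying the conditional probability $\Prob{Z_{m+1}=1\mid S_{m+1}=k} = k/(m+1)$, which is exactly where finite exchangeability is essential; I would also note the degenerate cases $p_k = 0$, where the conditional probability is undefined but the corresponding term in the decomposition of $q_j$ vanishes anyway, so the displayed identity still holds term by term. Everything else is a routine arithmetic verification.
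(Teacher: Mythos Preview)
Your proof is correct and follows essentially the same route as the paper: both argue the contrapositive, decompose $\Prob{S_m=k}$ according to the value of $Z_{m+1}$, and use exchangeability to reduce everything to the assumed uniform distribution of $S_{m+1}$. Your version is in fact slightly more streamlined---by working directly with the conditional probability $\Prob{Z_{m+1}=1\mid S_{m+1}=k}=k/(m+1)$ you bypass the paper's separate case split on the Bernoulli parameter $r$ and its explicit binomial-coefficient manipulations, but the underlying argument is the same.
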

%%%%%%%%%%%%%% %%%%%%%%%%%%%% %%%%%%%%%%%%%% %%%%%%%%%%%%%%

\begin{proof}
By finite exchangeability, there is some $r \in [0,1]$ such that the
distribution of every $Z_i$ is $\mathsf{Bernoulli}(r)$. There are two cases.

\paragraph{Case 1:} $r \ne 1/2$. For any $\ell\ge 1$, we have
\begin{align*}
\expect{S_{\ell}} = \expect{\sum_{i=1}^\ell Z_i}
  = \sum_{i=1}^\ell \expect{Z_i} = \ell r \ne r/2 = \expect{U_\ell},
\end{align*}
and so $S_\ell$ is not uniformly distributed on $[\ell+1]$. In particular, this
holds for $\ell$ equal to either $m$ or $m+1$, and so both the hypothesis and
conclusion are true.

\paragraph{Case 2:} $r = 1/2$. We prove the contrapositive. Suppose that
$S_{m+1}$ is uniformly distributed on $[m{+}1]$.

Assume $S_{m+1}$ is uniform and fix $k \in [m+1]$. By total probability, we have
\begin{align}
\begin{aligned}
\label{eq:total-probability-on-rm}
\Prob{S_m = k}
  &= \Prob{S_m = k \textrm{~and~} Z_{m+1} = 0} \\
  &\quad + \Prob{S_m = k \textrm{~and~} Z_{m+1} = 1}.
\end{aligned}
\end{align}
We consider the two events on the right-hand side of
Eq.~\eqref{eq:total-probability-on-rm} separately.

First, the event $\set{S_m = k}\,\cap\,\set{Z_{m+1} = 0}$ is the union over all
$\binom{m}{k}$ assignments of $(Z_1,\dots,Z_m)$ that have exactly $k$ ones and
$Z_{m+1} = 0$. All such assignments are disjoint events. Define the event
\begin{align*}
\begin{aligned}
A &\defas \{Z_1 = \cdots = Z_k = 1 \\
  &\quad \textrm{~and~} Z_{k+1} = \cdots = Z_m = Z_{m+1} = 0 \}.
\end{aligned}
\end{align*}
By finite exchangeability, each assignment has probability $\Prob{A}$, and so
\begin{align}
\label{eq:rm-eq-k-z0}
% \textstyle
\Prob{S_m = k \textrm{~and~} Z_{m+1} = 0} = \binom{m}{k}\,\Prob{A}.
\end{align}
Now, observe that the event $\set{S_{m+1} = k}$ is the union of all
$\binom{m+1}{k}$ assignments of $(Z_1,\dots,Z_{m+1})$ that have exactly $k$
ones. All the assignments are disjoint events and each has probability
$\Prob{A}$, and so
\begin{align}
\label{eq:rmp1-eq-k}
% \textstyle
\begin{aligned}
\Prob{S_{m+1}=k}
  &= \binom{m+1}{k}\,\Prob{A} \\
  &= \frac{1}{m+2}.
\end{aligned}
\end{align}

Second, the event $\set{S_m = k}\,\cap\,\set{Z_{m+1} = 1}$ is the union over all
$\binom{m}{k}$ assignments of $(Z_1,\dots,Z_m)$ that have exactly $k$ ones and
also $Z_{m+1} = 1$. All such assignments are disjoint events. Define the event
\begin{align*}
B &\defas \{Z_1 = \cdots = Z_{k} = Z_{m+1} = 1 \\
  &\quad \textrm{~and~} Z_{k+1} = \cdots = Z_m = 0\}.
\end{align*}
Again by finite exchangeability, each assignment has probability $\Prob{B}$,
and so
\begin{align}
\label{eq:rm-eq-k-z1}
% \textstyle
\Prob{S_m = k \textrm{~and~} Z_{m+1} = 1} = \binom{m}{k}\,\Prob{B}.
\end{align}
Likewise, observe that the event $\set{S_{m+1} = k+1}$ is the union of all
$\binom{m+1}{k+1}$ assignments of \mbox{$(Z_1,\dots,Z_{m+1})$}
that have exactly $k+1$ ones.
All the assignments are disjoint events and each has probability $\Prob{B}$,
and so
\begin{align}
% \textstyle
\label{eq:rmp1-eq-kp1}
\begin{aligned}
\Prob{S_{m+1}=k+1}
  &= \binom{m+1}{k+1}\,\Prob{B} \\
  &= \frac{1}{m+2}.
\end{aligned}
\end{align}

We now take Eq.~\eqref{eq:total-probability-on-rm}, divide by $1/(m+2)$,
and replace terms using Eqs.~\eqref{eq:rm-eq-k-z0}, \eqref{eq:rmp1-eq-k},
\eqref{eq:rm-eq-k-z1}, and \eqref{eq:rmp1-eq-kp1}:
\begin{align*}
&\frac{\Prob{S_m = k}}{1/(m+2)} \\
&\begin{aligned}
\quad &= \frac{\Prob{S_m = k \textrm{~and~} Z_{m+1} = 0}}{1/(m+2)} \\
  &\qquad + \frac{\Prob{S_m = k \textrm{~and~} Z_{m+1} = 1}}{1/(m+2)}
\end{aligned} \\
&\quad = \frac{\binom{m}{k}\Prob{A}}{\binom{m+1}{k}\Prob{A}}
  + \frac{\binom{m}{k}\Prob{B}}{\binom{m+1}{k+1}\Prob{B}} \\
&\begin{aligned}
\quad &= \frac{m!}{k!(m-k)!}\frac{k!(m+1-k)!}{(m+1)!} \\
  &\qquad + \frac{m!}{k!(m-k)!}\frac{(k+1)!(m+1-(k+1))!}{(m+1)!}\\
\end{aligned} \\
&\quad = \frac{m+1-k}{m+1} + \frac{k+1}{m+1} \\
&\quad = \frac{m+2}{m+1} \\
&\quad = \frac{1/(m+1)}{1/(m+2)},
\end{align*}
and so we conclude that $\Prob{S_m = k} = 1/(m+1).$
\end{proof}

We are now ready to prove Theorem~\ref{thm:p-ne-q-finitely-many-m-appx}.

\begin{proof}[Proof of Theorem~\ref{thm:p-ne-q-finitely-many-m-appx}]
Suppose $\bp \ne \bq$. By Corollary~\ref{corr:p-ne-q-exists-appx}, there is some
$M\ge 1$ such that the rank statistic
$R = \sum_{i=1}^{M}\indicator{T_i \prec T_0}$ for $m = M$ is
non-uniform over $[M+1]$.
Observe that the rank statistic for $m = M+1$ is given by
$\sum_{i=1}^{M+1}\indicator{T_i \prec T_0}$.

Now, each indicator $Z_i \defas \indicator{T_i \prec T_0}$ is a Bernoulli
random variable, and they are identically distributed since
$(T_1, \dots, T_{M+1})$
is an i.i.d.\ sequence. Furthermore the sequence $(Z_1,\dots,Z_{M+1})$ is
finitely exchangeable since the $Z_i$ are conditionally independent given $T_0$.
Then the sequence of indicators
$(\indicator{T_1 \prec T_0}, \indicator{T_2 \prec T_0},
  \dots, \indicator{T_{M+1} \prec T_0})$
satisfy the hypothesis of Lemma~\ref{lem:bernoulli-exchangeable}, and
so the rank statistic for $M+1$ is non-uniform. By induction, the rank
statistic is non-uniform for all $m \ge M$.
\end{proof}

In fact, unless $\bp$ and $\bq$ satisfy an adversarial symmetry relationship
under the selected ordering $\prec$, the rank is non-uniform for
\textit{any} choice of $m \ge 1$.
Let $\vlt$ denote the lexicographic order on $\dom \times [0,1]$ induced
by $(\dom, \prec)$ and $([0,1], <)$.

\vspace*{5pt}
\begin{corollary}[Corollary~\ref{corr:total-order-no-symmetry} in the main text]
\label{corr:total-order-no-symmetry-appx}
Suppose $\Prob{(X,U_1) \vlt (Y,U_0)} \ne 1/2$ for
$Y \sim \bq$, $X \sim \bp$, and $U_0, U_1 \simiid \mathsf{Uniform}(0,1)$.
Then for all $m \ge 1$, the rank $R$ is not uniformly distributed on $[m+1]$.
\end{corollary}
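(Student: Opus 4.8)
The plan is to show that the hypothesis $\Prob{(X,U_1) \vlt (Y,U_0)} \ne 1/2$ already forces non-uniformity at $m=1$, and then invoke Theorem~\ref{thm:p-ne-q-finitely-many-m-appx} to propagate this to all $m \ge 1$. First I would observe that when $m=1$, the rank $R$ takes values in $[2] = \set{0,1}$, and by the reformulation used throughout the proofs (pairing each $X_i$ with $U_i$ to form $T_i = (X_i,U_i)$ and ranking under the dictionary order $\dictorder$), we have $R = \indicator{T_1 \dictorder T_0}$, where $T_0 = (Y,U_0)$ has the law of $\bq$ paired with an independent uniform and $T_1 = (X,U_1)$ has the law of $\bp$ paired with an independent uniform. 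Hence $\Prob{R = 1} = \Prob{T_1 \dictorder T_0} = \Prob{(X,U_1) \vlt (Y,U_0)}$, since the lexicographic order $\vlt$ on $\dom \times [0,1]$ is exactly the restriction of $\dictorder$ to the relevant coordinates.

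Next I would note that $R$ is uniform on $\set{0,1\}$ if and only if $\Prob{R=1} = 1/2$. By the hypothesis, $\Prob{(X,U_1) \vlt (Y,U_0)} \ne 1/2$, so $R$ is \emph{not} uniformly distributed on $[2]$; that is, the conclusion holds for $m = 1$. Then I would apply Theorem~\ref{thm:p-ne-q-finitely-many-m-appx}: its proof shows that once $R$ is non-uniform for some value $m = M$ (here $M = 1$), an inductive application of Lemma~\ref{lem:bernoulli-exchangeable} to the finitely exchangeable Bernoulli sequence $Z_i = \indicator{T_i \dictorder T_0}$ yields non-uniformity of $R$ for every $m \ge M = 1$. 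This completes the argument.

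The one point that needs a little care — and is the only potential obstacle — is checking that the tie-breaking reformulation matches the order $\vlt$ in the statement exactly. Specifically, one must confirm that $\indicator{X_1 \prec X_0} + \indicator{X_1 = X_0, U_1 < U_0}$ equals $\indicator{(X_1,U_1) \vlt (X_0,U_0)}$: if $X_1 \prec X_0$ the left side is $1$ and $(X_1,U_1) \vlt (X_0,U_0)$ since the first coordinate already decides; if $X_1 \succ X_0$ both sides are $0$; and if $X_1 = X_0$ the left side is $\indicator{U_1 < U_0}$, which is exactly $\indicator{(X_1,U_1) \vlt (X_0,U_0)}$ under the lexicographic tie-break. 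So the identity $\Prob{R=1} = \Prob{(X,U_1)\vlt(Y,U_0)}$ is immediate, and everything else is a direct appeal to the already-established results.
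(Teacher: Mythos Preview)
Your proposal is correct and follows the same approach as the paper: establish non-uniformity at $m=1$ from the hypothesis, then invoke Theorem~\ref{thm:p-ne-q-finitely-many-m-appx} (via Lemma~\ref{lem:bernoulli-exchangeable}) to extend to all $m \ge 1$. Your treatment is in fact more careful than the paper's two-line proof, since you explicitly verify that $\Prob{R=1} = \Prob{(X,U_1)\vlt(Y,U_0)}$ and correctly note that it is the inductive mechanism in the \emph{proof} of Theorem~\ref{thm:p-ne-q-finitely-many-m-appx} (not its literal statement, whose hypothesis is $\bp\ne\bq$) that carries the conclusion from $m=1$ to all $m$.
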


\begin{proof}
If $\Prob{{(X,U_1) \vlt (Y,U_0)}} \ne 1/2$ then $R$ is non-uniform for $m=1$.
The conclusion follows by Theorem~\ref{thm:p-ne-q-finitely-many-m-appx}.
\end{proof}

%%%%%%%%%%%%%% %%%%%%%%%%%%%% %%%%%%%%%%%%%% %%%%%%%%%%%%%%
\subsection{An ordering that witnesses $\bp \neq \bq$ for $m=1$}
%%%%%%%%%%%%%% %%%%%%%%%%%%%% %%%%%%%%%%%%%% %%%%%%%%%%%%%%

We now describe an ordering $\prec$ for which, when $m=1$, we have
$\Prob{R = 0} > 1/2$.

Define
\begin{align*}
A \defas \set{x \in \dom \mid \bq(x) > \bp(x)}
\end{align*}
to be the set of all elements of $\dom$ that have a greater probability according
to $\bq$ than according to $\bp$, and let $A^c$ denote its complement.
Let $\bh_{\bp, \bq}$ be the signed measure given by the difference
$\bh_{\bp, \bq}(x) \defas \bq(x) - \bp(x)$
between $\bq$ and $\bp$; for the rest of this subsection, we denote this
simply by $\bh$.
Let $\plt$ be any total order on $\dom$ satisfying
\begin{itemize}
\item if $\bh(x) > \bh(x') $ then $x \plt x'$; and
\item if $\bh(x) < \bh(x') $ then $x \pgt x'$.
\end{itemize}

The linear ordering $\plt$ may be defined arbitrarily for all pairs $x$ and $x'$
which satisfy $\bh(x) = \bh(x')$. As an immediate consequence, $x \plt x'$
whenever $x \in A$ and $x' \in A^c$. Intuitively, the ordering is designed to
ensure that elements $x \in A$ are ``small'', and are ordered by decreasing
value of $\bq(x) - \bp(x)$ (with ties broken arbitrarily); elements $x \in A^c$
are ``large'' and are ordered by increasing value of $\bp(x) - \bq(x)$ (again,
with ties broken arbitrarily). The smallest element in $\dom$ maximizes
$\bq(x) - \bp(x)$ and the largest element in $\dom$ maximizes $\bp(x)-\bq(x)$.

We first establish some easy lemmas.

\vspace*{5pt}
\begin{lemma}
$A = \emptyset$ if and only if $\bp = \bq$.
\end{lemma}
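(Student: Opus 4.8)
The plan is to prove the two directions separately, with the nontrivial content being the forward implication, which follows from a standard ``nonnegative terms summing to zero'' argument.

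For the easy direction, suppose $\bp = \bq$. Then $\bq(x) = \bp(x)$ for every $x \in \dom$, so the strict inequality $\bq(x) > \bp(x)$ fails everywhere, and hence $A = \emptyset$ by definition.

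For the converse, suppose $A = \emptyset$. Then for every $x \in \dom$ we have $\bq(x) \le \bp(x)$, so the signed measure $\bh = \bh_{\bp,\bq}$ defined by $\bh(x) = \bq(x) - \bp(x)$ satisfies $\bh(x) \le 0$ for all $x$, i.e.\ $-\bh(x) = \bp(x) - \bq(x) \ge 0$. Since $\bp$ and $\bq$ are probability distributions on the countable set $\dom$, both $\sum_{x\in\dom}\bp(x)$ and $\sum_{x\in\dom}\bq(x)$ converge absolutely to $1$, so
\begin{align*}
\sum_{x\in\dom}\bigl(\bp(x) - \bq(x)\bigr) = \sum_{x\in\dom}\bp(x) - \sum_{x\in\dom}\bq(x) = 1 - 1 = 0.
\end{align*}
This is a convergent sum of nonnegative terms that equals $0$, so each term must vanish: $\bp(x) - \bq(x) = 0$ for all $x \in \dom$. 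Hence $\bp = \bq$.

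I do not anticipate any real obstacle here; the only point requiring a word of care is that $\dom$ may be countably infinite, so one should invoke absolute convergence of the two probability series (as already discussed in Remark~\ref{rem:series-indexed-rats}) to justify splitting the sum $\sum_x(\bp(x)-\bq(x))$ into the difference of the two sums and to conclude that a convergent series of nonnegative reals summing to zero is identically zero.
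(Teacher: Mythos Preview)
Your proof is correct and is essentially what the paper has in mind: the paper's own proof reads simply ``Immediate.'' Your write-up just spells out the obvious argument (that a nonnegative series summing to zero must vanish termwise), which is exactly the content being taken for granted.
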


\begin{proof}
Immediate.
\end{proof}

\vspace*{5pt}
\begin{lemma}
\label{lem:sum-a-eq-sum-ac}
\begin{align*}
\sum_{x \in A}\left[\bq(x) - \bp(x)\right] = \sum_{x \in A^c}[\bp(x) - \bq(x)].
\end{align*}
\end{lemma}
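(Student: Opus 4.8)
The plan is to reduce everything to the single observation that $\bp$ and $\bq$ are probability distributions, so $\sum_{x\in\dom}\bp(x)=\sum_{x\in\dom}\bq(x)=1$, and therefore the signed measure $\bh=\bq-\bp$ satisfies $\sum_{x\in\dom}\bh(x)=0$. Since $\dom=A\cup A^c$ is a disjoint partition, the claimed identity is just a rearrangement of this vanishing sum: splitting $\sum_{x\in\dom}\bh(x)$ along $A$ and $A^c$ gives $\sum_{x\in A}[\bq(x)-\bp(x)]+\sum_{x\in A^c}[\bq(x)-\bp(x)]=0$, and moving the second term to the other side and flipping its sign yields exactly $\sum_{x\in A}[\bq(x)-\bp(x)]=\sum_{x\in A^c}[\bp(x)-\bq(x)]$.

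Before writing that chain of equalities I would dispatch the only point that needs care when $\dom$ is countably infinite, namely that the series may legitimately be split and rearranged. On $A$ each term $\bh(x)=\bq(x)-\bp(x)$ is strictly positive and bounded above termwise by $\bq(x)$, so $\sum_{x\in A}\bh(x)$ is a series of nonnegative terms dominated by $\sum_{x\in A}\bq(x)\le 1$ and hence converges (absolutely). Symmetrically, on $A^c$ each term $\bp(x)-\bq(x)$ is nonnegative and dominated by $\bp(x)$, so $\sum_{x\in A^c}[\bp(x)-\bq(x)]$ converges and is at most $1$. Because both pieces converge absolutely, the sum over $\dom$ may be regrouped along the partition without reference to the enumeration of $\dom$, as recalled in Remark~\ref{rem:series-indexed-rats}.

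With that in hand the proof is a one-line computation:
\[
0=\sum_{x\in\dom}\bh(x)=\sum_{x\in A}\bh(x)+\sum_{x\in A^c}\bh(x)=\sum_{x\in A}[\bq(x)-\bp(x)]-\sum_{x\in A^c}[\bp(x)-\bq(x)],
\]
and rearranging gives the lemma. I do not anticipate any real obstacle here; the substantive content is entirely that two probability measures each have total mass one, and the lone technical nicety is the absolute-convergence / regrouping justification above, which the paper has already set up in the preceding remark.
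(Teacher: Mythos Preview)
Your proposal is correct and follows essentially the same approach as the paper: both arguments reduce to the observation that $\sum_{x\in\dom}\bq(x)-\sum_{x\in\dom}\bp(x)=0$ and then split this over the partition $A\cup A^c$. Your version is slightly more explicit about the absolute-convergence justification for regrouping in the countable case, which the paper's one-line proof omits.
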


\begin{proof}
We have
\begin{align*}
\begin{aligned}
&\sum_{x \in A}[\bq(x) - \bp(x)] - \sum_{x \in A^c}[\bp(x) - \bq(x)] \\
  &\quad = \sum_{x\in \dom}\bq(x) - \sum_{x\in \dom}\bp(x)  = 0,
\end{aligned}
\end{align*}
as desired.
\end{proof}

Given a probability distribution $\br$, define its cumulative distribution
function $\tbr$ by $\tbr(x) \defas \sum_{y \plt x}\br(y)$.

\vspace*{5pt}
\begin{lemma}
\label{lem:cdf-q-exceed-cdf-p}
$\tbq(x) > \tbp(x)$ for all $x \in \dom$.
\end{lemma}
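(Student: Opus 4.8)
The plan is to evaluate the difference $\tbq(x) - \tbp(x) = \sum_{y \plt x}\big(\bq(y) - \bp(y)\big) = \sum_{y \plt x}\bh(y)$ directly and show it is positive, treating the cases $x \in A$ and $x \in A^c$ separately. The key preliminary observation is a structural fact about $\plt$: since $\bh(y) > 0$ for $y \in A$ and $\bh(y) \le 0$ for $y \in A^c$, any $a \in A$ and $c \in A^c$ satisfy $\bh(a) > \bh(c)$ and hence $a \plt c$, so every element of $A$ precedes every element of $A^c$ in the order. A second preliminary fact I would record is that, because $\sum_{y\in A}\bh(y) \le 1$, only finitely many $y$ can have $\bh(y)$ above any positive threshold, so $\bh$ attains a maximum on $A$ and $\bp-\bq$ attains a maximum on $A^c$; moreover, when $\bp \ne \bq$ this latter maximum is strictly positive, since $\sum_{y\in A^c}\big(\bp(y)-\bq(y)\big) = \sum_{y\in A}\bh(y) > 0$ by Lemma~\ref{lem:sum-a-eq-sum-ac} and the fact that $A \ne \emptyset$.

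For $x \in A$, the structural fact gives $\set{y \plt x} \subseteq A$, so $\tbq(x) - \tbp(x) = \sum_{y \in A,\, y \plt x}\bh(y)$ is a sum of strictly positive terms, hence nonnegative and strictly positive once its index set is nonempty. For $x \in A^c$ I would split $\set{y \plt x} = A \sqcup \big(\set{y \plt x}\cap A^c\big)$ and use Lemma~\ref{lem:sum-a-eq-sum-ac} to replace $\sum_{y\in A}\bh(y)$ by $\sum_{y \in A^c}\big(\bp(y)-\bq(y)\big) = -\sum_{y\in A^c}\bh(y)$. Splitting this $A^c$-sum at $x$, the contribution of $\set{y \plt x}\cap A^c$ cancels against the corresponding term already present, leaving $\tbq(x) - \tbp(x) = \sum_{y \in A^c,\, y \pge x}\big(\bp(y) - \bq(y)\big)$. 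Every term here is nonnegative, and this index set contains the $\plt$-greatest element of $A^c$, whose $\bp-\bq$ value is the strictly positive maximum identified above; hence $\tbq(x) > \tbp(x)$.

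The step I expect to be the main obstacle is the $x \in A^c$ case, specifically justifying the rearrangement and cancellation of these series over the countable set $\dom$: this is where I would lean on the absolute convergence of the nonnegative series (Remark~\ref{rem:series-indexed-rats}) and on Lemma~\ref{lem:sum-a-eq-sum-ac} to convert the $A$-sum into an $A^c$-sum. One delicate point worth flagging is the $\plt$-least element of $\dom$, which lies in $A$: under the left-open convention $\tbr(x) = \sum_{y \plt x}\br(y)$ the index set is empty there, so the strict inequality is to be read as holding at every $x$ other than this single extreme point (where both CDFs vanish). This is harmless for the downstream use, where the inequality is integrated against $\bq$ to deduce $\Prob{R = 0} > 1/2$ when $m = 1$.
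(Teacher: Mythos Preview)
Your proposal is correct and follows essentially the same route as the paper: both split on $x \in A$ versus $x \in A^c$, use $\dom_x \subseteq A$ in the first case, and in the second case combine the decomposition $\{y \plt x\} = A \sqcup (A^c \cap \{y \plt x\})$ with Lemma~\ref{lem:sum-a-eq-sum-ac}. The only cosmetic difference is that the paper bounds $\sum_{A^c_x}(\bp-\bq) < \sum_{A^c}(\bp-\bq)$ directly, whereas you compute the exact identity $\tbq(x)-\tbp(x) = \sum_{y \in A^c,\, y \pge x}(\bp(y)-\bq(y))$ and then argue positivity; these are algebraically the same step. Your observation about the $\plt$-least element (where both CDFs vanish and the strict inequality degenerates to equality) is a genuine edge case that the paper's own proof glosses over, and your remark that it is harmless for the downstream use in Theorem~\ref{thm:prob-y-x-half} is correct.
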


\begin{proof}
Let $\dom_x \defas \set{y \in \dom \mid y \plt x}$. If $x \in A$ then
$\dom_x \subseteq A$, and so
\begin{align*}
\tbq(x) - \tbp(x) = \sum_{y \in \dom_x}[\bq(y) - \bp(y)]> 0,
\end{align*}
since all terms in the sum are positive.

Otherwise, $y\in A$ for all $y \plt x$, and so $A \subseteq \dom_x$. Let
$A^c_x \defas \set{y \in A^c \mid y \plt x}$. Then
\begin{align*}
&\tbq(x) - \tbp(x) \\
  &\ = \sum_{y \plt x}[\bq(y) - \bp(y)] \\
  &\ = \sum_{y \in A}[\bq(y) - \bp(y)] + \sum_{y \in A^c_x}[\bq(y) - \bp(y)] \\
  &\ = \sum_{y \in A_x}[\bq(y) - \bp(y)] - \sum_{y \in A^c_x}[\bp(y) - \bq(y)] \\
  &\ > \sum_{y \in A_x}[\bq(y) - \bp(y)] - \sum_{y \in A^c}[\bp(y) - \bq(y)] \\
  &\ = 0,
\end{align*}
establishing the lemma.
\end{proof}

We now analyze $\Prob{R = 0}$ in the case where $m=1$.
In this case, we may drop some subscripts and write $Y$ in place of $X_1$, so
that our setting reduces to the following random variables:
\begin{align*}
X_\bp &\sim \bp \\
Y_\bq &\sim \bq \\
R_{\bp, \bq} \mid X_\bp, Y_\bq &\sim \begin{cases}
    0 & \mbox{if } X_\bp \pgt Y_\bq, \\
    1 & \mbox{if } X_\bp \plt Y_\bq, \\
    \mathrm{Bernoulli}(1/2) & \mbox{if } X_\bp = Y_\bq. \\
  \end{cases}
\end{align*}
(We have indicated $\bp$ and $\bq$ in the subscripts, for use in the
next subsection.)

In other words, the procedure samples $X_\bp \sim$ $\bp$ and $Y_\bq \sim \bq$
independently.
Given these values, it then sets $R_{\bp, \bq}$ to be 0 if $X_\bp \pgt Y_\bq$,
to be 1 if $X_\bp \plt Y_\bq$,
and the outcome of an independent fair coin flip otherwise.

For the rest of this subsection, we will refer to these random variables simply
as $X$, $Y$, and $R$, though later on we will need them for several choices of
distributions $\bp$ and $\bq$ (and accordingly will retain the subscripts).

We now prove the following theorem.

\vspace*{5pt}
\begin{theorem}[Theorem 3.6 in the main text]
\label{thm:prob-y-x-half}
%Let $X$, $Y$, and $R$ be defined as above.
If $\bp \ne \bq$, then for $m=1$ and the ordering $\prec$ defined above, we have
$\Prob{R = 0} > 1/2$.
\end{theorem}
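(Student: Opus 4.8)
The plan is to reduce the statement to the single inequality $D \defas \Prob{X \pgt Y} - \Prob{X \plt Y} > 0$, and then to prove that inequality using the two elementary lemmas above together with the structural fact that the ordering $\plt$ places every element of $A$ before every element of $A^c$.

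I would begin with the reduction. Since $R$ is supported on $\set{0,1}$ with $\Prob{R=0} = \Prob{X \pgt Y} + \tfrac12\Prob{X = Y}$, and since $\Prob{X\pgt Y}+\Prob{X\plt Y}+\Prob{X=Y}=1$, a one-line computation gives $\Prob{R=0} = \tfrac12 + \tfrac12 D$. So it suffices to show $D > 0$.

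Next I would obtain a usable expression for $D$. Conditioning on $X$ and using independence yields $\Prob{X \pgt Y} = \sum_{x\in\dom}\bp(x)\tbq(x)$, and conditioning on $Y$ yields $\Prob{X \plt Y} = \sum_{x\in\dom}\bq(x)\tbp(x)$, where $\tbr(x) = \sum_{y\plt x}\br(y)$ is the (strict) CDF. All these series converge absolutely, so terms may be regrouped freely; adding and subtracting $\bp(x)\tbp(x)$ inside the difference gives the decomposition
\[
D = \sum_{x\in\dom}\bp(x)\bigl(\tbq(x)-\tbp(x)\bigr)\;-\;\sum_{x\in\dom}\bigl(\bq(x)-\bp(x)\bigr)\tbp(x) = \sum_{x}\bp(x)\,\tbh(x)-\sum_{x}\bh(x)\,\tbp(x),
\]
where $\tbh(x)\defas\tbq(x)-\tbp(x)$. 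By Lemma~\ref{lem:cdf-q-exceed-cdf-p}, $\tbh(x)>0$ for every $x$, and $\bp$ is a probability distribution, so the first sum is strictly positive. It then remains only to show that the second sum is nonpositive.

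This last step — showing $\sum_x \bh(x)\tbp(x) \le 0$ — is where the construction of $\plt$ does its work, and it is the main (though short) obstacle. Because every element of $A$ precedes every element of $A^c$ under $\plt$, for $x\in A$ we have $\tbp(x)\le p_A\defas\sum_{y\in A}\bp(y)$, while for $x\in A^c$ we have $\tbp(x)\ge p_A$. Splitting the sum over $A$ and $A^c$, and using $\bh>0$ on $A$ and $\bh\le 0$ on $A^c$, gives $\sum_{x\in A}\bh(x)\tbp(x)\le p_A\sum_{x\in A}\bh(x)$ and $\sum_{x\in A^c}\bh(x)\tbp(x)\le p_A\sum_{x\in A^c}\bh(x)$; by Lemma~\ref{lem:sum-a-eq-sum-ac} these two upper bounds equal $p_A\Delta$ and $-p_A\Delta$ for $\Delta\defas\sum_{x\in A}[\bq(x)-\bp(x)]$, hence sum to zero. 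Therefore $\sum_x\bh(x)\tbp(x)\le 0$, so $D>0$ and $\Prob{R=0}>\tfrac12$. The only remaining care is routine bookkeeping with the absolutely convergent series (and the observation that a $\bp$-point mass can never be the $\plt$-least element, so the first sum is genuinely positive).
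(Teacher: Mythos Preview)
Your proof is correct and follows essentially the same route as the paper's: both reduce to showing $\sum_x\tbq(x)\bp(x)-\sum_x\tbp(x)\bq(x)>0$, extract the strict inequality from Lemma~\ref{lem:cdf-q-exceed-cdf-p}, and then show $\sum_x\bh(x)\tbp(x)\le 0$ by splitting over $A$ and $A^c$ and invoking Lemma~\ref{lem:sum-a-eq-sum-ac} (you use $p_A=\bp(A)$ as the separating threshold where the paper uses $\max_{y\in A}\tbp(y)$, a purely cosmetic difference). One small wording issue: your parenthetical ``a $\bp$-point mass can never be the $\plt$-least element'' is not literally true---the least element can carry $\bp$-mass---but what you actually need, and what does hold, is that $\bp$ cannot be \emph{entirely} supported there when $\bp\ne\bq$, so some $x$ with $\bp(x)>0$ has $\tbh(x)>0$ and the first sum is genuinely positive.
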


\begin{proof}
From total probability and independence of $X$ and $Y$, we have
\begin{align*}
&\Prob{R = 0} \\
&= \sum_{x,y \in \dom} \Prob{R {=} 0\,{\mid}\, X {=} x, Y {=} y} \Prob{Y=y}\Prob{X=x} \\
&= \sum_{x,y \in \dom} \Prob{R {=} 0\,{\mid}\, X {=} x, Y {=} y} \bq(y)\bp(x) \\
&= \sum_{x \in \dom} \Prob{R {=} 0\,{\mid}\, X {=} x, Y {=} x} \bq(x)\bp(x) \\
  &\qquad + \sum_{y \plt x \in \dom} \Prob{R {=} 0\,{\mid}\, X {=} x, Y {=} y} \bq(y)\bp(x) \\
  &\qquad + \sum_{x \plt y \in \dom} \Prob{R {=} 0\,{\mid}\, X {=} x, Y {=} y} \bq(y)\bp(x) \\
&\begin{aligned}
  &=\frac{1}{2} \sum_{x \in \dom} \bq(x)\bp(x)
    + 1 \sum_{y \plt x \in \dom} \bq(y)\bp(x) \\
  &\qquad + 0 \sum_{x \plt y \in \dom} \bq(y)\bp(x)
\end{aligned}\\
&= \frac{1}{2} \sum_{x \in \dom} \bp(x)\bq(x) + \sum_{x \in \dom}\tbq(x)\bp(x).
\end{align*}

An identical argument establishes that
\begin{align*}
\Prob{R {=} 1}
  &= \frac{1}{2} \sum_{x \in \dom} \bp(x)\bq(x) + \sum_{x \in \dom}\tbp(x)\bq(x).
\end{align*}
Since $\Prob{R {=} 0} + \Prob{R = 1} = 1$, it suffices to establish that
$\Prob{R=0} > \Prob{R=1}$. We have
\begin{align*}
&\Prob{R = 0} - \Prob{R=1}\\
%  &= \frac{1}{2} \sum_{x} \bp(x)\bq(y) + \sum_{x}\tbq(x)\bp(x)
    %- \frac{1}{2} \sum_{x} \bp(x)\bq(y) - \sum_{x}\tbp(x)\bq(x) \\
&= \sum_{x \in \dom}\tbq(x)\bp(x) - \sum_{x \in \dom}\tbp(x)\bq(x) \\
&> \sum_{x \in \dom}\tbp(x)\bp(x) - \sum_{x \in \dom}\tbp(x)\bq(x) \\
  % && \mbox{(Lemma~\ref{lem:cdf-q-exceed-cdf-p})} \\
&= \sum_{x \in \dom}\tbp(x)[\bp(x) - \bq(x)] \\
%  &= \sum_{x \in A^c}\tbp(x)[\bp(x) - \bq(x)]
%    + \sum_{x \in A}\tbp(x)[\bp(x) - \bq(x)] \\
&= \sum_{x \in A^c}\tbp(x)[\bp(x) - \bq(x)]
  - \sum_{x \in A}\tbp(x)[\bq(x) - \bp(x)] \\
&\begin{aligned}
  &\ge \sum_{x \in A^c}\bigl(\max_{y \in A}\tbp(y)\bigr)[\bp(x) - \bq(x)] \\
  &\qquad - \sum_{x \in A}\tbp(x)[\bq(x) - \bp(x)]
\end{aligned}\\
      % && \mbox{(monotonicity of $\tbp$)} \\
%  &= \max_{y \in A}\tbp(y)\sum_{x \in A^c}[\bp(x) - \bq(x)]
%    - \sum_{x \in A}\tbp(x)[\bq(x) - \bp(x)] \\
&\begin{aligned}
  &= \sum_{x \in A} \bigl(\max_{y \in A}\tbp(y)\bigr) [\bq(x) - \bp(x)]\\
  &\qquad - \sum_{x \in A}\tbp(x)[\bq(x) - \bp(x)]
\end{aligned}\\
      % && \mbox{(Lemma~\ref{lem:sum-a-eq-sum-ac})} \\
  &= \sum_{x \in A}\bigl(\max_{y \in A}\tbp(y)-\tbp(x)\bigr)[\bq(x) - \bp(x)] \\
  &> 0.
  % && \mbox{(sum of positive terms)}
\end{align*}
The first inequality follows from Lemma~\ref{lem:cdf-q-exceed-cdf-p};
the second inequality follows from monotonicity of $\tbp$;
the second-to-last equality follows from Lemma~\ref{lem:sum-a-eq-sum-ac};
and the final inequality follows from the fact that all terms in the
sum are positive.
\end{proof}

%%%%%%%%%%%%%% %%%%%%%%%%%%%% %%%%%%%%%%%%%% %%%%%%%%%%%%%%
\subsection{A tighter bound in terms of $L_\infty(\bp, \bq)$}
%%%%%%%%%%%%%% %%%%%%%%%%%%%% %%%%%%%%%%%%%% %%%%%%%%%%%%%%

We have just exhibited an ordering such that when
$\bp \ne \bq$ and $m=1$,
we have $\Prob{R=0} > 1/2$. We are now
interested in obtaining a tighter lower bound on this
probability in terms of
the $L_\infty$ distance between $\bp$ and $\bq$.

In this subsection and the following one, we assume that $\dom$ is finite.
We first note the following immediate lemma.

\vspace*{5pt}
\begin{lemma}
\label{lem:L_infty-continuous-of-R0}
Let $B, C \subseteq \dom$.
For all $\bp, \bq$ and all $\delta > 0$
there is an $\epsilon > 0$
such that for all distributions $\bp'$ on $\dom$ with
$\sup_{x\in\dom} |\bp(x) - \bp'(x)| < \epsilon$, we have
\begin{align*}
\begin{aligned}
&\bigl|\Pr(R_{\bp, \bq} = 0\given X_\bp \in B,\ Y_\bq \in C) \\
  &\quad - \Pr(R_{\bp', \bq} = 0\given X_{\bp'} \in B,\ Y_\bq \in C)\bigr|
  < \delta.
\end{aligned}
\end{align*}
\end{lemma}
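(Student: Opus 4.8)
The plan is to write the conditional probability explicitly as a ratio of two affine functions of the probability vector $(\bp(x))_{x\in\dom}$ and then apply the elementary estimate for the difference of two nearby fractions. Since $\dom$ is finite and $X_\bp$, $Y_\bq$ are independent, setting $g(x,y) \defas \Pr(R_{\bp,\bq} = 0 \given X_\bp = x,\ Y_\bq = y)$ --- which equals $1$ if $y \plt x$, equals $0$ if $x \plt y$, equals $1/2$ if $x = y$, and in particular does \emph{not} depend on $\bp$ or $\bq$ --- total probability and independence give
\[
\Pr(R_{\bp,\bq} = 0 \given X_\bp \in B,\ Y_\bq \in C)
  = \frac{\sum_{x \in B}\sum_{y \in C} g(x,y)\,\bp(x)\,\bq(y)}{\bp(B)\,\bq(C)},
\]
where $\bp(B) \defas \sum_{x\in B}\bp(x)$; write $N(\bp)$ for the numerator and $D(\bp)$ for the denominator of this display. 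We invoke the standing nondegeneracy conditions $\bp(B) > 0$ and $\bq(C) > 0$ under which the conditioning is meaningful; since $\bq$ is fixed throughout, only the condition on $\bp$ is at issue for $\bp'$, and it will hold automatically once $\epsilon$ is small.

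The structural facts I would record are: (i) $N$ and $D$ are affine in $(\bp(x))_x$ with coefficients bounded by $1$, hence $\card{\dom}$-Lipschitz in $\sup_x\lvert\cdot\rvert$, i.e.\ $\lvert N(\bp') - N(\bp)\rvert \le \card{\dom}\,\epsilon$ and $\lvert D(\bp') - D(\bp)\rvert \le \card{\dom}\,\epsilon$ whenever $\sup_x\lvert\bp'(x) - \bp(x)\rvert < \epsilon$; and (ii) $0 \le N(\bp) \le D(\bp) \le 1$. Granting these, I would shrink $\epsilon$ so that $\card{\dom}\,\epsilon \le D(\bp)/2$, which forces $D(\bp') \ge D(\bp)/2 > 0$ (so $\bp'(B) > 0$ as well, making the second conditional probability well-defined), and then estimate
\[
\left\lvert\frac{N(\bp')}{D(\bp')} - \frac{N(\bp)}{D(\bp)}\right\rvert
  = \frac{\lvert N(\bp')D(\bp) - N(\bp)D(\bp')\rvert}{D(\bp')\,D(\bp)}
  \le \frac{\card{\dom}\,\epsilon\,(D(\bp) + N(\bp))}{D(\bp)^2/2}
  \le \frac{4\,\card{\dom}\,\epsilon}{D(\bp)^2},
\]
using $N(\bp) \le D(\bp) \le 1$ at the last step. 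Taking $\epsilon \defas \min\{\,D(\bp)/(2\card{\dom}),\ \delta\,D(\bp)^2/(4\card{\dom})\,\}$ makes the right-hand side strictly less than $\delta$, which is the claim.

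The whole computation is a sequence of finite-sum manipulations plus a one-line fraction estimate, so there is no genuine obstacle. The only two points deserving care are: recording that $g$ depends only on $\prec$ and not on $\bp$ --- this is precisely what makes $N$ and $D$ affine rather than merely continuous --- and making explicit the implicit nondegeneracy hypothesis $\bp(B),\bq(C) > 0$, which both legitimizes the conditioning and, once $\epsilon$ is small, guarantees $\bp'(B) > 0$. A shorter alternative is to note that $(\bp(x))_x \mapsto N(\bp)/D(\bp)$ is a rational function, continuous on the open region $\{D > 0\}$, and simply quote continuity; I would include the explicit $\epsilon$ above only if a quantitative modulus of continuity is needed later in the paper.
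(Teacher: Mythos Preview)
Your argument is correct. The paper does not give a proof of this lemma at all --- it is introduced with the phrase ``We first note the following immediate lemma'' and then simply used. Your write-up supplies exactly the kind of routine verification that justifies calling it immediate: express the conditional probability as $N(\bp)/D(\bp)$ with $N$ and $D$ affine in the finite vector $(\bp(x))_{x\in\dom}$ (using that the pointwise conditional $g(x,y)$ depends only on $\prec$, not on $\bp$), bound the Lipschitz constants by $\card{\dom}$, and apply the standard two-fraction estimate. The one substantive addition you make over the paper is to flag the implicit nondegeneracy hypotheses $\bp(B)>0$ and $\bq(C)>0$, without which the conditioning is undefined; this is worth recording, and your choice of $\epsilon$ correctly propagates $\bp'(B)>0$. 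Your closing remark --- that one could simply invoke continuity of a rational function on $\{D>0\}$ --- is precisely the one-line version the paper is gesturing at.
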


\vspace*{5pt}
\begin{definition}
\label{def:epsilon-discrete}
We say that $\bp$ is \defn{$\epsilon$-discrete} (with respect to $\bq$) if for
all $a, b\in \dom$ we have
\begin{align*}
\bigl|\bh_{\bp, \bq}(a) - \bh_{\bp, \bq}(b)\bigr| \geq \epsilon.
\end{align*}
\end{definition}

From Lemma~\ref{lem:L_infty-continuous-of-R0}
we immediately obtain the following.
\begin{lemma}
\label{lem:change-to-e-discrete-pair}
For all $\bp, \bq$ and all $\delta >0$ there is an $\epsilon > 0$ and an
$\epsilon$-discrete distribution $\bp_\epsilon$ on $\dom$ such that for
all $B, C \subseteq \dom$,
\begin{align*}
\begin{aligned}
&\bigl|\Pr(R_{\bp, \bq} = 0 \given X_\bp \in B,\ Y_\bq \in C) \\
  &\quad - \Pr(R_{\bp_\epsilon, \bq} = 0 \given X_{\bp_\epsilon} \in B,\ Y_\bq \in C)\bigr|
  < \delta.
\end{aligned}
\end{align*}
\end{lemma}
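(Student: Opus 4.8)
The plan is to combine the continuity estimate of Lemma~\ref{lem:L_infty-continuous-of-R0} with an elementary genericity argument: an arbitrarily small perturbation of $\bp$ can be chosen so that the finitely many values of the signed measure $\bh_{\bp,\bq}$ become pairwise distinct, and on a finite domain pairwise distinctness is exactly $\epsilon$-discreteness for $\epsilon$ equal to the minimum gap. I would carry this out in three steps.

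\textbf{Step 1 (make the tolerance uniform in $B,C$).} Since $\dom$ is finite there are only finitely many pairs of subsets $(B,C)$ with $B,C\subseteq\dom$. For each such pair, Lemma~\ref{lem:L_infty-continuous-of-R0} applied to $\bp,\bq,\delta$ supplies a tolerance; letting $\eta>0$ be the minimum of these finitely many tolerances, we obtain a single $\eta$ such that every distribution $\bp'$ on $\dom$ with $\sup_{x\in\dom}\abs{\bp(x)-\bp'(x)}<\eta$ satisfies $\abs{\Pr(R_{\bp,\bq}=0\given X_\bp\in B,\ Y_\bq\in C)-\Pr(R_{\bp',\bq}=0\given X_{\bp'}\in B,\ Y_\bq\in C)}<\delta$ simultaneously for all $B,C$.

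\textbf{Step 2 (build an $\epsilon$-discrete $\bp_\epsilon$ within $\eta$ of $\bp$).} Enumerate $\dom=\set{x_1,\dots,x_N}$ and consider perturbations $\bp_t\defas\bp+tv$ along a direction $v\in\Reals^N$ with $\sum_i v_i=0$ and with $v_i\ge 0$ on the (proper) set of indices where $\bp(x_i)=0$; for all sufficiently small $t>0$, $\bp_t$ is a genuine probability distribution on $\dom$. For indices $i\ne j$, the equality $\bh_{\bp_t,\bq}(x_i)=\bh_{\bp_t,\bq}(x_j)$ reads $t(v_j-v_i)=(\bq(x_j)-\bp(x_j))-(\bq(x_i)-\bp(x_i))$, which — once $v$ is chosen with the $v_i$ pairwise distinct — fails for all $t$ outside a finite exceptional set. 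Fixing such a $v\ne 0$ and a $t\in(0,\eta/(1+\max_i\abs{v_i}))$ avoiding the exceptional values, set $\bp_\epsilon\defas\bp_t$; then $\sup_x\abs{\bp(x)-\bp_\epsilon(x)}<\eta$, the $N$ numbers $\bh_{\bp_\epsilon,\bq}(x_i)$ are pairwise distinct, and taking $\epsilon\defas\min_{i\ne j}\abs{\bh_{\bp_\epsilon,\bq}(x_i)-\bh_{\bp_\epsilon,\bq}(x_j)}>0$ makes $\bp_\epsilon$ $\epsilon$-discrete in the sense of Definition~\ref{def:epsilon-discrete}.

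\textbf{Step 3 (conclude).} Since $\sup_x\abs{\bp(x)-\bp_\epsilon(x)}<\eta$, the uniform tolerance from Step~1 yields $\abs{\Pr(R_{\bp,\bq}=0\given X_\bp\in B,\ Y_\bq\in C)-\Pr(R_{\bp_\epsilon,\bq}=0\given X_{\bp_\epsilon}\in B,\ Y_\bq\in C)}<\delta$ for every $B,C\subseteq\dom$, which is the assertion. The only mildly delicate point is Step~2: one must keep the perturbation inside the probability simplex (not pushing already-zero coordinates of $\bp$ negative) while retaining enough freedom to separate the values $\bh_{\bp_\epsilon,\bq}(x_i)$. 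This is routine because the admissible directions still form a full-dimensional cone inside the hyperplane $\set{v:\sum_i v_i=0}$, and a generic direction in that cone works; everything else is finiteness of $\dom$ and a direct appeal to Lemma~\ref{lem:L_infty-continuous-of-R0}.
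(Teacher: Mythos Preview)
Your argument is correct and is exactly the elaboration the paper has in mind: the paper asserts the lemma follows ``immediately'' from Lemma~\ref{lem:L_infty-continuous-of-R0} and gives no further details, so your three steps (uniform tolerance via finiteness of $\dom$, a generic linear perturbation inside the simplex to separate the values of $\bh_{\cdot,\bq}$, then the conclusion) are a faithful and careful expansion of that one-line proof.
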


The next lemma will be crucial for proving our bound.

\vspace*{5pt}
\begin{lemma}
\label{lem:mass-move-p}
Let $\bp_0$ and $\bp_1$ be probability measures on $\dom$, and let
$\vlt$ be a total order on $\dom$ such that
if $\bh_{\bp_0, \bq}(x) > \bh_{\bp_0, \bq}(x')$  then $x \vlt x'$ and
if $\bh_{\bp_0, \bq}(x) < \bh_{\bp_0, \bq}(x')$  then $x \vgt x'$.
Suppose that
if $\bh_{\bp_0, \bp_1}(x) > 0$ and $\bh_{\bp_0, \bp_1}(y) \leq 0$, then $x \vlt y$.
Then $\Pr(R_{\bp_0, \bq} = 0) \geq \Pr(R_{\bp_1, \bq} = 0)$.
\end{lemma}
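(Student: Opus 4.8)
The plan is to reduce everything to the closed-form expression for $\Pr(R=0)$ with $m=1$ obtained in the proof of Theorem~\ref{thm:prob-y-x-half}. That derivation used only total probability, independence of the two draws, and the conditional law of $R$ given the two draws, so it is valid for \emph{any} strict total order on $\dom$, in particular for $\vlt$. Writing $\tbq(x)=\sum_{y\vlt x}\bq(y)$ for the c.d.f.\ of $\bq$ along $\vlt$, it gives
\[
\Pr(R_{\bp_i,\bq}=0)=\tfrac12\sum_{x\in\dom}\bp_i(x)\bq(x)+\sum_{x\in\dom}\tbq(x)\bp_i(x)
\]
for $i\in\{0,1\}$. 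Subtracting, and setting $g(x)\defas\bp_0(x)-\bp_1(x)=-\bh_{\bp_0,\bp_1}(x)$ and $G(x)\defas\tbq(x)+\tfrac12\bq(x)$, I get
\[
\Pr(R_{\bp_0,\bq}=0)-\Pr(R_{\bp_1,\bq}=0)=\sum_{x\in\dom}g(x)\,G(x).
\]
Two facts drive the rest: $\sum_{x\in\dom}g(x)=\sum_x\bp_0(x)-\sum_x\bp_1(x)=0$; and $G$ is non-decreasing along $\vlt$, since $x\vlt x'$ forces $\tbq(x')\ge\tbq(x)+\bq(x)$, hence $G(x')\ge G(x)+\tfrac12(\bq(x)+\bq(x'))\ge G(x)$.

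Next I would extract the threshold structure from the mass-transport hypothesis. The condition ``$\bh_{\bp_0,\bp_1}(x)>0$ and $\bh_{\bp_0,\bp_1}(y)\le 0$ imply $x\vlt y$'' says precisely that every $x$ with $g(x)<0$ is $\vlt$-below every $y$ with $g(y)\ge 0$. The set $\set{y\in\dom\mid g(y)\ge 0}$ is non-empty (otherwise $\sum_x g(x)<0$), so put $c\defas\min\set{G(y)\mid g(y)\ge 0}$. By monotonicity of $G$, $G(x)\le c$ whenever $g(x)<0$, while $G(y)\ge c$ whenever $g(y)\ge 0$ by the choice of $c$; in either case $g(x)$ and $G(x)-c$ have the same sign, so $g(x)\bigl(G(x)-c\bigr)\ge 0$ for every $x\in\dom$.

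Finally, using $\sum_x g(x)=0$,
\[
\Pr(R_{\bp_0,\bq}=0)-\Pr(R_{\bp_1,\bq}=0)=\sum_{x\in\dom}g(x)G(x)=\sum_{x\in\dom}g(x)\bigl(G(x)-c\bigr)\ge 0,
\]
which is the claim. The only genuine content is the first step: recognizing that the weight attached to each unit of transported mass is the mid-c.d.f.\ $G=\tbq+\tfrac12\bq$, and that this $G$ is monotone along $\vlt$. Once that is in hand, the telescoping against the threshold value $c$ (legitimate because $\sum_x g(x)=0$) is routine, so I do not anticipate a real obstacle.
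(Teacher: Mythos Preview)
Your argument is correct. It differs from the paper's in a clean, dual way. The paper conditions on the $\bq$-draw: for each fixed $y$ it writes
\[
\Pr(R_{\bp_1,\bq}=0\mid Y_\bq=y)=\Pr(R_{\bp_0,\bq}=0\mid Y_\bq=y)-\Bigl(\textstyle\sum_{x\vlt y}\bh_{\bp_0,\bp_1}(x)+\tfrac12\bh_{\bp_0,\bp_1}(y)\Bigr),
\]
and then uses the initial-segment structure of $\{x:\bh_{\bp_0,\bp_1}(x)>0\}$ to show the bracketed partial sum is nonnegative for every $y$, giving the stronger pointwise inequality $\Pr(R_{\bp_0,\bq}=0\mid Y_\bq=y)\ge\Pr(R_{\bp_1,\bq}=0\mid Y_\bq=y)$. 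You instead condition on the $\bp$-draw, recognizing $G(x)=\tbq(x)+\tfrac12\bq(x)=\Pr(R=0\mid X=x)$ as the mid-CDF of $\bq$ along $\vlt$, and reduce the question to showing $\sum_x g(x)G(x)\ge 0$ for a zero-sum $g$ that is negative on a $\vlt$-initial segment and nonnegative thereafter against a monotone $G$ --- a one-line rearrangement/threshold argument. Your route is a bit more conceptual and reusable (it is exactly the ``monotone weight against a threshold signed measure'' pattern), while the paper's route yields the conditional inequality as a bonus. Both use the ambient finite-$\dom$ assumption only to guarantee that your $c=\min\{G(y):g(y)\ge 0\}$ exists.
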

\begin{proof}
Note that
\begin{align*}
&\Pr(R_{\bp_1, \bq} = 0 \given Y_\bq = y)\\
&= \sum_{x \vgt y}\bp_1(x) +\frac{1}{2}\bp_1(y) \\
&= \sum_{x \vgt y}\bp_0(x) + \bh_{\bp_0, \bp_1}(x) +\frac{1}{2}[\bp_0(y) + \bh_{\bp_0, \bp_1}(y)] \\
&= \Pr(R_{\bp_0, \bq} = 0 \given Y_\bq = y) + \sum_{x \vgt y}\bh_{\bp_0, \bp_1}(x) +\frac{1}{2}\bh_{\bp_0, \bp_1}(y) \\
&= \Pr(R_{\bp_0, \bq} = 0 \given Y_\bq = y) - \sum_{x \vlt y}\bh_{\bp_0, \bp_1}(x) -\frac{1}{2}\bh_{\bp_0, \bp_1}(y),
\end{align*}
where the last equality holds because $\sum_{x\in \dom} \bh_{\bp_0, \bp_1}(x) = 0$.
But by our assumption, we know that
$\sum_{x \vlt y}\bh_{\bp_0, \bp_1}(x) +\frac{1}{2}\bh_{\bp_0, \bp_1}(y)$
is non-negative and so
$\Pr(R_{\bp_1, \bq} = 0 \given Y_\bq = y) \leq \Pr(R_{\bp_0, \bq} = 0 \given Y_\bq = y)$,
from which the result follows.
\end{proof}

We will now provide a lower bound on $\Pr(R_{\bp, \bq} = 0)$.

\vspace*{5pt}
\begin{proposition}
\label{prop:lower-bound-pAh}
\begin{equation}
\Pr(R_{\bp, \bq} = 0)
  \geq  \frac{1}{2} + \frac{1}{2}\max_{x \in \dom} \bh_{\bp, \bq}(x)^2.
\label{eq:lower-bound-pAh}
\end{equation}
\end{proposition}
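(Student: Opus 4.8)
The plan is to reduce, by a mass‑transport argument, to the case in which $\bp$ and $\bq$ agree except that $\bq$ carries strictly more mass than $\bp$ at a single point, and then to read off the bound directly from the formula for $\Pr(R_{\bp,\bq}=0)$ computed in the proof of Theorem~\ref{thm:prob-y-x-half}. Write $\bh\defas\bh_{\bp,\bq}$, set $h^\ast\defas\max_{x\in\dom}\bh(x)$, and let $x^\ast$ be a maximizer of $\bh$; since $\prec$ satisfies $\bh(x)>\bh(x')\Rightarrow x\prec x'$, the maximizers form an initial segment and we may take $x^\ast$ to be the $\prec$‑least element of $\dom$. If $\bp=\bq$, both sides of \eqref{eq:lower-bound-pAh} equal $1/2$; so assume $\bp\neq\bq$, giving $h^\ast>0$ and $x^\ast\in A$. (If one wishes, Lemma~\ref{lem:change-to-e-discrete-pair} lets us first assume $\bp$ is $\epsilon$‑discrete so that $x^\ast$ and the initial segment below are unambiguous, passing to the limit at the end by continuity in $\bp$ of both sides of \eqref{eq:lower-bound-pAh}.)

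\textbf{Reduction.} Fix $\eta\in(0,h^\ast)$ and define a probability measure $\bp_1$ by: $\bp_1=\bq$ on $A\setminus\{x^\ast\}$; $\bp_1(x^\ast)=\bp(x^\ast)+\eta$; and on $A^c$ let $\bp_1$ be obtained from $\bp$ by removing total mass $\sum_{x\in A\setminus\{x^\ast\}}\bh(x)+\eta$ while keeping each $\bp_1(x)\ge\bq(x)$. This is possible because, by Lemma~\ref{lem:sum-a-eq-sum-ac}, the $\bp$‑excess on $A^c$ is $\sum_{x\in A^c}(\bp(x)-\bq(x))=\sum_{x\in A}\bh(x)$, which strictly exceeds $\sum_{x\in A\setminus\{x^\ast\}}\bh(x)+\eta$ since $\bh(x^\ast)=h^\ast>\eta$. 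Then $\bh_{\bp,\bp_1}=\bp_1-\bp$ is strictly positive on the initial segment $A$ and non‑positive on $A^c$, so Lemma~\ref{lem:mass-move-p} applies with $\vlt=\prec$ and $\bp_0=\bp$, yielding $\Pr(R_{\bp,\bq}=0)\ge\Pr(R_{\bp_1,\bq}=0)$. By construction $\bh_{\bp_1,\bq}$ vanishes on $A\setminus\{x^\ast\}$, equals $g\defas h^\ast-\eta$ at $x^\ast$, and is $\le 0$ on $A^c$; hence $A_1\defas\{x:\bq(x)>\bp_1(x)\}=\{x^\ast\}$ and $\bq(x^\ast)=\bp_1(x^\ast)+g=\bp(x^\ast)+h^\ast\ge h^\ast$.

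\textbf{The reduced case.} For the pair $(\bp_1,\bq)$ the computation of $\Pr\{R=0\}$ in the proof of Theorem~\ref{thm:prob-y-x-half} (which uses only independence, the tie‑breaking rule, and $\tbq(x)\defas\sum_{y\prec x}\bq(y)$, hence holds for any fixed order) rearranges to
\[
\Pr(R_{\bp_1,\bq}=0)=\tfrac12-\textstyle\sum_{x}\bh_{\bp_1,\bq}(x)\bigl[\tbq(x)+\tfrac12\bq(x)\bigr].
\]
Since $\bh_{\bp_1,\bq}$ is supported on $\{x^\ast\}\cup A^c$, with $\tbq(x^\ast)=0$ and $\sum_{x\in A^c}\lvert\bh_{\bp_1,\bq}(x)\rvert=g$ (Lemma~\ref{lem:sum-a-eq-sum-ac} applied to $(\bp_1,\bq)$), this equals $\tfrac12+\sum_{x\in A^c}\lvert\bh_{\bp_1,\bq}(x)\rvert\bigl[\tbq(x)+\tfrac12\bq(x)-\tfrac12\bq(x^\ast)\bigr]$. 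Each $x\in A^c$ satisfies $x\succ x^\ast$, so $\tbq(x)\ge\bq(x^\ast)$ and the bracket is $\ge\tfrac12\bq(x^\ast)$; therefore $\Pr(R_{\bp_1,\bq}=0)\ge\tfrac12+\tfrac12\bq(x^\ast)\,g\ge\tfrac12+\tfrac12g^2$, using $\bq(x^\ast)\ge h^\ast\ge g$. Combining with the reduction, $\Pr(R_{\bp,\bq}=0)\ge\tfrac12+\tfrac12(h^\ast-\eta)^2$ for every small $\eta>0$; letting $\eta\to0$ yields \eqref{eq:lower-bound-pAh}.

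\textbf{Main obstacle.} The delicate point is keeping control of $\max\bh$ through the transport. Pushing all of $\bp$'s mass as far $\prec$‑left as possible does minimize $\Pr(R_{\bp,\bq}=0)$, but it collapses the discrepancy at $x^\ast$ and only recovers the trivial bound $\tfrac12$; the transport must instead equalize $\bp$ with $\bq$ off $x^\ast$ while retaining (up to $\eta$) the full gap $h^\ast$ there. The infinitesimal $\eta$ is exactly what makes the gain set of the transport the \emph{initial segment} $A$ rather than the non‑initial set $A\setminus\{x^\ast\}$, which is the hypothesis demanded by Lemma~\ref{lem:mass-move-p}; the $\epsilon$‑discrete reduction plays the complementary role of removing ties so the relevant initial segment and maximizer are well defined. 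Everything else is bookkeeping: checking $\bp_1$ is a probability measure, and verifying the available $\bp$‑excess on $A^c$ suffices via Lemma~\ref{lem:sum-a-eq-sum-ac}.
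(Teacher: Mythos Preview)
Your argument is correct and follows the same overall strategy as the paper: use Lemma~\ref{lem:mass-move-p} to replace $\bp$ by a measure that agrees with $\bq$ off the extremal point $x^\ast$ while preserving (nearly) the full gap $h^\ast$ there, then compute $\Pr(R=0)$ for the reduced pair and compare it to $\Pr(R_{\bq,\bq}=0)=\tfrac12$. The final bound $\tfrac12+\tfrac12\,\bq(x^\ast)\,g$ with $\bq(x^\ast)\ge h^\ast\ge g$ is exactly the paper's endpoint.

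The execution differs in two useful ways. First, where the paper perturbs $\bp$ on all of $A$ by an $\epsilon$-graded shift (and pads $\dom$ so that $|A|=|A^c|$) before a lengthy case analysis conditioning on $Y_\bq$, you perturb only at the single point $x^\ast$ by $\eta$ and work directly with the identity
\[
\Pr(R_{\bp_1,\bq}=0)=\tfrac12-\sum_{x}\bh_{\bp_1,\bq}(x)\bigl[\tbq(x)+\tfrac12\bq(x)\bigr],
\]
which drops out of the formula in the proof of Theorem~\ref{thm:prob-y-x-half} once one substitutes $\bp_1=\bq-\bh_{\bp_1,\bq}$ and recognizes the resulting $\bq$-only terms as $\Pr(R_{\bq,\bq}=0)=\tfrac12$. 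Second, your explanation of the $\eta$-trick is sharper than the paper's $\epsilon$-discrete device: the sole reason for $\eta>0$ is to make the gain set of the transport equal to the initial segment $A$ rather than $A\setminus\{x^\ast\}$, which is precisely the hypothesis of Lemma~\ref{lem:mass-move-p}. Both routes yield the same inequality, but yours avoids the padding of $\dom$, the $\epsilon$-graded construction, and the three-case computation.
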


\begin{proof}
Recall that $A \defas \set{x \in \dom \mid \bq(x) > \bp(x)}$.
First note that by Lemma~\ref{lem:L_infty-continuous-of-R0}, we may assume
without loss of generality that $|A| = |\dom \setminus A|$, by adding elements
of mass arbitrarily close to $0$. Let $k\defas |A|$. Further, by
Lemma~\ref{lem:change-to-e-discrete-pair} we may assume without loss of
generality that $\bp, \bq$ are an $\epsilon$-discrete pair (for some fixed but
small $\epsilon$) with $|\dom| \cdot \epsilon < L_{\infty}(\bp, \bq)$. Let
$(x_0^+, \dots, x_{k-1}^+)$ be the collection $A$ listed in $\prec$-increasing
order.  Let $(x_0^-, \dots, x_{k-1}^-)$ be the collection $\dom \setminus A$
listed in $\prec$-increasing order.

Let $\bp^*$ be any probability measure such that
\begin{equation*}
\bp^*(x) =
\begin{cases}
\bp(x) -e(\ell) & (x = x^-_\ell; e(\ell) \geq 0),\\
\bq(x) - (k-\ell) \cdot \epsilon & (x = x^+_\ell; 0 \leq \ell < k-1),\\
\bp(x) & (x = x^+_{0}).
\end{cases}
\end{equation*}
Note that for all $x, y \in \dom$, we have $y \prec x$ if and only if
$\bh_{\bp^*, \bq}(x) < \bh_{\bp^*, \bq}(y)$.

Now, for every $\ell < k-1$ we have $\bh_{\bp, \bq}(x_\ell^+) \geq \ell \cdot
\epsilon$ (as $\bp, \bq$ are an $\epsilon$-discrete pair), and so we can always
find such a $\bp^*$. In particular the following are immediate.

\begin{itemize}
\item[(a)] $x \prec y$ if and only if $\bh_{\bp^*, \bq}(x) > \bh_{\bp^*, \bq}(y)$,

\item[(b)] $\bh_{\bp, \bq}(x^+_{0}) = \bh_{\bp^*, \bq}(x^+_{0})$,

\item[(c)] if $\bh_{\bp, \bq^*}(x) > 0$ and $\bh_{\bp, \bp^*}(y) \leq 0$ then $x \prec y$, and

\item[(d)] $(\bp, \bq^*)$ is an $\epsilon$-discrete pair.
\end{itemize}

Note that $\Pr(R_{\bp, \bq}   = 0) \geq \Pr(R_{\bp^*, \bq}   = 0)$,
by Lemma~\ref{lem:mass-move-p} and (c).
For simplicity, let $A_0 \defas \{x_{0}^+\}$,
$A_1 \defas \{x_i^+\}_{1 \leq i \leq k-1}$ and $D \defas \dom \setminus A$.

We now condition on the value of $Y_{\bq}$, in order to calculate $\Pr(R_{\bp^*, \bq} = 0)$.

\paragraph{Case 1:} $Y_{\bq} = x_i^-$. We have
\begin{align*}
\Pr(R_{\bp^*, \bq} \,{=}\, 0 {\given} Y_{\bq} \,{=}\, x_i^-)
  \,{=}\, \sum_{i <\ell < k} \bp^*(x_\ell^-) + \frac{1}{2} \bp^*(x_i^-).
\end{align*}

\paragraph{Case 2:} $Y_{\bq} \in A_1$. We have
\begin{align*}
\Pr(R_{\bp^*, \bq} \,{=}\,0 {\given} Y_{\bq} \,{\in}\, A_1)
  \,{=}\, \bp^*(D) + \frac{1}{2} \bp^*(A_1) + f_0(\epsilon),
\end{align*}
where $f_0$ is a function satisfying $\lim_{\epsilon \to 0}f_0(\epsilon) = 0$.

\paragraph{Case 3:} $Y_{\bq} \in A_0$. We have
\begin{align*}
\Pr(R_{\bp^*, \bq}\,{=}\,0 {\given} Y_{\bq} \,{\in}\, A_0)
  \,{=}\,\bp^*(A_1) + \bp^*(D) + \frac{1}{2} \bp^*(A_0).
\end{align*}

We may calculate these terms as follows:
\begin{align*}
\bp^*(D) &= \bq(D) + \bh_{\bp, \bq}(x_{0}^+) + ({k(k-1)}/{2}) \epsilon,\\
\bp^*(A_1) &= \bq(A_1) - ({k(k-1)}/{2}) \epsilon, \\
\bp^*(A_0) &= \bq(A_0) - \bh_{\bp, \bq}(x^+_{0}).
\end{align*}
Putting all of this together, we obtain
\begin{align*}
&\Pr(R_{\bp^*, \bq}   = 0) \\
&\begin{aligned}
&= \sum_{i < k}\sum_{i < \ell < k}\bq(x_i^-) \bp^*(x_\ell^-) + \frac{1}{2} \sum_{i < k}\bq(x_i^-) \bp^*(x_i^-) \\
& \quad + \bq(A_1) \bp^*(D) + \frac{1}{2} \bq(A_1) \bp^*(A_1) + \bq(A_1)  f_0(\epsilon) \\
& \quad + \bq(A_0) \bp^*(A_1) + \bq(A_0) \bp^*(D) + \frac{1}{2} \bq(A_0) \bp^*(A_0) \\
\end{aligned}\\
&\begin{aligned}
&= \sum_{i < k}\sum_{i < \ell < k}\bq(x_i^-) [\bq(x_\ell^-) - \bh_{\bp^*, \bq}(x_\ell^-)]  \\
&\quad + \frac{1}{2} \sum_{i<k}\bq(x_i^-) [\bq(x_i^-) - \bh_{\bp^*, \bq}(x_i^-)] \\
&\quad + \bq(A_1) [\bq(D) + \bh_{\bp, \bq}(x_{0}^+)] + \frac{1}{2} \bq(A_1) \bq(A_1)  \\
&\quad + \bq(A_0) \bq(A_1) + \bq(A_0) [\bq(D) + \bh_{\bp, \bq}(x_{0}^+)] \\
&\quad + \frac{1}{2} \bq(A_0) [\bq(A_0) - \bh_{\bp, \bq}(x^+_{0})] +  f_1(\epsilon) \\
\end{aligned}\\
&\begin{aligned}
&=  \sum_{i < k}\sum_{i < \ell < k}\bq(x_i^-)\bq(x_\ell^-)  + \frac{1}{2} \sum_{i < k}\bq(x_i^-) \bq(x_i^-) \\
&\quad + \bq(A_1) \bq(D) + \frac{1}{2} \bq(A_1) \bq(A_1) + \bq(A_0) \bq(A_1) \\
&\quad + \bq(A_0) \bq(D) + \frac{1}{2} \bq(A_0) \bq(A_0) \\
&\quad - \sum_{i < k}\sum_{i < \ell < k} \bq(x_i^-)\bh_{\bp^*, \bq}(x_\ell^-) \\
&\quad - \frac{1}{2} \sum_{i < k}\bq(x_i^-) \bh_{\bp^*, \bq}(x_i^-) + \bq(A_1) \bh_{\bp, \bq}(x_{0}^+)  \\
&\quad + \bq(A_0)  \bh_{\bp, \bq}(x_{0}^+) - \frac{1}{2} \bq(A_0)  \bh_{\bp, \bq}(x^+_{0}) +  f_1(\epsilon)
\end{aligned} \\
&\begin{aligned}
&= \sum_{i < k}\sum_{i < \ell < k}\bq(x_i^-)\bq(x_\ell^-)  + \frac{1}{2} \sum_{i < k}\bq(x_i^-) \bq(x_i^-) \\
&\quad + \bq(A_1) \bq(D) + \frac{1}{2} \bq(A_1) \bq(A_1) + \bq(A_0) \bq(A_1) \\
&\quad + \bq(A_0) \bq(D) + \frac{1}{2} \bq(A_0) \bq(A_0) \\
&\quad - \sum_{i < k}\sum_{i < \ell < k} \bq(x_i^-)\bh_{\bp^*, \bq}(x_\ell^-) \\
&\quad - \frac{1}{2} \sum_{i < k}\bq(x_i^-) \bh_{\bp^*, \bq}(x_i^-) + \bq(A_1) \bh_{\bp, \bq}(x_{0}^+) \\
&\quad + \frac{1}{2} \bq(A_0)  \bh_{\bp, \bq}(x^+_{0}) +  f_1(\epsilon),
\end{aligned}
\end{align*}
where $f_1$ is a function satisfying $\lim_{\epsilon \to 0} f_1(\epsilon) = 0$.

We also have
\begin{align*}
\frac{1}{2} &=   \Pr(R_{\bq, \bq}   = 0) \\
&\begin{aligned}
&= \sum_{i < k}\sum_{i < \ell < k}\bq(x_i^-)\bq(x_\ell^-) + \frac{1}{2} \sum_{i < k}\bq(x_i^-) \bq(x_i^-) \\
&\quad + \bq(A_1) \bq(D) + \frac{1}{2} \bq(A_1) \bq(A_1)\\
&\quad + \bq(A_0) \bq(A_1) + \bq(A_0) \bq(D) + \frac{1}{2} \bq(A_0) \bq(A_0).
\end{aligned}
\end{align*}

Putting these two equations together, we obtain
\begin{align*}
&\Pr(R_{\bp^*, \bq} = 0) - \frac{1}{2} \\
&= \Pr(R_{\bp^*, \bq}   = 0) - \Pr(R_{\bq, \bq}   = 0) \\
&\begin{aligned}
&= - \sum_{i < k}\sum_{i < \ell < k} \bq(x_i^-) \bh_{\bp^*, \bq}(x_\ell^-) \\
& \quad - \frac{1}{2} \sum_{i < k}\bq(x_i^-)  \bh_{\bp^*, \bq}(x_i^-) + \bq(A_1) \bh_{\bp, \bq}(x_{0}^+) \\
& \quad  + \frac{1}{2} \bq(A_0)   \bh_{\bp, \bq}(x^+_{0}) +  f_1(\epsilon)
\end{aligned}\\
&\geq \frac{1}{2} \bq(A_0)   \bh_{\bp, \bq}(x^+_{0}) +  f_1(\epsilon),
\end{align*}
as $ \bh_{\bp^*, \bq}(x_\ell^-) \leq 0$ for all $\ell < k$ and $ \bh_{\bp, \bq}(x^+_{0}) \leq 0$.

But we know that
\begin{align*}
\bq(A_0) = \bq(x^+_{0}) = \bp^*(x^+_{0}) + \bh_{\bp, \bq}(x^+_{0}) \geq \bh_{\bp, \bq}(x^+_{0}).
\end{align*}
Therefore, as $\bh_{\bp, \bq}(x^+_{0})$ is the maximal value of $\bh_{\bp, \bq}$,
by taking the limit as $\epsilon\to 0$ we obtain
\begin{align*}
\Pr(R_{\bp^*, \bq}   = 0) &\geq  \frac{1}{2} + \frac{1}{2}\max_{x \in \dom} \bh_{\bp, \bq}(x)^2,
\end{align*}
as desired.
\end{proof}

Finally, we arrive at the following theorem.

\vspace*{5pt}
\begin{theorem}
\label{thm:lower-bound-linf}
Given probability measure $\bp, \bq$ on $\dom$ there is a linear ordering
$\sqsubset$ of $\dom$ such that if $X_\bp$ and $Y_\bq$ are sampled independently
from $\bp$ and $\bq$ respectively then
\begin{equation}
\Pr(X_\bq \sqsubset Y_\bp) \geq  \frac{1}{2} + \frac{1}{2}L_{\infty}(\bp, \bq)^2.
\label{eq:lower-bound-linf}
\end{equation}
\end{theorem}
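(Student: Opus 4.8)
The plan is to derive Theorem~\ref{thm:lower-bound-linf} as an almost immediate consequence of Proposition~\ref{prop:lower-bound-pAh}, modulo two bookkeeping points. The first is interpretive: the ordering $\sqsubset$ in the statement is to be read as a tie-broken (randomized) order — the lexicographic augmentation of a strict total order on $\dom$ by independent uniforms on $[0,1]$, exactly as used in Corollary~\ref{corr:total-order-no-symmetry-appx} — so that $X_\bq$ and $Y_\bp$ are never tied and, in the $m=1$ case, $\Pr(X_\bq \sqsubset Y_\bp) = \Pr(R_{\bp,\bq} = 0)$; recall that $R_{\bp,\bq} = 0$ is exactly the event that the $\bq$-sample (the ranked element) precedes the single $\bp$-sample, with ties resolved by the uniforms. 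A literally strict order on $\dom$ alone would not suffice, since for $\bp = \bq = \delta_x$ one gets $\Pr(X \sqsubset Y) = 0 < 1/2$.

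The second point is a symmetry reduction. Since $L_\infty(\bp,\bq) = \max\bigl\{\,\max_{x\in\dom}(\bq(x) - \bp(x)),\ \max_{x\in\dom}(\bp(x) - \bq(x))\,\bigr\}$, at least one of the two one-sided maxima equals $L_\infty(\bp,\bq)$. If $\max_{x\in\dom}\bh_{\bp,\bq}(x) = \max_{x\in\dom}(\bq(x)-\bp(x)) = L_\infty(\bp,\bq)$, I would take $\sqsubset$ to be the uniform-augmented version of the ordering $\prec$ defined earlier from the sign of $\bh_{\bp,\bq}$ and invoke Proposition~\ref{prop:lower-bound-pAh}:
\[
\Pr(X_\bq \sqsubset Y_\bp) \;=\; \Pr(R_{\bp,\bq} = 0) \;\ge\; \tfrac12 + \tfrac12\max_{x\in\dom}\bh_{\bp,\bq}(x)^2 \;=\; \tfrac12 + \tfrac12\,L_\infty(\bp,\bq)^2 .
\]
Otherwise $L_\infty(\bp,\bq)$ is attained by $\bp(x)-\bq(x)$ for some $x$; then I would run the same construction with the roles of $\bp$ and $\bq$ interchanged to produce an order $\sqsubset'$ with $\Pr(Y_\bp \sqsubset' X_\bq) \ge \tfrac12 + \tfrac12 L_\infty(\bp,\bq)^2$, and set $\sqsubset$ to be (the uniform-augmented version of) the reversal of $\sqsubset'$, for which $\Pr(X_\bq \sqsubset Y_\bp) = \Pr(Y_\bp \sqsubset' X_\bq)$. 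Either way the required ordering exists. (One may note, if desired, that in both cases $\sqsubset$ can be taken to be a uniform-augmented ordering built from the sign of $\bh_{\bp,\bq}$.)

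Essentially all of the work has already been done: Proposition~\ref{prop:lower-bound-pAh} — together with the mass-transport Lemma~\ref{lem:mass-move-p} and the perturbation/continuity Lemma~\ref{lem:L_infty-continuous-of-R0} it relies on — is the substantive content. I do not expect a genuine obstacle here; the one place care is needed is the translation between ``$\Pr(X_\bq \sqsubset Y_\bp)$'' as written and the quantity $\Pr(R_{\bp,\bq}=0)$ that Proposition~\ref{prop:lower-bound-pAh} actually bounds — one must be explicit that $\sqsubset$ carries the tie-breaking uniforms, and must keep track of which of the two samples plays the role of the ranked element so that the inequality is oriented toward $\ge 1/2$. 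The $\bp\leftrightarrow\bq$ symmetry step is precisely what promotes the one-sided quantity $\max_x(\bq(x)-\bp(x))^2$ appearing in Proposition~\ref{prop:lower-bound-pAh} to the two-sided $L_\infty(\bp,\bq)^2$.
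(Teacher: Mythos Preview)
Your proposal is correct and follows essentially the same approach as the paper: split into two cases according to which of $\max_x(\bq(x)-\bp(x))$ or $\max_x(\bp(x)-\bq(x))$ realizes $L_\infty(\bp,\bq)$, and in each case invoke Proposition~\ref{prop:lower-bound-pAh} (possibly with the roles of $\bp$ and $\bq$ swapped). Your added care in making explicit the tie-breaking interpretation of $\sqsubset$ and the reversal in the second case is a detail the paper glosses over, but the underlying argument is the same.
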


\begin{proof}
Note that
\begin{equation*}
L_{\infty}(\bp, \bq) = \max\{ \max_{x \in \dom}\bh_{\bp, \bq}(x),\, \max_{x \in \dom} \bh_{\bq, \bp}(x)\}.
\end{equation*}

If $L_{\infty}(\bp, \bq) = \max_{x \in \dom}\bh_{\bp, \bq}(x)$, then the theorem
follows from Proposition~\ref{prop:lower-bound-pAh} using the ordering
$x \sqsubset y$ if and only if $\bh_{\bp, \bq}(x) > \bh_{\bp, \bq}(y)$.

If, however, $L_{\infty}(\bp, \bq) = \max_{x \in \dom}\bh_{\bq, \bp}(x)$, then
the theorem follows from Proposition~\ref{prop:lower-bound-pAh} by interchanging
$\bp$ and $\bq$, i.e., by using the ordering $x \sqsubset y$ if and only if
$\bh_{\bq, \bp}(x) > \bh_{\bq, \bp}(y)$.
\end{proof}

%%%%%%%%%%%%%% %%%%%%%%%%%%%% %%%%%%%%%%%%%% %%%%%%%%%%%%%%
\subsection{Sample complexity}
%%%%%%%%%%%%%% %%%%%%%%%%%%%% %%%%%%%%%%%%%% %%%%%%%%%%%%%%

We now show how to amplify this result by repeated trials to obtain a
bound on the sample complexity of the main algorithm for determining
whether $\bp = \bq$.

Let $\sqsubset$ be the linear ordering defined in
Theorem~\ref{thm:lower-bound-linf}.

\begin{theorem}[Theorem~\ref{thm:sample-complexity} in the main text]
\label{thm:sample-complexity-appx}
Given significance level $\alpha = 2\Phi(-c)$ for $c > 0$,
the proposed test with ordering $\sqsubset$ and $m=1$ achieves power
$\beta \ge 1 - \Phi(-c)$ using
\begin{equation}
n \approx 4c^2/L_\infty(\bp, \bq)^4
\label{eq:sample-complexity-appx}
\end{equation}
samples from $\bq$, where $\Phi$ is the cumulative distribution function of a
standard normal.
\end{theorem}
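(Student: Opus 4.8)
The plan is to reduce the problem to a one-dimensional Bernoulli test and then invoke a normal approximation. When $m = 1$ each stochastic rank $r_i$ takes values in $\set{0,1}$, so testing whether $r_1,\dots,r_n$ are discrete uniform on $[m+1]=\set{0,1}$ is exactly testing whether the common success probability $p \defas \Prob{r_i = 0}$ equals $1/2$. Under $\Hnull$ we have $\bp = \bq$ and hence $p = 1/2$ by Theorem~\ref{thm:uniformity-appx}. Under $\Halt$, Theorem~\ref{thm:lower-bound-linf} applied with the ordering $\sqsubset$ gives $p \ge 1/2 + \tfrac12 L_\infty(\bp,\bq)^2$; writing $\eta \defas p - 1/2$, we thus have $\eta \ge \tfrac12 L_\infty(\bp,\bq)^2$ under the alternative.

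Next I would make the test concrete: let $N_0 \defas \sum_{i=1}^n \indicatornm{r_i = 0} \sim \mathsf{Binomial}(n,p)$ and reject when the standardized count $Z \defas (N_0 - n/2)/(\sqrt{n}/2)$ satisfies $\abs{Z} \ge c$. Under $\Hnull$, $N_0 \sim \mathsf{Binomial}(n,1/2)$ is approximately $\mathcal N(n/2, n/4)$ for large $n$, so $\Prob{\abs{Z} \ge c \mid \Hnull} \approx 2\Phi(-c) = \alpha$; this is precisely the $p$-value rejection rule of line~\ref{algline:gof-uniform-pval} instantiated with the normal-approximation binomial test for a uniform null.

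For the power I would bound $\beta$ below by the probability of rejecting in the upper tail alone. Under $\Halt$, $p = 1/2 + \eta$ and $p(1-p) \le 1/4$, so
\begin{align*}
\beta
  &\ge \Prob{N_0 - n/2 \ge (c/2)\sqrt{n}} \\
  &= \Prob{\frac{N_0 - np}{\sqrt{np(1-p)}} \ge \frac{(c/2)\sqrt{n} - n\eta}{\sqrt{np(1-p)}}}.
\end{align*}
Taking $n \approx 4c^2/L_\infty(\bp,\bq)^4$, which exceeds $c^2/\eta^2$ since $\eta \ge \tfrac12 L_\infty(\bp,\bq)^2$, yields $\sqrt{n}\,\eta \ge c$ and hence $(c/2)\sqrt{n} - n\eta \le -(c/2)\sqrt{n}$; combined with $\sqrt{np(1-p)} \le \sqrt{n}/2$ this shows the right-hand threshold is at most $-c$. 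By the normal approximation to $\mathsf{Binomial}(n,p)$, the probability on the right is then at least $\Phi(c) = 1 - \Phi(-c)$, giving $\beta \ge 1 - \Phi(-c)$; the slack from $p(1-p) < 1/4$, from $\eta > \tfrac12 L_\infty(\bp,\bq)^2$, and from ignoring the lower tail only increases the power.

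The main obstacle is that the whole argument lives at the level of the central limit / normal approximation, so the displayed (in)equalities hold only up to $o(1)$ corrections in $n$ — which is why the statement asserts $n \approx 4c^2/L_\infty(\bp,\bq)^4$ rather than a sharp bound. Turning this into a rigorous finite-$n$ statement would require quantifying the approximation, e.g.\ via a Berry--Esseen estimate for the Binomial CDF to control the realized level and the realized power simultaneously; that is the step I expect to be most delicate. It is, however, self-consistent with the regime of interest, since small $L_\infty(\bp,\bq)$ forces $n$ to be large, which is precisely when the normal approximation is accurate.
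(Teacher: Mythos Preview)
Your proposal is correct and follows essentially the same argument as the paper: reduce to a Bernoulli($p$) test via $m=1$, use Theorem~\ref{thm:lower-bound-linf} to get $p \ge 1/2 + \tfrac12 L_\infty(\bp,\bq)^2$ under $\Halt$, set up a two-sided normal-approximation test at level $\alpha = 2\Phi(-c)$, and then bound the power from below by the upper tail using $p(1-p)\le 1/4$. The paper's version plugs in the worst-case mean $1/2 + L_\infty^2/2$ and computes the standardized critical value exactly as $-c$, whereas you carry inequalities throughout; both land on $\beta \ge 1-\Phi(-c)$, and your explicit remarks about the CLT-level nature of the bound and a possible Berry--Esseen sharpening are consistent with the paper's use of ``$\approx$''.
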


\begin{proof}
Assume without loss of generality that the order $\sqsubset$ from
Theorem~\ref{thm:lower-bound-linf} is such that
$L_\infty = \max_{s \in \dom}(\bq(x) - \bp(x))$.
Let $(Y_1,\dots,Y_n) \simiid \bq$ be the $n$ samples from $\bq$. With $m=1$, the
testing procedure generates $n$ samples $(X_1,\dots,X_n) \simiid \bp$, and $2n$
uniform random variables $(U^Y_{1}, \dots, U^Y_{n}, U^X_{1}, \dots, U^X_{n})
\simiid \mathsf{Uniform}(0,1)$ to break ties.
Let $\vlt$ denote the lexicographic order on $\dom \times [0,1]$ induced
by $(\dom, \vlt)$ and $([0,1], <)$.
Define $W_i \defas \indicator{(Y_i, U^Y_i) \vlt (X_i, U^X_i)}$, for $1
\le i \le n$, to be the rank of the $i$-{th} observation from
$\bq$.

Under the null hypothesis $\Hnull$, each rank $W_i$ has
distribution $\mathsf{Bernoulli}(1/2)$ by Lemma~\ref{lem:uniformity-rank-statistics}.
Testing for uniformity of the ranks on $\set{0, 1}$ is equivalent to testing
whether a coin is unbiased given the i.i.d.\ flips $\set{W_1,\dots,W_n}$. Let
$\hat{B} \defas \sum_{i=1}^{n} (1-W_i)/ n$ denote the empirical proportion of
zeros.
By the central limit theorem, for sufficiently large $n$, we have that
$\hat{B}$ is approximately normally distributed with mean $1/2$ and standard
deviation $1/(2\sqrt{n})$.
For the given significance level $\alpha = 2\Phi(-c)$, we form the two-sided
$\reject$ region $F = (-\infty, \gamma) \cup (\gamma, \infty)$,
where the critical value $\gamma$ satisfies
\begin{align}
\label{eq:critical-value}
c = \frac{\gamma - 1/2}{1/(2\sqrt{n})} = 2\sqrt{n}(\gamma - 1/2).
\end{align}
Replacing $n$ in Eq.~\eqref{eq:sample-complexity}, we obtain
\begin{align}
\gamma
  &= 1/2 + c/(2\sqrt{n}) \notag \\
  &= 1/2 + c/(2(2c/L_{\infty}(\bp, \bq)^2)) \notag \\
  &= 1/2 + L_{\infty}(\bp,\bq)^2/4.
  \label{eq:gamma-in-terms-of-c}
\end{align}
This construction ensures that $\Prob{ \reject \mid \Hnull} = \alpha$.

We now show that the test with this rejection region has power $\beta \ge
\Prob{\reject \mid \Halt} = 1 - \Phi(-c)$.
Under the alternative hypothesis $\Halt$, each $W_i$ has (in the worst case)
distribution $\mathsf{Bernoulli}(1/2+L_{\infty}(\bp,\bq)^2/2)$ by
Theorem~\ref{thm:lower-bound-linf}, so that the empirical proportion $\hat{B}$
is approximately normally distributed with mean at least
$1/2+L_{\infty}(\bp,\bq)^2/2$ and standard deviation at most $1/(2\sqrt{n})$.
Under the alternative distribution of $\hat{B}$, the standard score $c'$ of the
critical value $\gamma$ is
\begin{align}
c'
  &= \frac{\gamma - (1/2+L_{\infty}(\bp,\bq)^2/2)}{1/(2\sqrt{n})} \notag \\
  &= 2\sqrt{n}((1/2 + L_{\infty}(\bp,\bq)^2/4) - (1/2+L_{\infty}(\bp,\bq)^2/2)) \notag \\
  &= -2\sqrt{n}(L_{\infty}(\bp,\bq)^2/4) \notag \\
  &= -\sqrt{n}  L_{\infty}(\bp,\bq)^2/2 \notag \\
  &= -c \label{eq:bhat-alt-critical-value},
\end{align}
where the second equality follows from Eq.~\eqref{eq:gamma-in-terms-of-c}.
Observe that the $\notreject$ region
$F^c = [-\gamma, \gamma] \subset (-\infty, \gamma]$,
and so the probability that $\hat{B}$ falls in $F^c$ is at most the probability
that $\hat{B} < \gamma$, which by Eq.~\eqref{eq:bhat-alt-critical-value} is
equal to $\Phi(-c)$.
It is then immediate that $\beta \ge 1 - \Phi(-c)$.
\end{proof}

The following corollary follows directly from
Theorem~\ref{thm:sample-complexity}.

\vspace*{5pt}
\begin{corollary}
\label{corr:sample-complexity-error}
As the significance level $\alpha$ varies, the proposed test
with ordering $\sqsubset$ and $m=1$
achieves an overall error $(\alpha +
(1-\beta))/2 \le 3\Phi(-c)/2$ using $n = 4c^2/L_\infty(\bp, \bq)^4$
samples.
\end{corollary}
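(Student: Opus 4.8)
The plan is to read this off directly from Theorem~\ref{thm:sample-complexity-appx}, which already expresses both the type~I and the type~II error rates of the $m=1$ test with ordering $\sqsubset$ as functions of the single free parameter $c>0$. So the only work is to average the two error probabilities and to observe that letting $c$ range over $(0,\infty)$ is the same as letting the significance level $\alpha$ range over all of $(0,1)$.

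Concretely, I would fix $c>0$, set $\alpha=2\Phi(-c)$, and take $n\approx 4c^2/L_\infty(\bp,\bq)^4$ samples, exactly as in Theorem~\ref{thm:sample-complexity-appx}. That theorem gives, for this choice, significance level exactly $\alpha=2\Phi(-c)$ (by construction of the rejection region) and power $\beta\ge 1-\Phi(-c)$, hence type~II error $1-\beta\le\Phi(-c)$. Averaging the two error probabilities then yields
\[
\frac{\alpha+(1-\beta)}{2}\;\le\;\frac{2\Phi(-c)+\Phi(-c)}{2}\;=\;\frac{3\Phi(-c)}{2},
\]
which is the claimed bound. Since $c\mapsto 2\Phi(-c)$ is a strictly decreasing bijection from $(0,\infty)$ onto $(0,1)$, the inequality holds for every admissible $\alpha$, which is what ``as the significance level $\alpha$ varies'' encodes.

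There is no genuine obstacle here; the corollary is a bookkeeping step on top of Theorem~\ref{thm:sample-complexity-appx}. The only points requiring a moment's care are: (i) the sample count is the \emph{same} $n\approx 4c^2/L_\infty(\bp,\bq)^4$ that appears in that theorem, so the $\approx$ is inherited verbatim rather than introduced anew; and (ii) the numbers being averaged are the worst-case guarantees furnished by the theorem --- type~I error equal to $\alpha$ by design, and type~II error at most $\Phi(-c)$ via the worst-case $\mathsf{Bernoulli}(1/2+L_\infty(\bp,\bq)^2/2)$ lower bound of Theorem~\ref{thm:lower-bound-linf} --- so $(\alpha+(1-\beta))/2$ is indeed bounded above by $3\Phi(-c)/2$.
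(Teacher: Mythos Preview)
Your proposal is correct and matches the paper's approach exactly: the paper states that the corollary ``follows directly from Theorem~\ref{thm:sample-complexity}'' and gives no further argument, and your derivation---substituting $\alpha=2\Phi(-c)$ and $1-\beta\le\Phi(-c)$ into the average---is precisely the intended one-line computation.
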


%%%%%%%%%%%%%% %%%%%%%%%%%%%% %%%%%%%%%%%%%% %%%%%%%%%%%%%%
\subsection{Distribution of the test statistic under the alternative hypothesis}
\label{subsec:distribution-alternative}
%%%%%%%%%%%%%% %%%%%%%%%%%%%% %%%%%%%%%%%%%% %%%%%%%%%%%%%%

In this subsection we derive the distribution of $R$ under the alternative hypothesis
$\bp \ne \bq$.
As before, write $\tbp(x) \defas \sum_{x' < x}\bp(x)$.

\vspace*{5pt}
\begin{theorem}
\label{thm:dist-r0-combinatorial-appx}
The distribution of $R$ is given by
\begin{equation}
\Prob{R = r} = \sum_{x \in \dom}H(x, m, r) \,\bq(x)
\label{eq:dist-r0-combinatorial-appx}
\end{equation}
for $0 \le r \le m$,
where $H(x,m,r) \defas$
\begin{align*}
~~~\begin{cases}
  \displaystyle\binompdf{r}{m}{\tbp(x)}
    & (\bp(x) = 0) \\[10pt]
  \displaystyle\frac{1}{m+1}
    & (\bp(x) = 1) \\[10pt]
  \displaystyle\sum_{e=0}^{m} \Bigg\{
    \Bigg[ \sum_{j=0}^{e}
    % Can't use binompdf here, results in a bug since we need parenthesis
    % around (m-e)-(r-j) so just write the full equation.
    % \binompdf{r{-}j}{m{-}e}{\frac{\tbp(x)}{1-\bp(x)}} \notag \\
    \displaystyle\binom{m-e}{r-j}
      \left[ \frac{\tbp(x)}{1-\bp(x)} \right]^{r-j}
      \span \notag \\
    \quad \displaystyle\left[ 1-\frac{\tbp(x)}{1-\bp(x)} \right]^{(m-e)-(r-j)}
      \left(\frac{1}{e+1}\right) \Bigg] \span \notag \\
    \quad \displaystyle\binompdf{m}{e}{\bp(x)} \Bigg\}
    & (0 < \bp(x) < 1)
  \end{cases}
\end{align*}
\end{theorem}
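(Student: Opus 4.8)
The plan is to condition on the value of $X_0$. Since $X_0\sim\bq$ is independent of $(X_1,\dots,X_m,U_0,\dots,U_m)$, setting $H(x,m,r)\defas\Prob{R=r\mid X_0=x}$ immediately gives
\[
\Prob{R=r}=\sum_{x\in\dom}\Prob{X_0=x}\,\Prob{R=r\mid X_0=x}=\sum_{x\in\dom}H(x,m,r)\,\bq(x),
\]
and this sum converges absolutely since $0\le H(x,m,r)\le 1$ and $\sum_x\bq(x)=1$, so the identity makes sense even when $\dom$ is countably infinite. It therefore suffices to compute $\Prob{R=r\mid X_0=x}$ and match it with the stated three-case expression. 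Fix $x$. Conditionally on $X_0=x$, each $X_j$ ($1\le j\le m$) independently contributes $1$ to $R$ if $X_j\prec x$ (probability $\tbp(x)$), contributes $0$ if $X_j\succ x$ (probability $1-\tbp(x)-\bp(x)\ge 0$, since $\tbp(x)+\bp(x)=\sum_{x'\preceq x}\bp(x')\le 1$), and contributes $\indicator{U_j<U_0}$ if $X_j=x$ (probability $\bp(x)$).

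For the generic case $0<\bp(x)<1$ I would condition further on the number $E$ of ``tie'' indices $\set{j:X_j=x}$, which is $\mathrm{Binomial}(m,\bp(x))$. Given $E=e$: (i) by exchangeability the remaining $m-e$ draws are conditionally i.i.d.\ among $\set{y:y\ne x}$, each lying $\prec x$ with probability $\tbp(x)/(1-\bp(x))$ (valid since $\tbp(x)\le 1-\bp(x)$), so the number $N$ of $1$'s they contribute is $\mathrm{Binomial}\bigl(m-e,\ \tbp(x)/(1-\bp(x))\bigr)$; and (ii) the ties contribute $K\defas\sum_{j\text{ tie}}\indicator{U_j<U_0}$, which conditional on $U_0=u$ is $\mathrm{Binomial}(e,u)$, hence
\[
\Prob{K=j}=\int_0^1\binom{e}{j}u^{j}(1-u)^{e-j}\,\diff u=\frac{1}{e+1}\qquad(0\le j\le e)
\]
by the Beta integral; that is, $K$ is \emph{uniform} on $\set{0,\dots,e}$. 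Since $N$ depends only on the non-tie draws while $K$ depends only on $(U_j)_{j\text{ tie}}$ and $U_0$, we have $N\perp K$ given $E=e$, and $R=N+K$. Convolving these two distributions, and then averaging over $E\sim\mathrm{Binomial}(m,\bp(x))$ with the usual convention $\binom{m-e}{r-j}=0$ for $r-j\notin\set{0,\dots,m-e}$, reproduces the third branch of the displayed formula for $H$.

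Finally I would dispatch the degenerate cases. If $\bp(x)=0$ there are no ties almost surely, so $R\mid X_0=x$ is a sum of $m$ independent $\mathrm{Bernoulli}(\tbp(x))$ indicators, i.e.\ $\mathrm{Binomial}(m,\tbp(x))$, matching the first branch. If $\bp(x)=1$ then $X_j=x$ for all $j$ almost surely, so $R\mid X_0=x=K$ with $e=m$, which by the same Beta-integral computation is uniform on $\set{0,\dots,m}$, matching the second branch. Assembling the three cases into $H$ and substituting into the sum over $x$ gives the theorem. The one genuinely non-routine ingredient is the identity $\int_0^1\binom{e}{j}u^j(1-u)^{e-j}\,\diff u=1/(e+1)$, which is exactly what makes the uniform tie-breaking behave correctly (cf.\ Example~\ref{example:uniform-bernoulli-tie-break}); the remaining work is bookkeeping — justifying the conditional-independence split into the ``$\prec x$ count'' and the ``tie-break count'' and tracking index ranges through the convolution.
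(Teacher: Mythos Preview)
Your proposal is correct and follows essentially the same route as the paper: condition on $X_0=x$, split into the three cases $\bp(x)=0$, $\bp(x)=1$, $0<\bp(x)<1$, and in the generic case condition on the number of ties $E$, write $R$ as the sum of a binomial ``less-than'' count and a uniform tie-break count, and convolve. The only cosmetic difference is that the paper establishes uniformity of the tie-break rank by invoking the exchangeability argument of Lemma~\ref{lem:uniformity-rank-statistics}, whereas you do it directly via the Beta integral $\int_0^1\binom{e}{j}u^j(1-u)^{e-j}\,\diff u=1/(e+1)$; both are valid justifications of the same fact.
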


\begin{proof}
Define the following random variables:
\begin{align}
L &\defas \sum_{i=1}^{m} \mathbb{I}\left[X_i \plt X_0 \right], \\
E &\defas \sum_{i=1}^{m} \mathbb{I}\left[X_i = X_0 \right], \\
G &\defas \sum_{i=1}^{m} \mathbb{I}\left[X_i \pgt X_0 \right].
\end{align}
We refer to $L$, $E$, and $G$ as ``bins'', where $L$ is the ``less than'' bin,
$E$ is the ``equal to'' bin, and $G$ is the ``greater than'' bin (all with
respect to $X_0$). Total probability gives
\begin{align*}
\Prob{R = r}
  &= \sum_{x \in \dom}\Prob{R = r , X_0 = x} \\
  &= \sum_{\substack{x \in \dom \\ \bq(x) > 0}}\Prob{R = r \given X_0 = x} \bq(x).
\end{align*}

Fix $x \in \dom$ such that $\bq(x) > 0$. Consider $\Prob{R = r \given X_0 = s}$.
The counts in bins $L$, $E$, and $G$ are binomial random variables with $m$
trials, where the bin $L$ has success probability $\tbp(x)$, the bin $E$ has
success probability $\bp(x)$, and the bin $G$ has success probability
$1-(\tbp(x) + \bp(x))$.
We now consider three cases.

\paragraph{Case 1:} $\bp(x) = 0$. The event $\set{E=0}$ occurs with
probability one since each $X_i$, for $1 \le i \le m$, cannot possibly be equal
to $x$. Therefore, conditioned on $\set{X_0 = x}$, the event $\set{R = r}$
occurs if and only if $\set{L = r}$. Since $L$ is binomially distributed,
\begin{align*}
\Prob{R = r \given X_0 = x}
  &= \Prob{L=r \given X_0 = x} \\
% &= \mathsf{Binom}(r; N=m, p=\tbp(x)) \\
  &= \binom{m}{r}\left[\tbp(x)\right]^r
    \left[1-\tbp(x)\right]^{m-r}.
\end{align*}

\paragraph{Case 2:} $\bp(x) = 1$. Then the event $\set{E=m}$ occurs with
probability one since each $X_i$, for $1 \le i \le m$, can only equal $s$. The
uniform numbers $U_0, \dots, U_m$ used to break the ties will determine the rank
$R$ of $X_0$. Let $B$ be the rank of $U_0$ among the $m$ other uniform random
variables $U_1, \dots, U_m$. The event $\set{R = r}$ occurs if and only if
$\set{B = r}$. Since the $U_i$ are i.i.d., $B$ is uniformly distributed over
$\set{0,1,2,\dots, m}$ by Lemma~\ref{lem:uniformity-rank-statistics}. Hence
\begin{align*}
\Prob{R = r \given X_0 = x}
  = \Prob{B=r \given X_0 = x}
  = \frac{1}{m+1}.
\end{align*}

\paragraph{Case 3:} $0 < \bp(x) < 1$. By total probability,
\begin{align*}
&\Prob{R = r \given X_0 = x} \\
&= \sum_{e=0}^{m} \Prob{R = r \given X_0 = x, E=e} \Prob{E=e \given X_0 = x}.
\end{align*}
Since $E$ is binomially distributed,
\begin{align*}
\Prob{E=e \given X_0 = x}
% &= \mathsf{Binom}(e; N=m, p=\bp(x)) \\
&= \binom{m}{e}\left[\bp(x)\right]^e \left[1-\bp(x)\right]^{m-e}.
\end{align*}

We now tackle the event $\set{R = r \given X_0 = x, E=e}$. The uniform numbers
$U_0, \dots, U_m$ used to break the ties will determine the rank $R$ of $X_0$.
Define $B$ to be the rank of $U_0$ among the $e$ other uniform random variables
assigned to bin $E$, i.e., those
$U_i$ for $1 \le i \le m$ such that $X_i = s$.  The random
variable $B$ is independent of all the $X_i$, but is dependent on $E$. Given
$\set{E=e}$, $B$ is uniformly distributed on $\set{0,1,\dots,e}$. By total
probability,
\begin{align*}
&\Prob{R = r \given X_0 = x, E=e}\\
&\begin{aligned}
&= \sum_{b=0}^{e} \big[ \Prob{R = r \given X_0 = x, E=e, B=b} \\
&\qquad \qquad \qquad \Prob{B=b \given E = e} \big] \\
\end{aligned}\\
&= \sum_{b=0}^{e} \Prob{R = r \given X_0 = x, E=e, B=b} \frac{1}{e+1}.
\end{align*}

Conditioned on $\set{E=e}$ and $\set{B=0}$, the event $\set{R = r}$ occurs if
and only if $\set{L = r}$, since exactly $0$ random variables in bin $E$ ``are
less'' than $X_0$, so exactly $r$ random variables in bin $L$ are needed to
ensure that the rank of $X_0$ is $r$. By the same reasoning, for $0 \le b \le e$,
conditioned on
$\set{E=e, B=b}$ we have $\set{R = r}$ if and only if $\set{L = r - b}$.

Now, conditioned on $\set{E=e}$, there are $m-e$ remaining assignments to be
split among bins $L$ and $G$. Let $i$ be such that $X_i \ne x$. Then the
relative probability that $X_i$ is assigned to bin $L$ is $\tbp(x)$ and to
bin $G$ is $1-(\tbp(x) + \bp(x))$. Renormalizing these probabilities, we
conclude that $L$ is conditionally (given $\set{E=e}$) a binomial random
variable with $m-e$ trials and success probability
$\tbp(x) / (\tbp(x) + (1-(\tbp(x) + \bp(x))))
= \tbp(x) / (1-\bp(x))$.
Hence
\begin{align*}
&\Prob{R = r \given X_0 = x, E=e, B=b} \\
&= \Prob{L=r-b \given X_0 = x, E=e}\\
&= \binom{m-e}{r-j}
      \left[ \frac{\tbp(x)}{1{-}\bp(x)} \right]^{r-j}
      \left[ 1{-}\frac{\tbp(x)}{1{-}\bp(x)} \right]^{(m-e)-(r-j)},
\end{align*}
completing the proof.
\end{proof}

\vspace*{5pt}
\begin{remark}
The sum in Eq.~\eqref{eq:dist-r0-combinatorial-appx} of
Theorem~\ref{thm:dist-r0-combinatorial-appx} converges since $H(x,m,r) \le 1$.
\end{remark}

\vspace*{5pt}
\begin{remark}
Theorem~\ref{thm:dist-r0-combinatorial-appx} shows that it is not the case that
we must have $\bp = \bq$ whenever there exists some $m$ for which the rank
$R$ is uniform on $[m+1]$.  For
example, let $m=1$, let $\dom \defas \set{0, 1, 2, 3}$, let $\prec$ be the usual
order $<$ on $\dom$, and let
$\bp \defas \frac12\delta_0 + \frac12\delta_3$ and
$\bq \defas \frac12\delta_1 + \frac12\delta_2$.
Let $X \sim \bp$ and $Y \sim \bq$. Then we have
$\Prob{R = 0} = \Prob{X > Y} = 1/2 = \Prob{Y < X} = \Prob{R = 1}$.

Rather, Theorem~\ref{thm:uniformity-appx} tells us merely if $R$ is not uniform
on $\set{0,\dots,m}$ for \emph{some} $m$, then $\bp \ne \bq$. In the example
given above, $m=2$ (and so by Theorem~\ref{thm:p-ne-q-finitely-many-m-appx} all
$m \ge 2$) provides such a witness.
\end{remark}

\end{document}